\pdfoutput=1 

\documentclass[11pt]{amsart} 

\usepackage{amssymb,amsthm,enumitem,colonequals,tikz-cd,microtype}
\usepackage[normalem]{ulem} 

\usepackage[osf]{Baskervaldx}
\usepackage[baskervaldx]{newtxmath}
\usepackage[cal=boondoxo]{mathalfa} 

\usepackage[top=3.75cm, bottom=3cm, left=3.5cm, right=3.5cm]{geometry}

\usepackage{xcolor}
\colorlet{darkblue}{blue!55!black}
\colorlet{darkcyan}{cyan!50!black}
\colorlet{darkgreen}{green!60!black}

\PassOptionsToPackage{hyphens}{url}
\usepackage{hyperref}
\hypersetup{
    colorlinks=true,
    linkcolor=darkblue,
    urlcolor=darkcyan,
    citecolor=darkgreen,
}

\def\eqref#1{\textcolor{darkblue}{(\ref{#1})}}

\usepackage[nameinlink]{cleveref} 
\Crefformat{section}{#2\S#1#3}
\Crefmultiformat{section}{#2\S\S#1#3}{ and~#2#1#3}{, #2#1#3}{, and~#2#1#3}

\usepackage[pagewise]{lineno}
\let\oldequation\equation
\let\oldendequation\endequation
\renewenvironment{equation}{\linenomathNonumbers\oldequation}{\oldendequation\endlinenomath}
\expandafter\let\expandafter\oldequationstar\csname equation*\endcsname
\expandafter\let\expandafter\oldendequationstar\csname endequation*\endcsname
\renewenvironment{equation*}{\linenomathNonumbers\oldequationstar}{\oldendequationstar\endlinenomath}
\let\oldalign\align
\let\oldendalign\endalign
\renewenvironment{align}{\linenomathNonumbers\oldalign}{\oldendalign\endlinenomath}
\expandafter\let\expandafter\oldalignstar\csname align*\endcsname
\expandafter\let\expandafter\oldendalignstar\csname endalign*\endcsname
\renewenvironment{align*}{\linenomathNonumbers\oldalignstar}{\oldendalignstar\endlinenomath}

\theoremstyle{plain}
\newtheorem{theorem}{Theorem}[section]
\newtheorem{lemma}[theorem]{Lemma}
\newtheorem{corollary}[theorem]{Corollary}
\newtheorem{proposition}[theorem]{Proposition}

\theoremstyle{definition}
\newtheorem{definition}[theorem]{Definition}

\newtheorem{remark}[theorem]{Remark}

\newtheorem*{ack}{Acknowledgments}

\AddToHook{env/conjecture/begin}{\crefalias{theorem}{conjecture}}
\AddToHook{env/lemma/begin}{\crefalias{theorem}{lemma}}
\AddToHook{env/corollary/begin}{\crefalias{theorem}{corollary}}
\AddToHook{env/proposition/begin}{\crefalias{theorem}{proposition}}
\AddToHook{env/definition/begin}{\crefalias{theorem}{definition}}
\AddToHook{env/remark/begin}{\crefalias{theorem}{remark}}
\AddToHook{env/setup/begin}{\crefalias{theorem}{setup}}
\AddToHook{env/example/begin}{\crefalias{theorem}{example}}
\AddToHook{env/pathology/begin}{\crefalias{theorem}{pathology}}
\AddToHook{env/conjecture/begin}{\crefalias{theorem}{conjecture}}

\numberwithin{equation}{section}
\numberwithin{theorem}{section}

\title[Fppf descent and localizing subcategories for algebraic spaces]{Fppf descent and localizing subcategories for algebraic spaces}

\author[P.~Lank]{Pat Lank}
\address{P.~Lank,
Dipartimento di Matematica “F. Enriques”, Universit\`{a} degli Studi di Milano, Via Cesare
Saldini 50, 20133 Milano, Italy}
\email{plankmathematics@gmail.com}

\date{\today}

\keywords{derived categories, algebraic spaces, localizing subcategories, tensor triangulated categories, point-generated}

\subjclass[2020]{14A30 (primary), 14A20, 13A35} 

\begin{document}
    
\begin{abstract}
    We prove descendability for fppf covers of quasi-compact quasi-separated algebraic spaces. 
    Consequently, we show that point-generatedness is detectable along fppf covers. 
    Moreover, we classify $\otimes$-localizing subcategories of the derived category of complexes with quasi-coherent cohomology on point-generated algebraic spaces in terms of subsets of the underlying topology.
\end{abstract}

\maketitle

\tableofcontents

\section{Introduction}
\label{sec:intro}

\subsection{What is known}
\label{sec:intro_what_is_known}

Neeman classified localizing subcategories of the derived category of a Noetherian ring in terms of subsets of $\operatorname{Spec}(R)$ \cite[Theorem 2.8]{Neeman:1992b}. 
Every localizing subcategory is generated by residue fields at points. 
Beyond the Noetherian setting, this classification fails. 
See \cite{Neeman:2000}.

The classification has been extended to Noetherian schemes. 
Alonso Tarr\'{i}o--Jeremi\'{a}s L\'{o}pez--Souto Salorio established a bijection between subsets of a Noetherian scheme $X$ and $\otimes$-localizing subcategories of the derived category $D_{\operatorname{qc}}(X)$ of complexes with quasi-coherent cohomology \cite[Corollary 4.13]{AlonsoTarrio/JeremiasLopez/SoutoSalorio:2004}. 
There exist localizing subcategories of $D_{\operatorname{qc}}(\mathbb{P}^1_k)$ that are not $\otimes$-compatible \cite[Example, pg.\ 595]{AlonsoTarrio/JeremiasLopez/SoutoSalorio:2004}, so the $\otimes$-condition is essential.

Stevenson \cite[Corollary 8.13]{Stevenson:2013} later recovered this classification using different methods. 
This is closely related to the tensor telescope conjecture.
It asserts that on a rigidly compactly generated triangulated category, smashing $\otimes$-Bousfield subcategories correspond to Thomason subsets of the Balmer spectrum.
The connection is that smashing $\otimes$-Bousfield subcategories are $\otimes$-localizing subcategories.

Hall--Rydh established the tensor telescope conjecture for algebraic stacks satisfying the Thomason condition \cite{Hall/Rydh:2017a}. 
Every quasi-compact quasi-separated algebraic spaces satisfies this condition, so their result describes the smashing $\otimes$-Bousfield subcategories. 
This does not classify arbitrary $\otimes$-localizing subcategories, since a $\otimes$-localizing subcategory need not be smashing. 

\subsection{What we do}
\label{sec:intro_what_we_do}

In \cite{Alonso/Jeremias/Loureiro:2024}, a quasi-compact quasi-separated scheme is called \textit{point-generated} if $D_{\operatorname{qc}}$ coincides with the localizing subcategory generated by residue fields of points. 
This motivates the following:

\begin{definition}
    \label{def:point_generated}
    A quasi-compact quasi-separated algebraic space $X$ is called \textbf{point-generated} if there exists $S\subseteq |X|$ such that $D_{\operatorname{qc}}(X) =\overline{\langle \big\{ \mathbf{R}(t_s)_\ast \mathcal{O}_{\operatorname{Spec}(k)} \big\} \rangle}$ where $t_s$ is a representative of $s\in S$.
\end{definition}

The main observation of this paper is that point-generatedness satisfies a form of fppf ascent and descent. 
To prove this, we apply descendability techniques of Balmer and Mathew (see \cite[Proposition 3.15 \& Definition 3.16]{Balmer:2016} or \cite[Definition 3.18]{Mathew:2016}).
In particular, we show descendability for fppf covers of quasi-compact quasi-separated algebraic spaces. 

\begin{proposition}
    \label{prop:fppf_descent}
    Let $g\colon Y \to X$ be an fppf cover of quasi-compact quasi-separated algebraic spaces.
    Then $\mathbf{R}g_\ast \mathcal{O}_Y$ is a descendable algebra in $D_{\operatorname{qc}}(X)$.
\end{proposition}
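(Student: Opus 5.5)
We use the characterization of descendability due to Balmer and Mathew: an algebra $A$ in a tensor triangulated category is descendable if the unit $\mathbf{1}$ lies in the thick $\otimes$-ideal generated by $A$. Equivalently, the fibre $I \to \mathbf{1} \to A$ satisfies $I^{\otimes n} \to \mathbf{1}$ is zero for some $n$. The plan is to reduce to the affine case and then use Mathew's result that faithfully flat maps of finite presentation of commutative rings are descendable.

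\textbf{Reduction to the affine case.} Choose an étale surjection $p\colon U \to X$ with $U$ an affine scheme. Since $X$ is quasi-compact and quasi-separated, $\mathbf{R}p_\ast$ is well behaved on $D_{\operatorname{qc}}$ (bounded cohomological dimension, projection formula, flat base change).

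Descendability is compatible with compositions. Moreover, if $g$ is fppf and $g' = g\times_X U$, then $p^\ast\mathbf{R}g_\ast\mathcal{O}_Y \simeq \mathbf{R}g'_\ast\mathcal{O}_{Y_U}$ by flat base change. So we reduce along two steps:
\begin{enumerate}[label=(\roman*)]
\item show that $\mathbf{R}p_\ast\mathcal{O}_U$ is descendable for étale covers $p$ from affines;
\item show that descendability of $\mathbf{R}g_\ast\mathcal{O}_Y$ can be checked after pulling back along a descendable cover.
\end{enumerate}

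For (ii): if $p^\ast A$ is descendable in $D_{\operatorname{qc}}(U)$, then $\mathcal{O}_U \in \operatorname{thick}^\otimes(p^\ast A)$. Applying $\mathbf{R}p_\ast$ and the projection formula gives $\mathbf{R}p_\ast\mathcal{O}_U \in \operatorname{thick}^\otimes(A)$. Since $\mathcal{O}_X \in \operatorname{thick}^\otimes(\mathbf{R}p_\ast\mathcal{O}_U)$, transitivity gives $\mathcal{O}_X \in \operatorname{thick}^\otimes(A)$.

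Iterating, we may also replace $Y_U$ by an affine étale cover $V \to Y_U$. The composite $V \to U$ is a flat, finitely presented, surjective map of affines. Mathew [Prop.\ 3.31] shows that $R \to S$ faithfully flat of finite presentation is descendable, with the index bounded, so $\mathbf{R}g'_\ast\mathcal{O}$ is descendable on $U$. The original $\mathcal{O}_{Y_U}$ is then handled by the composition property: if $B \to C$ is a map of algebras and $C$ is descendable, then $B$ is descendable, since $\operatorname{thick}^\otimes(C)\subseteq \operatorname{thick}^\otimes(B)$.

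\textbf{The main obstacle is step (i).} We must show that étale covers of a qcqs algebraic space are descendable; this is where the global geometry enters. The approach is induction on the number of steps in a finite étale dévissage (Rydh's, or Stacks Project Tag 08GL type). It expresses $X$ via étale neighbourhoods $X = W_1\cup\cdots\cup W_n$ glued by étale squares, with each piece dominated by an affine that is an isomorphism over a closed complement.

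For an elementary étale neighbourhood square with open $W\subseteq X$ and étale $f\colon V \to X$ that is an isomorphism over $Z = X\setminus W$, there is a Mayer–Vietoris triangle
$$\mathcal{O}_X \to \mathbf{R}j_\ast\mathcal{O}_W \oplus \mathbf{R}f_\ast\mathcal{O}_V \to \mathbf{R}(fj')_\ast\mathcal{O}_{V\cap W} \to \mathcal{O}_X[1].$$
This triangle puts $\mathcal{O}_X$ in the thick $\otimes$-ideal generated by $\mathbf{R}j_\ast\mathcal{O}_W\oplus\mathbf{R}f_\ast\mathcal{O}_V$. By induction, each of these lies in $\operatorname{thick}^\otimes(\mathbf{R}p_\ast\mathcal{O}_U)$ after refining $U$; here we again use the projection formula to move the ideals along pushforwards.

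The delicate points are:
\begin{enumerate}[label=(\alph*)]
\item ensuring the number of steps is finite, which uses quasi-compactness;
\item checking that the Mayer–Vietoris triangle exists in $D_{\operatorname{qc}}$ for étale neighbourhoods of algebraic spaces.
\end{enumerate}
I would try to establish (b) first via the tensor-triangulated excision of Hall–Rydh.
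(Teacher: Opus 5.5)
Your proof is correct, but it is organized differently from the paper's.

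\textbf{The paper's route.} The paper runs the induction principle for qcqs algebraic spaces (Stacks Tag 09IT) directly on the property ``$\mathcal{O}_W\in\langle\mathbf{R}(g_W)_\ast D_{\operatorname{qc}}(Y_W)\rangle$'' for quasi-compact opens $W\subseteq X$. The inductive step is the Mayer--Vietoris triangle of an elementary distinguished square (Tag 08GW). The local input is descendability of fppf covers of schemes (via Bhatt--Scholze), used separately at all three corners $U$, $X'$ and the generally non-affine $U'=U\times_W X'$.

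\textbf{Your route.} You split the argument into two stages. First you run the same Mayer--Vietoris induction, but only for an \'{e}tale presentation $p\colon U\to X$. Then you prove a locality principle: descendability can be tested after flat pullback along a descendable cover, by flat base change and the projection formula. This reduces everything to an fppf map of affines, where Mathew's theorem applies.

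\textbf{Comparison.} Your induction in (i) works verbatim for $g$ itself. At an affine corner $V$, refine $Y\times_X V$ by an affine \'{e}tale cover and apply Mathew, exactly as you do for $Y_U$. So step (ii) is a detour for this statement. It does, however, give two independently useful facts: \'{e}tale presentations are descendable, and descendability is local along descendable flat covers.

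Conversely, your observation that the triangle already places $\mathcal{O}_X$ in the $\otimes$-ideal generated by $\mathbf{R}j_\ast\mathcal{O}_W\oplus\mathbf{R}f_\ast\mathcal{O}_V$ is a real economy over the paper. With it, the corner $V\cap W$ never has to be treated, so affine input suffices. You should state why it holds: by flat base change and the projection formula, $\mathbf{R}(fj')_\ast\mathcal{O}_{V\cap W}\cong\mathbf{R}j_\ast\mathcal{O}_W\otimes^{\mathbf{L}}\mathbf{R}f_\ast\mathcal{O}_V$. Alternatively, it lies in $\langle\mathbf{R}f_\ast\mathcal{O}_V\rangle_{\otimes}$ by the retract argument of \Cref{lem:descent}.

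Your two ``delicate points'' are already available. Quasi-compactness enters exactly through the induction principle (Tag 09IT) that the paper uses, whose elementary distinguished squares have an affine \'{e}tale corner. The Mayer--Vietoris triangle in $D_{\operatorname{qc}}$ for such squares is Tag 08GW, so no appeal to Hall--Rydh is needed.
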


Variations have been proved elsewhere. 
See e.g.\ \cite[Theorem 7.1]{Hall:2022} and \cite[Theorem D]{Hall/Lamarche/Lank/Peng:2025}. 
However, we record a direct proof in \Cref{sec:descent}.

This allows us to prove that a point-generatedness of suitable algebraic space can be detected by its fppf covers. 

\begin{proposition}
    \label{prop:ascent_descent_pt_generated}
    Let $f\colon Y\to X$ be an fppf cover of quasi-compact quasi-separated algebraic spaces. 
    Then $X$ is point-generated if, and only if, $Y$ is point-generated.
\end{proposition}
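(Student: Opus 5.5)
We treat the two directions separately; both rest on the projection formula and on how residue fields move along $f$. Write $\kappa(p)$ for $\mathbf{R}(t_p)_\ast\mathcal{O}_{\operatorname{Spec}(k)}$, where $t_p$ represents $p\in|X|$ or $|Y|$. A preliminary observation is used throughout: for $y\in|Y|$ over $x=f(y)$, the point $f\circ t_y\colon \operatorname{Spec}(k(y))\to X$ factors through a representative $t_x\colon\operatorname{Spec}(k(x))\to X$ after a field extension $k(x)\subseteq k'$. Since the pushforward of $\mathcal{O}_{\operatorname{Spec} k'}$ to $\operatorname{Spec} k(x)$ is a direct sum of copies of $k(x)$, we get
\[
\mathbf{R}f_\ast \kappa(y)\in\overline{\langle \kappa(x)\rangle},
\]
and also $\kappa(x)$ is a summand of it. Dually, $\mathbf{L}f^\ast\kappa(x)$ is the pushforward of the structure sheaf of the fibre $Y_x = Y\times_X\operatorname{Spec}(k(x))$, which is a nonempty quasi-compact quasi-separated algebraic space over a field.

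Suppose first that $Y$ is point-generated by a set $S$. Take $E\in D_{\operatorname{qc}}(X)$. Descendability of $A=\mathbf{R}f_\ast\mathcal{O}_Y$ (\Cref{prop:fppf_descent}) says that $\mathcal{O}_X$ lies in the thick $\otimes$-ideal generated by $A$. Hence $E$ lies in the thick subcategory generated by the objects $A\otimes E'$, with $E'$ ranging over tensor multiples of $E$; in particular $E\in\overline{\langle A\otimes D_{\operatorname{qc}}(X)\rangle}$. By the projection formula, $A\otimes E'\cong \mathbf{R}f_\ast\mathbf{L}f^\ast E'$. Now $\mathbf{L}f^\ast E'$ lies in the localizing subcategory generated by the $\kappa(y)$, $y\in S$, and $\mathbf{R}f_\ast$ preserves coproducts, since $f$ is qcqs. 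Therefore $E\in\overline{\langle \mathbf{R}f_\ast\kappa(y)\rangle}\subseteq\overline{\langle\kappa(f(y))\rangle}$, so $X$ is point-generated by $f(S)$.

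Conversely, suppose $X$ is point-generated by $S$. Since $\mathbf{L}f^\ast$ is a coproduct-preserving triangulated functor whose image contains $\mathcal{O}_Y$, the localizing $\otimes$-ideal generated by the image is all of $D_{\operatorname{qc}}(Y)$. Every $F$ satisfies $F\cong F\otimes\mathcal{O}_Y$ and lies in the localizing subcategory generated by $F\otimes\mathbf{L}f^\ast\kappa(x)$ for $x\in S$. Moreover $F\otimes \mathbf{L}f^\ast\kappa(x)=(i_x)_\ast i_x^\ast F$, where $i_x\colon Y_x\to Y$. So it suffices to show that each fibre $Y_x$ is point-generated, and that pushing forward along the affine morphism $i_x$ sends residue fields of $Y_x$ to objects in $\overline{\langle \kappa(y)\rangle}$. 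The latter holds because residue fields of $Y_x$ map to points of $Y$ up to field extension, as in the preliminary observation.

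The main obstacle is point-generatedness of $Y_x$, a qcqs algebraic space of finite presentation over the field $k=k(x)$. My first attempt would be Noetherian induction: such a space has a dense open subscheme, and over a field the residue-field classification of Alonso Tarrío–Jeremías López–Souto Salorio applies to Noetherian schemes. I would pass from the scheme case to the space case by the localization triangle for an open $U$ with closed complement $Z$. The point is that objects supported on $Z$ are generated by pushforwards from infinitesimal thickenings of $Z$, which are Noetherian of smaller dimension, so induction applies. Alternatively, one could first take an étale, hence fppf, cover of $Y_x$ by an affine Noetherian scheme, which is point-generated by Neeman's theorem, and then apply the descent direction already proved above to conclude that $Y_x$ is point-generated. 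This second route avoids the induction and is what I would try first.
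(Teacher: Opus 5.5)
Your proof is correct, but it is organized differently from the paper's.

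\textbf{How the paper argues.} The paper never proves descent of point-generatedness directly. \Cref{lem:localizing_is_generated_by_fields} treats only fppf covers by a point-generated \emph{affine} scheme. It proves the stronger statement that every $\otimes$-localizing subcategory of $X$ is generated by residue fields, by pulling $\mathcal{T}$ back to the affine source and invoking \cite[Theorem 0.1]{Alonso/Jeremias/Loureiro:2024} there. General descent then follows by first using ascent to make an affine étale cover of $Y$ point-generated. For ascent (\Cref{lem:ascent_pt_generated}), the paper first reduces via that same lemma to an affine étale cover $U\to Y$. The fibres $U\times_X\operatorname{Spec}(k)$ are then affine Noetherian schemes, and Neeman's theorem applies to them directly.

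\textbf{How you argue.} You prove descent directly for an arbitrary point-generated $Y$, using only descendability, the projection formula and coproduct-preservation of $\mathbf{R}f_\ast$. This is exactly the case $\mathcal{T}=D_{\operatorname{qc}}(X)$ of the paper's lemma, where no classification on the source is needed. For ascent you take fibres $Y_x$ of $f$ itself, which are qcqs algebraic spaces of finite presentation over fields. You make them point-generated by feeding an affine étale cover, which is Noetherian, back into your descent step. Your flat base change and projection formula identification $F\otimes^{\mathbf{L}}\mathbf{L}f^\ast\kappa(x)\cong\mathbf{R}(i_x)_\ast\mathbf{L}i_x^\ast F$ is correct, since $t_x$ and hence $i_x$ are affine.

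\textbf{Comparison.} The two proofs use the same ingredients in the opposite order. The paper makes the source affine before taking fibres; you take fibres first and use descent to handle them. Your version is more self-contained, with no appeal to \cite[Theorem 0.1]{Alonso/Jeremias/Loureiro:2024}, and it produces explicit generating sets $f(S)$ and $|f|^{-1}(S)$. The paper's version yields the stronger statement about all $\otimes$-localizing subcategories, which it reuses for \Cref{thm:point_generated_classification}.

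\textbf{Two small remarks.} First, your preliminary observation about factoring through a residue field is fine, because $X$ is quasi-separated and hence decent. It can also be bypassed: $f\circ t_y$ itself represents $f(y)$, so \Cref{lem:independent_of_representative} applies directly. Second, commit to your second route for the fibres. The Noetherian-induction alternative is unnecessary, and its key claim, that $Z$-supported objects are generated by pushforwards from thickenings of $Z$, would itself need a genuine argument.
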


As a consequence, there are many examples of point-generatedness.

\begin{corollary}
    Any Noetherian algebraic space $X$ is point-generated.
\end{corollary}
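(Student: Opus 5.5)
Since being point-generated can be tested along fppf covers by \Cref{prop:ascent_descent_pt_generated}, the plan is to reduce to affine Noetherian schemes and then use Neeman's theorem. Let $X$ be a Noetherian algebraic space. In particular it is quasi-compact and quasi-separated, so it admits an étale surjection $f\colon U \to X$ from an affine scheme $U$: cover $X$ by finitely many étale charts and take $U=\coprod_i \operatorname{Spec}(R_i)$. Because $f$ is étale and $X$ is Noetherian, each $R_i$ is a Noetherian ring. An étale surjective morphism is flat, locally of finite presentation and surjective, so $f$ is an fppf cover between quasi-compact quasi-separated algebraic spaces. By \Cref{prop:ascent_descent_pt_generated}, $X$ is point-generated as soon as $U$ is.

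For the affine case, the first step is that $D_{\operatorname{qc}}(U) \simeq D(R)$ with $R=\prod_i R_i$, a Noetherian ring. The second step invokes Neeman \cite[Theorem 2.8]{Neeman:1992b}. His result says that every localizing subcategory of $D(R)$ is generated by the residue fields $k(\mathfrak p)$ of the primes it contains. Applied to $D(R)$ itself, this gives
\[
D(R)=\overline{\langle k(\mathfrak p) : \mathfrak p\in\operatorname{Spec}(R)\rangle}.
\]
To see this concretely, one can also argue directly. Any $M$ with $M\otimes k(\mathfrak p)=0$ for all $\mathfrak p$ vanishes, and the localizing subcategory generated by the $k(\mathfrak p)$ is a $\otimes$-ideal that contains $\Gamma_{\mathfrak p}R_{\mathfrak p}$ for every $\mathfrak p$. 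The local-to-global principle for Noetherian rings then yields $R$ in this subcategory.

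The last step is bookkeeping. Under the equivalence $D_{\operatorname{qc}}(U)\simeq D(R)$, the residue field $k(\mathfrak p)$ corresponds to $\mathbf{R}(t_{\mathfrak p})_\ast\mathcal{O}_{\operatorname{Spec}(k(\mathfrak p))}$ for the canonical point $t_{\mathfrak p}\colon\operatorname{Spec}(k(\mathfrak p))\to U$. Taking $S=|U|$ therefore shows that $U$ is point-generated in the sense of \Cref{def:point_generated}.

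The only step with real content is the affine Noetherian case, and it is handled entirely by citing Neeman's classification. The one point to check carefully is that the generators appearing in \Cref{def:point_generated} do not depend on the chosen representative of a point, up to the localizing subcategory they generate. The reason is that for a field extension $k\subseteq k'$, the object $k'$ is a direct sum of copies of $k$ in $D(k)$. Hence pushing forward from a larger field gives, up to the localizing closure, the same subcategory as pushing forward from the residue field.
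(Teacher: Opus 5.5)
Your proof is correct and follows the paper's argument. Take an étale presentation $U\to X$ by an affine (hence Noetherian) scheme, deduce that $U$ is point-generated from Neeman's theorem via $D_{\operatorname{qc}}(U)\simeq D(R)$, and descend along the fppf cover using \Cref{prop:ascent_descent_pt_generated}. Your extra remarks, namely the local-to-global sketch and the independence of the chosen representative (which the paper treats in \Cref{lem:independent_of_representative}), are fine but not needed for this existence statement.
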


\begin{proof}
    Choose an \'{e}tale presentation $U\to X$ by an affine scheme. 
    By \cite[Theorem 2.8]{Neeman:1992b} and \cite[\href{https://stacks.math.columbia.edu/tag/071Q}{Tag 071Q}]{StacksProject}, $U$ is point-generated. 
    Thus, the claim follows from \Cref{prop:ascent_descent_pt_generated}.
\end{proof}

Combining this with the results of \cite{Alonso/Jeremias/Loureiro:2024} yields a classification:

\begin{theorem}
    \label{thm:point_generated_classification}
    Let $X$ be a point-generated algebraic space.  
    There exists a bijection
    \begin{displaymath} 
        \phi \colon 
        \big\{ \otimes\textrm{-localizing subcategories of } D_{\operatorname{qc}}(X) \big\}
        \;\longleftrightarrow\;
        \big\{ \textrm{subsets of } |X| \big\} \colon \theta
    \end{displaymath}
    given as follows: 
    \begin{itemize}
        \item $\phi(\mathcal{T})$ is the set of $p \in |X|$ represented by a $t\colon \operatorname{Spec}(k) \to X$ such that $\mathbf{R}t_\ast \mathcal{O}_{\operatorname{Spec}(k)} \in \mathcal{T}$
        \item $\theta$ sends a subset $S \subseteq |X|$ to the localizing subcategory generated by $\{\mathbf{R}(t_s)_\ast \mathcal{O}_{\operatorname{Spec}(k)}\}_{s \in S}$, where $t_s$ is a representative of $s\in S$.
    \end{itemize}
\end{theorem}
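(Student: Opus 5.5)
The plan is to show that $\phi$ and $\theta$ are mutually inverse, following the scheme-theoretic argument of \cite{Alonso/Jeremias/Loureiro:2024}. The engine is a support theory built from residue fields. For $p\in|X|$ with representative $t_p\colon\operatorname{Spec}(k)\to X$, write $\kappa(p)=\mathbf{R}(t_p)_\ast\mathcal{O}_{\operatorname{Spec}(k)}$. First we check that the localizing subcategory $\overline{\langle\kappa(p)\rangle}$ does not depend on the representative. Given a field extension $k\subseteq k'$, the projection formula gives $\mathbf{R}t_\ast\mathcal{O}_{k'}\cong\bigoplus\mathbf{R}t_\ast\mathcal{O}_k$, a possibly infinite direct sum. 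Conversely, $\mathcal{O}_k$ is a summand of $\mathcal{O}_{k'}$ as $k$-modules. So both objects generate the same localizing subcategory, and in particular $\phi$ is well defined.

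The same adjunctions show that these localizing subcategories are $\otimes$-ideals. For $F\in D_{\operatorname{qc}}(X)$ we have $F\otimes\kappa(p)\cong\mathbf{R}t_\ast\mathbf{L}t^\ast F$, and $\mathbf{L}t^\ast F$ is a sum of shifts of $k$. Hence $F\otimes\kappa(p)$ lies in $\overline{\langle\kappa(p)\rangle}$, and so $\theta(S)$ is $\otimes$-localizing.

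The key computation is the orthogonality
\[\kappa(p)\otimes\kappa(q)=0 \text{ for } p\neq q,\qquad \kappa(p)\otimes\kappa(p)\neq 0.\]
Base change identifies $\kappa(p)\otimes\kappa(q)$ with the pushforward of the structure sheaf of $\operatorname{Spec}(k)\times_X\operatorname{Spec}(k')$. This fibre product is empty exactly when $p\neq q$. When $p=q$ it is nonempty, and nonvanishing then follows because $\mathbf{R}t_\ast$ is faithful on nonzero objects of $D(\operatorname{Spec}(k)\times_X\operatorname{Spec}(k'))$. The latter holds since $t$ is affine after base change to an étale atlas, i.e.\ $t$ is quasi-affine.

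Given orthogonality, $\phi\theta(S)=S$ follows formally. Clearly $S\subseteq\phi\theta(S)$. For the reverse inclusion, take $q\notin S$. Then $\kappa(q)\otimes(-)$ vanishes on every generator of $\theta(S)$, and since it is exact and commutes with sums, it kills all of $\theta(S)$. But $\kappa(q)\otimes\kappa(q)\neq0$, so $\kappa(q)\notin\theta(S)$.

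For $\theta\phi(\mathcal{T})=\mathcal{T}$, the inclusion $\subseteq$ is immediate from the definition of $\phi$. For the converse, let $F\in\mathcal{T}$. Point-generatedness gives
\[F\cong F\otimes\mathcal{O}_X\in\overline{\langle F\otimes\kappa(p): p\in S_0\rangle},\]
where $S_0$ is as in Definition~\ref{def:point_generated}. A localizing subcategory generated by residue fields is automatically $\otimes$-ideal, so one may replace $S_0$ by $|X|$. Each $F\otimes\kappa(p)$ lies in $\mathcal{T}$, since $\mathcal{T}$ is a $\otimes$-ideal. It remains to show that if $F\otimes\kappa(p)\neq0$ then $p\in\phi(\mathcal{T})$. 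Indeed $F\otimes\kappa(p)\cong\mathbf{R}t_\ast V$ with $V$ a nonzero complex of $k$-vector spaces. Such a $V$ has $k[n]$ as a summand for some $n$, so $\kappa(p)$ is a retract of $F\otimes\kappa(p)$ up to shift and therefore lies in $\mathcal{T}$. Thus every nonzero $F\otimes\kappa(p)$ lies in $\theta\phi(\mathcal{T})$, so $F\in\theta\phi(\mathcal{T})$.

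The main obstacle is the orthogonality step, and in particular nonvanishing for algebraic spaces, where points are only equivalence classes of field-valued maps. My first approach would be to reduce to an étale atlas $U\to X$. Pulling back, $\mathbf{L}u^\ast\kappa(p)$ becomes a finite sum of residue fields of points of $U$ over $p$, using that the fibre of an étale map over a field is a finite disjoint union of spectra of finite separable extensions. The scheme case of \cite{Alonso/Jeremias/Loureiro:2024} then applies. Conservativity of $\mathbf{L}u^\ast$ for a surjective flat $u$ transfers vanishing and nonvanishing back to $X$.
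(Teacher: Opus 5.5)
Your proof is correct in substance, but it takes a genuinely different route from the paper.

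\emph{The paper's route.} The paper reduces everything to an affine \'{e}tale atlas $s\colon U\to X$. For $\theta\circ\phi=1$ it uses \Cref{lem:localizing_is_generated_by_fields}, which needs three inputs: descendability of $\mathbf{R}s_\ast\mathcal{O}_U$ (\Cref{prop:fppf_descent}), point-generatedness of $U$ (\Cref{prop:ascent_descent_pt_generated}), and the classification of \cite{Alonso/Jeremias/Loureiro:2024} on $U$. For $\phi\circ\theta=1$ it pulls generators back to $U$, decomposes the Noetherian fibres $U\times_X\operatorname{Spec}(\kappa(u))$ using \cite[Theorem 2.8]{Neeman:1992b}, and again invokes the classification on $U$.

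\emph{Your route.} You run the classical argument directly on $X$. Injectivity of $\theta$ comes from tensor-orthogonality of residue-field objects. This uses no point-generatedness, so $\phi\circ\theta=1$ holds on every quasi-compact quasi-separated algebraic space. For $\theta\circ\phi=1$ you tensor $\mathcal{O}_X\in\overline{\langle\{\kappa(p)\}_{p\in S_0}\rangle}$ with $F\in\mathcal{T}$. The key observation is that $F\otimes^{\mathbf{L}}\kappa(p)\cong\mathbf{R}t_\ast\mathbf{L}t^\ast F$ is a sum of shifts of $\kappa(p)$, so it is either zero or has a shift of $\kappa(p)$ as a summand. This is shorter and needs neither descendability nor the scheme-level classification. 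It even shows $S_0=|X|$ is forced. What the paper's route buys in exchange is the fppf ascent and descent of point-generatedness itself. Your argument cannot supply that, since it takes point-generatedness of $X$ as input.

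\emph{One step needs a different justification.} You claim that base change identifies $\kappa(p)\otimes^{\mathbf{L}}\kappa(q)$ with the pushforward of the structure sheaf of $\operatorname{Spec}(k)\times_X\operatorname{Spec}(k')$. This is false. The map $t$ is not flat, and without Tor-independence the derived tensor product is not computed by the classical fibre product; already $\mathbb{F}_p\otimes^{\mathbf{L}}_{\mathbb{Z}}\mathbb{F}_p\cong\mathbb{F}_p\oplus\mathbb{F}_p[1]$. So your nonvanishing argument via faithfulness of $\mathbf{R}t_\ast$ does not apply as written.

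Both conclusions are still true, and the fallback you sketch is the right proof:
\begin{itemize}
\item Flat base change along the \'{e}tale atlas $u\colon U\to X$ is legitimate.
\item $U\times_X\operatorname{Spec}(k)$ is a finite disjoint union of spectra of finite separable extensions of $k$. By surjectivity of $|U\times_X\operatorname{Spec}(k)|\to|U|\times_{|X|}|\operatorname{Spec}(k)|$, its image in $|U|$ is exactly $|u|^{-1}(p)$.
\item On $U=\operatorname{Spec}(A)$, take fields $k_1,k_2$ over distinct primes. An element lying in one prime but not the other acts on the Tor groups both as zero and invertibly, so $\operatorname{Tor}^A_\ast(k_1,k_2)=0$.
\item Over a common prime $\mathfrak{p}$ one has $\operatorname{Tor}^A_0(k_1,k_2)=k_1\otimes_{\kappa(\mathfrak{p})}k_2\neq 0$.
\item $\mathbf{L}u^\ast$ is conservative on $D_{\operatorname{qc}}(X)$, which transfers vanishing and nonvanishing back to $X$.
\end{itemize}

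Two smaller points. For independence of representatives, pass through a common field $\ell$ dominating both representatives, as in \Cref{lem:independent_of_representative}, rather than assuming one field extends the other. Also, $t$ is in fact affine by \cite[\href{https://stacks.math.columbia.edu/tag/09TF}{Tag 09TF}]{StacksProject}, not merely quasi-affine.
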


These results show that $\otimes$-localizing subcategories are generated by a set of pushforwards of structure sheaves of spectra of fields. 
Our proofs do not mimic \cite{Alonso/Jeremias/Loureiro:2024}. 
Instead, we use descendability and \'{e}tale presentations to reduce the problem to the scheme case.

\Cref{thm:point_generated_classification} extends the classification of \cite{Alonso/Jeremias/Loureiro:2024} to algebraic spaces. 
In the Noetherian case, \cite[Theorem 0.1]{Alonso/Jeremias/Loureiro:2024} can be replaced with \cite[Theorem 2.8]{Neeman:1992b}, yielding an alternative proof that proceeds from affine schemes to Noetherian algebraic spaces. 

Balchin--Omar G\'{o}mez--Stevenson have shown that the polynomial ring in countably many variables over a field is not point-generated \cite[Example 6.1]{Balchin/OmarGomez/Stevenson:2026}. 
\Cref{prop:ascent_descent_pt_generated} implies the source of every fppf cover of this scheme is likewise not point-generated.

\begin{ack}
    Lank was supported under the ERC Advanced Grant 101095900-TriCatApp. The author thanks Timothy De Deyn and Michal Hrbek for helpful discussions.
\end{ack}

\section{Preliminaries}
\label{sec:prelim}

Here, `strictly full' means a full subcategory closed under isomorphisms.

\subsection{Conventions/notation}

We follow \cite{StacksProject} for algebraic spaces and their derived categories. 
Let $X$ be a quasi-compact quasi-separated algebraic space. 
$\operatorname{Mod}(X)$ is the Grothendieck abelian category of sheaves of $\mathcal{O}_X$-modules on the small \'{e}tale site $X_{\textrm{\'{e}tale}}$ of $X$ \cite[\href{https://stacks.math.columbia.edu/tag/03LT}{Tag 03LT}]{StacksProject}. 
$\operatorname{Qcoh}(X)$ is the full subcategory of $\operatorname{Mod}(X)$ consisting of quasi-coherent sheaves. 
$D(X) := D(\operatorname{Mod}(X))$ is the derived category of $\operatorname{Mod}(X)$. 
$D_{\operatorname{qc}}(X)$ is the full subcategory of $D(X)$ consisting of complexes with quasi-coherent cohomology sheaves. 
$\operatorname{Perf}(X)$ is the full subcategory of perfect complexes in $D_{\operatorname{qc}}(X)$. 

\subsection{Generation}
\label{sec:prelim_generation}

We discuss generation for triangulated categories. 
See \cite{Bondal/VandenBergh:2003} for details. 
Let $\mathcal{T}$ be a triangulated category with shift functor $[1]\colon \mathcal{T} \to \mathcal{T}$. 
Consider a subcategory $\mathcal{S} \subseteq \mathcal{T}$. 
A triangulated subcategory of $\mathcal{T}$ is called \textbf{thick} if it is closed under direct summands. 
Denote by $\langle \mathcal{S} \rangle$ the smallest thick subcategory of $\mathcal{T}$ containing $\mathcal{S}$; if $\mathcal{S}$ consists of a single object $G$, we write $\langle G \rangle := \langle \mathcal{S} \rangle$. 
Set $\operatorname{add}(\mathcal{S})$ to be the smallest strictly full subcategory of $\mathcal{T}$ containing $\mathcal{S}$ that is closed under shifts, finite coproducts, and direct summands. 
Inductively, let $\langle \mathcal{S} \rangle_0$ consist of all objects in $\mathcal{T}$ isomorphic to the zero object, $\langle \mathcal{S} \rangle_1 := \operatorname{add}(\mathcal{S})$, and 
\begin{displaymath}
    \langle \mathcal{S} \rangle_n := \operatorname{add} \{ \operatorname{cone}(\phi) \mid \phi \in \operatorname{Hom}_{\mathcal{T}} (\langle \mathcal{S} \rangle_{n-1}, \langle \mathcal{S} \rangle_1) \}.
\end{displaymath}
It can be checked that $\langle \mathcal{S} \rangle = \cup_{n=0}^\infty \langle \mathcal{S} \rangle_n$. 

Suppose $\mathcal{T}$ admits small coproducts. 
Set $\operatorname{Add}(\mathcal{S})$ to be the smallest strictly full subcategory of $\mathcal{T}$ containing $\mathcal{S}$ that is closed under shifts, small coproducts, and direct summands. 
Inductively, let $\overline{\langle \mathcal{S} \rangle}_0$ consist of all objects in $\mathcal{T}$ isomorphic to the zero object, $\overline{\langle \mathcal{S} \rangle}_1 := \operatorname{Add}(\mathcal{S})$, and 
\begin{displaymath}
    \overline{\langle \mathcal{S} \rangle}_n := \operatorname{Add} \{ \operatorname{cone}(\phi) \mid \phi \in \operatorname{Hom}_{\mathcal{T}} (\overline{\langle \mathcal{S} \rangle}_{n-1}, \overline{\langle \mathcal{S} \rangle}_1) \}.
\end{displaymath}

If $\mathcal{T}$ admits small coproducts, then the collection of compact objects in $\mathcal{T}$ is denoted by $\mathcal{T}^c$. 
These form a triangulated subcategory of $\mathcal{T}$. 
The \textbf{localizing subcategory} generated by a collection $\mathcal{S}\subseteq \mathcal{T}$, denoted by $\overline{\langle \mathcal{S} \rangle}$, is the smallest triangulated subcategory containing $\mathcal{S}$ and closed under small coproducts. 
An induction argument on $n$ shows that $\langle \mathcal{S} \rangle_n \subseteq \overline{\langle \mathcal{S} \rangle}$ for all $n\geq 0$. 
If $\mathcal{T}^c$ is essentially small, then we say that $\mathcal{T}$ is \textbf{compactly generated} when $\mathcal{T} = \overline{\langle \mathcal{T}^c \rangle}$. 
Equivalently, $\mathcal{T}$ is compactly generated if, for any $E \in \mathcal{T}$ satisfying $\operatorname{Hom}(P, E) = 0$ for all $P \in \mathcal{T}^c$, one has $E \cong 0$.
Note that classical generators for $\mathcal{T}^c$ coincide with compact generators for $\mathcal{T}$ (see e.g.\ \cite[\href{https://stacks.math.columbia.edu/tag/09SR}{Tag 09SR}]{StacksProject}).


Assume that $\mathcal{T}$ is a tensor triangulated category with tensor $\otimes$ and unit $1$. 
The \textbf{$\otimes$-localizing subcategory} generated by a collection $\mathcal{S}\subseteq \mathcal{T}$, denoted by $\overline{\langle \mathcal{S} \rangle}_{\otimes}$, is the smallest triangulated subcategory containing $\mathcal{S}$, closed under small coproducts, and the tensor action with respect to $\mathcal{T}$. 
A useful fact, used freely in our work, is that localizing subcategories in a triangulated category with small coproducts are closed under direct summands (see \cite[Remark 1.4]{Bokstedt/Neeman:1993}). 

\subsection{Descendability}
\label{sec:prelim_descendability}

We recall a notion of descent for triangulated categories (see \cite[Definition 3.18]{Mathew:2016} for the $\infty$-categorical analog). 
Let $(\mathcal{T},\otimes,\mathbf{1})$ be a tensor triangulated category. 
Given $T\in \mathcal{T}$, denote by $\langle T \rangle_{\otimes}$ the smallest thick subcategory containing $T$ that is closed under the tensor action by $\mathcal{T}$. 
A commutative monoid $A$ in $\mathcal{T}$ is called \textbf{descendable} if $\langle A \rangle_{\otimes} = \mathcal{T}$. 
See \cite[Lemma 6.3]{DeDeyn/Lank/ManaliRahul:2025} for other characterizations. 

\begin{lemma}
    \label{lem:descent}
    Let $f\colon Y \to X$ be a morphism of quasi-compact quasi-separated algebraic spaces.
    Then $\langle \mathbf{R}f_\ast \mathcal{O}_Y \rangle_{\otimes} = \langle \mathbf{R}f_\ast D_{\operatorname{qc}}(Y) \rangle$. 
    Consequently, $\mathbf{R}f_\ast \mathcal{O}_Y$ is descendable if, and only if, $\mathcal{O}_X \in \langle \mathbf{R}f_\ast D_{\operatorname{qc}}(Y) \rangle$. 
\end{lemma}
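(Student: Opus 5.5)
We prove the two inclusions separately, using the projection formula for qcqs algebraic spaces \cite[\href{https://stacks.math.columbia.edu/tag/08EU}{Tag 08EU}]{StacksProject}. It gives a natural isomorphism
\begin{displaymath}
    E \otimes^{\mathbf{L}} \mathbf{R}f_\ast F \cong \mathbf{R}f_\ast(\mathbf{L}f^\ast E \otimes^{\mathbf{L}} F)
\end{displaymath}
for $E \in D_{\operatorname{qc}}(X)$ and $F \in D_{\operatorname{qc}}(Y)$. We also use that $\mathbf{R}f_\ast$ sends $D_{\operatorname{qc}}(Y)$ into $D_{\operatorname{qc}}(X)$, which holds for qcqs morphisms.

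For $\supseteq$, take $F \in D_{\operatorname{qc}}(Y)$. Since $\mathbf{R}f_\ast \mathcal{O}_Y$ is a monoid with unit $\eta\colon \mathcal{O}_X \to \mathbf{R}f_\ast\mathcal{O}_Y$, the object $\mathbf{R}f_\ast F$ is a module over it. Concretely, the module structure is the lax monoidal map $\mathbf{R}f_\ast \mathcal{O}_Y \otimes \mathbf{R}f_\ast F \to \mathbf{R}f_\ast(\mathcal{O}_Y\otimes F) = \mathbf{R}f_\ast F$. Composing $\eta \otimes \mathrm{id}$ with this action gives the identity, by the unit axiom. Hence $\mathbf{R}f_\ast F$ is a direct summand of $\mathbf{R}f_\ast\mathcal{O}_Y \otimes \mathbf{R}f_\ast F$. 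The latter lies in $\langle \mathbf{R}f_\ast \mathcal{O}_Y\rangle_\otimes$, since that subcategory is closed under the tensor action. Thickness then gives $\mathbf{R}f_\ast F \in \langle \mathbf{R}f_\ast \mathcal{O}_Y\rangle_\otimes$, and therefore $\langle \mathbf{R}f_\ast D_{\operatorname{qc}}(Y)\rangle \subseteq \langle \mathbf{R}f_\ast \mathcal{O}_Y\rangle_\otimes$.

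For $\subseteq$, set $\mathcal{C} := \langle \mathbf{R}f_\ast D_{\operatorname{qc}}(Y)\rangle$. It is thick and contains $\mathbf{R}f_\ast\mathcal{O}_Y$, so it suffices to show that $\mathcal{C}$ is closed under $E\otimes^{\mathbf{L}} -$ for every $E \in D_{\operatorname{qc}}(X)$. Fix $E$ and consider $\mathcal{C}_E := \{ C \in \mathcal{C} \mid E \otimes^{\mathbf{L}} C \in \mathcal{C}\}$. This is a thick subcategory, because $E\otimes^{\mathbf{L}}-$ is exact and preserves summands. By the projection formula, $\mathcal{C}_E$ contains every $\mathbf{R}f_\ast F$: indeed $\mathbf{L}f^\ast E \otimes^{\mathbf{L}} F \in D_{\operatorname{qc}}(Y)$, so $E \otimes^{\mathbf{L}} \mathbf{R}f_\ast F \cong \mathbf{R}f_\ast(\mathbf{L}f^\ast E \otimes^{\mathbf{L}} F)$ lies in $\mathcal{C}$. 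Since $\mathcal{C}$ is the smallest thick subcategory containing these objects, $\mathcal{C}_E = \mathcal{C}$. Thus $\mathcal{C}$ is $\otimes$-closed, and $\langle \mathbf{R}f_\ast\mathcal{O}_Y\rangle_\otimes \subseteq \mathcal{C}$.

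For the consequence, note that $\langle \mathbf{R}f_\ast\mathcal{O}_Y\rangle_\otimes$ is a $\otimes$-ideal. It therefore equals $D_{\operatorname{qc}}(X)$ if and only if it contains the unit $\mathcal{O}_X$, and by the equality just proved this holds if and only if $\mathcal{O}_X \in \langle \mathbf{R}f_\ast D_{\operatorname{qc}}(Y)\rangle$.

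The main obstacle is the bookkeeping in the summand argument for $\supseteq$: one must verify that the unit axiom holds up to the identifications made by the lax monoidal structure of $\mathbf{R}f_\ast$ at the level of the triangulated category. An alternative that sidesteps this is to use the projection formula in the form $\mathbf{R}f_\ast\mathcal{O}_Y \otimes^{\mathbf{L}} \mathbf{R}f_\ast F \cong \mathbf{R}f_\ast(\mathbf{L}f^\ast\mathbf{R}f_\ast F)$. The counit $\mathbf{L}f^\ast\mathbf{R}f_\ast F \to F$ then pushes forward to a map that is split by $\eta\otimes \mathrm{id}$, by the triangle identities, which again exhibits $\mathbf{R}f_\ast F$ as a direct summand.
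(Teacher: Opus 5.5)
Your proof is correct and is essentially the paper's. Both arguments use the projection formula to show that $\langle \mathbf{R}f_\ast D_{\operatorname{qc}}(Y)\rangle$ is a thick $\otimes$-ideal containing $\mathbf{R}f_\ast\mathcal{O}_Y$, and your ``alternative'' retract argument (triangle identity plus $\mathbf{R}f_\ast\mathbf{L}f^\ast\mathbf{R}f_\ast F\cong \mathbf{R}f_\ast F\otimes^{\mathbf{L}}\mathbf{R}f_\ast\mathcal{O}_Y$) is exactly the paper's proof of the reverse inclusion. The differences are cosmetic: you get $\otimes$-closure from the thick subcategory $\mathcal{C}_E$ rather than by the paper's induction on the levels $\langle -\rangle_n$, and you need not identify the splitting map with $\eta\otimes\mathrm{id}$, since the triangle identity alone exhibits the retraction.
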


\begin{proof}
    The second claim follows from the first by definition of descendability.
    We prove the first claim. 
    Clearly, $\mathbf{R}f_\ast \mathcal{O}_Y \in  \langle \mathbf{R}f_\ast D_{\operatorname{qc}}(Y) \rangle$. 
    Moreover, an induction argument shows $\langle \mathbf{R}f_\ast \mathcal{O}_Y \rangle_{\otimes} \subseteq \langle \mathbf{R}f_\ast D_{\operatorname{qc}}(Y) \rangle$.
    Indeed, let $E\in \langle \mathbf{R}f_\ast D_{\operatorname{qc}}(Y) \rangle_n$. 
    If $n=0$, then $E\cong 0$, and so there exists nothing to prove. 
    If $n=1$, then $E$ is a direct summand of a finite coproduct of the form $\oplus_{i\in I} \mathbf{R}f_\ast E_i$ for some $E_i\in D_{\operatorname{qc}}(Y)$, and projection formula can be used to check that $A \otimes^{\mathbf{L}} \oplus_{i\in I} \mathbf{R}f_\ast E_i \in \langle \mathbf{R}f_\ast D_{\operatorname{qc}}(Y) \rangle$ for all $A\in D_{\operatorname{qc}}(X)$. 
    Assume we have proved the claim up to $n\geq 1$. 
    Let $E\in \langle \mathbf{R}f_\ast D_{\operatorname{qc}}(Y) \rangle_{n+1}$. 
    There exists $A\in \langle \mathbf{R}f_\ast D_{\operatorname{qc}}(Y) \rangle_n$, $B\in \langle \mathbf{R}f_\ast D_{\operatorname{qc}}(Y) \rangle_1$, and a distinguished triangle 
    \begin{displaymath}
        A \to E \oplus E^\prime \to B \to A[1].
    \end{displaymath}
    Choose any $G\in D_{\operatorname{qc}}(X)$. 
    Tensoring with $G$ yields a distinguished triangle 
    \begin{displaymath}
        G\otimes^\mathbf{L} A \to G\otimes^\mathbf{L} (E \oplus E^\prime) \to G\otimes^\mathbf{L} B \to (G\otimes^\mathbf{L}A)[1].
    \end{displaymath}
    The induction hypothesis says $G\otimes^\mathbf{L} A, G\otimes^\mathbf{L} B \in \langle \mathbf{R}f_\ast D_{\operatorname{qc}}(Y) \rangle$, 
    and so,
    \begin{displaymath}
        (G\otimes^\mathbf{L} E) \oplus (G\otimes^\mathbf{L} E^\prime) 
        \cong G\otimes^\mathbf{L} (E \oplus E^\prime)
        \in \langle \mathbf{R}f_\ast D_{\operatorname{qc}}(Y) \rangle.
    \end{displaymath}
    To check the reverse inclusion, it suffices to show $\mathbf{R}f_\ast D_{\operatorname{qc}}(Y) \subseteq \langle \mathbf{R}f_\ast \mathcal{O}_Y \rangle_{\otimes}$. 
    Choose $E\in D_{\operatorname{qc}}(Y)$. 
    Then unit equations for adjunction yield a composition 
    \begin{displaymath}
        \mathbf{R}f_\ast E \to \mathbf{R}f_\ast \mathbf{L}f^\ast \mathbf{R}f_\ast E \to \mathbf{R}f_\ast E
    \end{displaymath}
    which is the identity on $\mathbf{R}f_\ast E$. 
    Hence, $\mathbf{R}f_\ast E \in \langle \mathbf{R}f_\ast \mathbf{L}f^\ast \mathbf{R}f_\ast E \rangle_1$. 
    Projection formula says that 
    \begin{displaymath}
        \mathbf{R}f_\ast \mathbf{L}f^\ast \mathbf{R}f_\ast E \cong \mathbf{R}f_\ast E \otimes^{\mathbf{L}} \mathbf{R}f_\ast \mathcal{O}_Y.
    \end{displaymath}
    Since $\mathbf{R}f_\ast E \otimes^{\mathbf{L}} \mathbf{R}f_\ast \mathcal{O}_Y\in \langle \mathbf{R}f_\ast \mathcal{O}_Y \rangle_{\otimes}$, the claim follows.
\end{proof}

\section{Descent}
\label{sec:descent}

We prove a version of descent. 

\begin{lemma}
    \label{lem:affine_descendability}
    Let $f\colon Y \to X$ be an fppf cover of quasi-compact quasi-separated algebraic spaces. 
    If $X$ is a scheme, then $\mathbf{R}f_\ast \mathcal{O}_Y$ is a descendable algebra in $D_{\operatorname{qc}}(X)$.
\end{lemma}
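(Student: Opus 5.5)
Throughout, $f\colon Y\to X$ is an fppf cover with $X$ a quasi-compact quasi-separated scheme. By \Cref{lem:descent} it suffices to show $\mathcal{O}_X \in \langle \mathbf{R}f_\ast D_{\operatorname{qc}}(Y)\rangle$, and I will obtain this in three steps.

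\emph{Step 1 (reduce to an affine target).} Cover $X$ by finitely many affine opens $U_1,\dots,U_n$. Descendability is local on $X$ for a finite Zariski cover. Concretely, by Mayer--Vietoris (induction on $n$, using the quasi-separated hypothesis so that intersections are quasi-compact) it suffices to know that, for each open immersion $j\colon U\to X$ in the cover, $\mathbf{R}j_\ast\mathcal{O}_U$ lies in $\langle \mathbf{R}f_\ast D_{\operatorname{qc}}(Y)\rangle_{\otimes}$. Flat base change gives $\mathbf{R}j_\ast \mathbf{R}(f_U)_\ast \cong \mathbf{R}f_\ast \mathbf{R}j'_\ast$, where $f_U\colon f^{-1}U\to U$ and $j'\colon f^{-1}U\to Y$. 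Hence if $\mathcal{O}_U\in\langle \mathbf{R}(f_U)_\ast D_{\operatorname{qc}}(f^{-1}U)\rangle$, applying the exact functor $\mathbf{R}j_\ast$ puts $\mathbf{R}j_\ast\mathcal{O}_U$ in $\langle \mathbf{R}f_\ast D_{\operatorname{qc}}(Y)\rangle$. The tensor-ideal version then follows from \Cref{lem:descent}. So I may assume $X=\operatorname{Spec}(A)$ is affine.

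\emph{Step 2 (refine the cover).} Any $Y'\to Y$ with $Y'\to X$ still an fppf cover works as well: $\mathbf{R}f_\ast D_{\operatorname{qc}}(Y)$ contains the pushforwards along $Y'\to X$, since these factor through $\mathbf{R}f_\ast$. Choose an \'etale surjection from an affine scheme onto $Y$; quasi-compactness makes this possible. We may then assume $Y=\operatorname{Spec}(B)$ with $A\to B$ faithfully flat and of finite presentation. By \Cref{lem:descent} it is enough to show that $B$ is descendable as a commutative algebra in $D(A)$, that is, $A\in\langle B\rangle_{\otimes}$.

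\emph{Step 3 (the main obstacle: faithfully flat finitely presented ring maps are descendable).} This is where the real work lies. My plan is to use the fact that fppf covers are refined, \'etale-locally, by quasi-finite flat, or even finite locally free, covers; this is the Stacks Project result on fppf coverings being refined by compositions of Zariski, \'etale, and finite locally free ones. That would reduce the problem to two pieces:
\begin{enumerate}[label=(\roman*)]
\item \emph{Finite locally free $A\to B$.} Here the trace map splits $A\to B$ Zariski-locally where the rank is invertible, but not in general. Instead use that $A\to B$ is pure and $B$ is a perfect $A$-module. Let $I$ be the fiber of $A\to B$. Then $I\otimes I\to A$ is zero after tensoring with $B$, and because $B$ is faithfully flat one gets $I^{\otimes k}\to A$ null for some $k$. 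I would first try to prove this via Mathew's criterion for finite faithfully flat maps, or via Bhatt--Scholze's result that finite flat (even finite fppf) covers are descendable of index at most $2$.
\item \emph{\'Etale covers of affines.} Use finitely many standard \'etale pieces together with Mayer--Vietoris on the target, or cite \cite[Theorem 7.1]{Hall:2022}.
\end{enumerate}
Composition of descendable maps is descendable, since $\langle\,\cdot\,\rangle_\otimes$ is transitive along the algebra maps. Combining the pieces gives the claim for $X$ affine, and Step 1 globalizes it.
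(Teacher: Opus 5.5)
You have a genuine gap in Step 3. Your Step 2 is exactly the paper's argument. The paper composes $f$ with an \'{e}tale presentation $s\colon U\to Y$ from an affine scheme and uses $\langle \mathbf{R}(f\circ s)_\ast D_{\operatorname{qc}}(U)\rangle\subseteq\langle \mathbf{R}f_\ast D_{\operatorname{qc}}(Y)\rangle$. It then simply cites \cite[Lemma 11.23(2)]{Bhatt/Scholze:2017} for descendability of the fppf cover $f\circ s$ onto the scheme $X$, so your Step 1 is harmless but unnecessary. The gap is in Step 3, where you try to supply that cited input yourself.

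\textbf{The \'{e}tale piece (ii) fails as proposed.} Mayer--Vietoris on the target only handles Zariski covers, and a surjective \'{e}tale map $\operatorname{Spec}(C)\to\operatorname{Spec}(A)$ need not be refined by any Zariski cover. Decomposing into standard \'{e}tale pieces leaves you with a faithfully flat map $A\to\prod_i (A[x]/(f_i))_{g_i}$. This map is neither finite nor a Zariski cover, so you are back at the original problem. Falling back on \cite[Theorem 7.1]{Hall:2022} means invoking a result the paper itself lists as a variant of the statement being proved. At that point the refinement through \'{e}tale and finite locally free covers buys nothing.

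\textbf{The heuristic in (i) is invalid, though (i) itself is easy.} Faithfully flat base change does not detect nullity of morphisms. Already $I\to A$ becomes null after $-\otimes^{\mathbf{L}}_A B$, because $B\to B\otimes_A B$ has a retraction, yet $I\to A$ is usually not null. The finite locally free case is in fact trivial. Since $A\to B$ is pure, $B/A$ is flat and finitely presented, hence projective, so $A$ is a direct summand of $B$.

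\textbf{The missing idea is a projective-dimension bound.} It handles every fppf cover at once and makes the refinement unnecessary.
\begin{enumerate}[label=(\alph*)]
\item Let $A\to B$ be faithfully flat of finite presentation, and set $Q:=B/A$. Then $Q$ is flat, by purity.
\item $Q$ is countably presented as an $A$-module. By Lazard it is a sequential colimit of finite free modules, so it has projective dimension $\le 1$. The same holds for $Q\otimes_A Q$.
\item Put $I:=\operatorname{fib}(A\to B)\cong Q[-1]$. The map $I\otimes^{\mathbf{L}}_A I\to A$ is a class in $\operatorname{Ext}^2_A(Q\otimes_A Q,A)=0$, hence null.
\item The cofiber of $I^{\otimes 2}\to A$ is an extension of $B$ by $B\otimes^{\mathbf{L}}_A I$, both $B$-modules. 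Since $I^{\otimes 2}\to A$ is null, $A$ is a retract of this cofiber, so $A\in\langle B\rangle_{\otimes}$.
\end{enumerate}
Either insert this argument in place of your Step 3, or cite the fppf case directly as the paper does.
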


\begin{proof}
    Choose an \'{e}tale presentation $s\colon U \to Y$ from an affine scheme. 
    By \cite[\href{https://stacks.math.columbia.edu/tag/03Y9}{Tag 03Y9}]{StacksProject}, $f\circ s$ is an fppf cover. 
    Moreover, by \cite[Lemma 11.23(2)]{Bhatt/Scholze:2017}, $\mathbf{R}(f \circ s)_\ast \mathcal{O}_U$ is descendable. 
    Recall that there exists a natural isomorphism $\mathbf{R}f_\ast \circ \mathbf{R}s_\ast \to \mathbf{R}(f\circ s)_\ast$ \cite[\href{https://stacks.math.columbia.edu/tag/0D6E}{Tag 0D6E}]{StacksProject}. 
    Thus, from \Cref{lem:descent}, it follows that 
    \begin{displaymath}
        \mathcal{O}_X 
        \in \langle \mathbf{R}(f \circ s)_\ast D_{\operatorname{qc}}(U) \rangle \subseteq \langle \mathbf{R}f_\ast D_{\operatorname{qc}}(Y) \rangle,
    \end{displaymath}
    which yields that $\mathbf{R}f_\ast \mathcal{O}_Y$ is a descendable algebra in $D_{\operatorname{qc}}(X)$.
\end{proof}

\begin{proof}
    [Proof of \Cref{prop:fppf_descent}]
    Let $P$ be the property for quasi-compact open immersions $U\to X$ that $\mathcal{O}_U \in\langle \mathbf{R}(g_U)_\ast D_{\operatorname{qc}}(U\times_X Y) \rangle$ where $g_U \colon Y_U :=U\times_X Y \to U$ is the base change of $g$ along $U\to X$.
    We apply \cite[\href{https://stacks.math.columbia.edu/tag/09IT}{Tag 09IT}]{StacksProject}. 
    We may assume $X$ is not empty. 
    By \cite[\href{https://stacks.math.columbia.edu/tag/06NH}{Tag 06NH}]{StacksProject}, $X$ contains a nonempty subspace $X_0$ which is a scheme. 
    Choose a nonempty affine open subscheme $W_0$ of $X_0$.  
    Then $W_0$ satisfies the desired claim via \Cref{lem:affine_descendability}. 

    We check the remaining condition in \cite[\href{https://stacks.math.columbia.edu/tag/09IT}{Tag 09IT}]{StacksProject}.
    Consider an elementary distinguished square given by a quasi-compact open immersion $j\colon U \to W$ such that $W_0 \subseteq U$, an \'{e}tale morphism $f\colon X^\prime \to W$ from an affine scheme, and $W$ an open subspace of $X$. 
    Assume that $U$ satisfies the desired claim. 
    Form the fibered square 
    \begin{displaymath}
        \begin{tikzcd}
            {U^\prime} & {X^\prime} \\
            U & {W.}
            \arrow["{j^\prime}", from=1-1, to=1-2]
            \arrow["{f^\prime}"', from=1-1, to=2-1]
            \arrow["f", from=1-2, to=2-2]
            \arrow["j"', from=2-1, to=2-2]
        \end{tikzcd}
    \end{displaymath}
    By \cite[\href{https://stacks.math.columbia.edu/tag/08GW}{Tag 08GW}]{StacksProject}, there exists a distinguished triangle 
    \begin{displaymath}
        \mathcal{O}_W \to \mathbf{R}j_\ast \mathcal{O}_U \oplus \mathbf{R}f_\ast \mathcal{O}_ {X^\prime} \to \mathbf{R}(f\circ j^\prime)_\ast \mathcal{O}_{U^\prime} \to \mathcal{O}_W[1].
    \end{displaymath}
    Consider the commutative cube 
    \begin{displaymath}
        \begin{tikzcd}
            {Y^\prime_U} && {Y^\prime} & \\
            {U^\prime} && {X^\prime} \\
            & {Y_U} && {Y_W} \\
            & U && W
            \arrow["{j_Y^\prime}"{description}, from=1-1, to=1-3]
            \arrow["{g^\prime}"{description}, from=1-1, to=2-1]
            \arrow["{f^\prime_Y}"{description, pos=0.8}, from=1-1, to=3-2]
            \arrow["{g_{X^\prime}}"{description}, from=1-3, to=2-3]
            \arrow["{\widetilde{f}}"{description, pos=0.7}, from=1-3, to=3-4]
            \arrow["{j^\prime}"{description}, from=2-1, to=2-3]
            \arrow["{f^\prime}"{description, pos=0.2}, from=2-1, to=4-2]
            \arrow["f"{description, pos=0.2}, from=2-3, to=4-4]
            \arrow["{\widetilde{j}}"{description}, from=3-2, to=3-4]
            \arrow["{g_U}"{description}, from=3-2, to=4-2]
            \arrow["{g_W}"{description}, from=3-4, to=4-4]
            \arrow["j"{description}, from=4-2, to=4-4]
        \end{tikzcd}
    \end{displaymath}
    whose faces are fibered. 
    By \cite[\href{https://stacks.math.columbia.edu/tag/03Y9}{Tag 03Y9}]{StacksProject}, $g_U$, $g_{X^\prime}$, and $g^\prime$ are fppf covers.
    Since $U$ satisfies property $P$, it follows that 
    $\mathcal{O}_U \in \langle \mathbf{R}(g_U)_\ast D_{\operatorname{qc}}(Y_U)\rangle$. 
    Moreover, \Cref{lem:affine_descendability} implies $\mathcal{O}_{X^\prime} \in \langle \mathbf{R}(g_{X^\prime})_\ast D_{\operatorname{qc}}(Y^\prime)\rangle$ and $\mathcal{O}_{U^\prime} \in \langle \mathbf{R}(g^\prime)_\ast D_{\operatorname{qc}}(Y_U^\prime)\rangle$. 
    See \Cref{lem:descent}.
    Consequently, we obtain 
    \begin{displaymath}
        \begin{aligned}
            \mathcal{O}_W 
            & \in \langle \mathbf{R}j_\ast \mathcal{O}_U \oplus \mathbf{R}f_\ast \mathcal{O}_ {X^\prime} \oplus \mathbf{R}(f\circ j^\prime)_\ast \mathcal{O}_{U^\prime} \rangle
            \\&\subseteq \langle \mathbf{R}(j \circ g_U)_\ast D_{\operatorname{qc}}(Y_U) \oplus \mathbf{R}(f \circ g_{X^\prime})_\ast D_{\operatorname{qc}}(Y^\prime) \oplus \mathbf{R}(j \circ f^\prime \circ g^\prime)_\ast D_{\operatorname{qc}}(Y_U^\prime) \rangle
            \\& \subseteq \langle \mathbf{R}(g_W \circ \widetilde{j})_\ast D_{\operatorname{qc}}(Y_U) \oplus \mathbf{R}(g_W \circ \widetilde{f})_\ast D_{\operatorname{qc}}(Y^\prime) \oplus \mathbf{R}(g_W \circ \widetilde{j} \circ f^\prime_Y)_\ast D_{\operatorname{qc}}(Y_U^\prime) \rangle
            \\&\subseteq \langle \mathbf{R}(g_W)_\ast D_{\operatorname{qc}}(Y_W) \rangle.
        \end{aligned}
    \end{displaymath}
    By \Cref{lem:descent}, this completes the proof.
\end{proof}

\section{Localizing}
\label{sec:localizing}

\begin{lemma}
    \label{lem:inverse_image_localizing}
    Let $F\colon \mathcal{T} \to \mathcal{S}$ be an exact functor of triangulated categories admitting small coproducts. 
    Suppose $\mathcal{A}\subseteq \mathcal{T}$, that $F$ preserves small coproducts, and $\mathcal{B}\subseteq \mathcal{S}$ is localizing. 
    Then $F^{-1}\mathcal{B}\subseteq \mathcal{T}$ is localizing and $F(\overline{\langle \mathcal{A} \rangle}) \subseteq \overline{\langle F(\mathcal{A}) \rangle}$.
\end{lemma}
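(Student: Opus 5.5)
The argument splits into the two claims, and the second is deduced from the first.

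For the first claim, set $F^{-1}\mathcal{B} := \{T \in \mathcal{T} \mid F(T) \in \mathcal{B}\}$ as a strictly full subcategory of $\mathcal{T}$. I check the axioms of a localizing subcategory one at a time, each time using a property of $F$ together with the matching closure property of $\mathcal{B}$.
\begin{itemize}
    \item \emph{Zero object.} It lies in $F^{-1}\mathcal{B}$ because $F(0)\cong 0$, since $F$ is exact and hence additive.
    \item \emph{Shifts.} $F^{-1}\mathcal{B}$ is closed under $[\pm 1]$ because $F$ commutes with shifts up to natural isomorphism and $\mathcal{B}$ is closed under shifts.
    \item \emph{Triangles.} If $A\to B\to C\to A[1]$ is distinguished in $\mathcal{T}$ with $A,B\in F^{-1}\mathcal{B}$, then $F(A)\to F(B)\to F(C)\to F(A)[1]$ is distinguished in $\mathcal{S}$. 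Since $\mathcal{B}$ is triangulated, $F(C)\in\mathcal{B}$.
    \item \emph{Coproducts.} If $\{T_i\}$ is a small family in $F^{-1}\mathcal{B}$, then $F(\bigoplus T_i)\cong\bigoplus F(T_i)$ because $F$ preserves small coproducts, and this lies in $\mathcal{B}$ because $\mathcal{B}$ is closed under small coproducts.
    \item \emph{Isomorphisms.} Closure under isomorphism is automatic, since $\mathcal{B}$ is strictly full.
\end{itemize}
Hence $F^{-1}\mathcal{B}$ is a triangulated subcategory of $\mathcal{T}$ closed under small coproducts, so it is localizing.

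For the second claim, apply the first part with $\mathcal{B} := \overline{\langle F(\mathcal{A}) \rangle}$, which is localizing in $\mathcal{S}$ by definition. Then $F^{-1}\mathcal{B}$ is a localizing subcategory of $\mathcal{T}$. It contains $\mathcal{A}$, since $F(a)\in F(\mathcal{A})\subseteq\mathcal{B}$ for every $a\in\mathcal{A}$. By minimality of $\overline{\langle \mathcal{A}\rangle}$ among localizing subcategories containing $\mathcal{A}$, we get $\overline{\langle\mathcal{A}\rangle}\subseteq F^{-1}\mathcal{B}$. Unwinding the definition of $F^{-1}\mathcal{B}$, this says exactly $F(\overline{\langle\mathcal{A}\rangle})\subseteq\overline{\langle F(\mathcal{A})\rangle}$.

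No step is a real obstacle. The only points needing care are that $F^{-1}\mathcal{B}$ must be taken strictly full, so that it is closed under isomorphisms, and that exactness of $F$ supplies the natural isomorphism $F\circ[1]\cong[1]\circ F$ used for shift-closure and for the triangle axiom. Closure under direct summands needs no separate check, since by \cite[Remark 1.4]{Bokstedt/Neeman:1993} it follows from closure under coproducts.
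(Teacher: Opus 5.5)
Your proposal is correct and follows essentially the same argument as the paper. The paper likewise checks that $F^{-1}\mathcal{B}$ is closed under shifts, triangles and small coproducts, then applies this to $\mathcal{B}=\overline{\langle F(\mathcal{A})\rangle}$ and uses minimality of $\overline{\langle \mathcal{A}\rangle}$; the only cosmetic difference is that you verify closure under cones, whereas the paper verifies closure under extensions (the middle term of a triangle).
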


\begin{proof}
    This is well-known, but we include it for convenience. See e.g.\ \cite[Lemma 1.1]{Hall/Rydh:2017a}. 
    We prove the first claim.
    Recall that $F^{-1}\mathcal{B}\subseteq \mathcal{T}$ denotes the strictly full subcategory of $E\in \mathcal{T}$ such that $F(E)\in \mathcal{B}$. 
    If $E\in F^{-1}\mathcal{B}$ and $n\in \mathbb{Z}$, then $F(E[n])\cong F(E)[n]\in \mathcal{B}$ because $\mathcal{B}$ is closed under shifts, and so $E[n]\in F^{-1}\mathcal{B}$. 
    Moreover, for any distinguished triangle
    \begin{displaymath}
        A \to E \to B \to A[1]
    \end{displaymath}
    where $A,B\in F^{-1}\mathcal{B}$, it follows that 
    \begin{displaymath}
        F(A) \to F(E) \to F(B) \to F(A[1])
    \end{displaymath}
    is a distinguished triangle of objects in $\mathcal{B}$. 
    Hence, $E\in F^{-1} \mathcal{B}$, which shows that $F^{-1}\mathcal{B}$ is a triangulated subcategory of $\mathcal{T}$. 
    Finally, if $I$ is a set and $E_i\in F^{-1}\mathcal{B}$ for $i\in I$, then $F(\oplus_{i\in I} E_i)\cong \oplus_{i\in I} F(E_i)\in\mathcal{B}$ because $\mathcal{B}$ is closed under small coproducts and $F$ preserves small coproducts. 
    This implies that $\oplus_{i\in I} E_i\in F^{-1}\mathcal{B}$.

    We prove the second claim. 
    Since $F(\mathcal{A})\subseteq F(\overline{\langle \mathcal{A} \rangle})$, it follows that $\overline{\langle F(\mathcal{A}) \rangle} \subseteq \overline{ \langle F(\overline{\langle \mathcal{A} \rangle}) \rangle}$. 
    By the first claim, we know that $F^{-1}\overline{\langle F(\mathcal{A}) \rangle}$ is localizing. 
    As $\mathcal{A}\subseteq F^{-1}\overline{\langle F(\mathcal{A}) \rangle}$, we obtain $\overline{\langle \mathcal{A} \rangle} \subseteq F^{-1}\overline{\langle F(\mathcal{A}) \rangle}$. 
    Therefore, the desired claim follows.
\end{proof}

\begin{lemma}
    \label{lem:independent_of_representative}
    Let $X$ be a quasi-compact quasi-separated algebraic space, $S\subseteq |X|$, and fix a representative $t_s \colon \operatorname{Spec}(k)\to X$ for each $s\in S$. 
    Then
    \begin{displaymath}
        \overline{\langle \{ \mathbf{R}(t_s)_\ast \mathcal{O}_{\operatorname{Spec}(k)} \}_{s\in S} \rangle}_{\otimes} = \overline{\langle \{ \mathbf{R}(t_s)_\ast \mathcal{O}_{\operatorname{Spec}(k)} \}_{s\in S}  \rangle}.
    \end{displaymath}
    Moreover, this localizing subcategory is independent of the choice of representatives for $s\in S$.
\end{lemma}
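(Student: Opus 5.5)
We prove the two claims separately. Throughout, write $\kappa_s := \mathbf{R}(t_s)_\ast \mathcal{O}_{\operatorname{Spec}(k)}$ and $\mathcal{L} := \overline{\langle \{\kappa_s\}_{s\in S}\rangle}$.

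\emph{The two localizing subcategories agree.} The inclusion $\mathcal{L}\subseteq \overline{\langle \{\kappa_s\}\rangle}_\otimes$ is immediate. For the reverse inclusion it suffices to show that $\mathcal{L}$ is closed under $A\otimes^{\mathbf{L}}-$ for every $A\in D_{\operatorname{qc}}(X)$. Fix such an $A$. The functor $A\otimes^{\mathbf{L}}-$ is exact and preserves small coproducts, so by \Cref{lem:inverse_image_localizing} the preimage $(A\otimes^{\mathbf{L}}-)^{-1}\mathcal{L}$ is localizing. It is therefore enough to check that each $\kappa_s$ lies in it, that is, $A\otimes^{\mathbf{L}}\kappa_s\in\mathcal{L}$. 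Writing $t=t_s$, the projection formula gives
\[
A\otimes^{\mathbf{L}}\mathbf{R}t_\ast\mathcal{O}_{\operatorname{Spec}(k)}\cong \mathbf{R}t_\ast \mathbf{L}t^\ast A .
\]
Since $k$ is a field, $\mathbf{L}t^\ast A\in D(k)$ is isomorphic to $\bigoplus_n H^n(\mathbf{L}t^\ast A)[-n]$, a direct sum of shifts of copies of $k$. The functor $\mathbf{R}t_\ast$ preserves coproducts on $D_{\operatorname{qc}}$, because $t$ is quasi-compact and quasi-separated ($X$ is qcqs and $\operatorname{Spec}(k)$ is affine). Hence $\mathbf{R}t_\ast\mathbf{L}t^\ast A$ is a coproduct of shifts of $\kappa_s$, and so it lies in $\mathcal{L}$.

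\emph{Independence of representatives.} Let $t\colon\operatorname{Spec}(k)\to X$ and $t'\colon\operatorname{Spec}(k')\to X$ both represent $s$. By the definition of $|X|$ there is a common refinement: a field $\Omega$ with maps $\operatorname{Spec}(\Omega)\to\operatorname{Spec}(k)$ and $\operatorname{Spec}(\Omega)\to\operatorname{Spec}(k')$ whose two composites $u\colon \operatorname{Spec}(\Omega)\to X$ agree. It therefore suffices to compare $t$ with $u=t\circ p$, where $p\colon\operatorname{Spec}(\Omega)\to\operatorname{Spec}(k)$ comes from a field extension $k\subseteq\Omega$, and to show that $\overline{\langle\mathbf{R}t_\ast k\rangle}=\overline{\langle\mathbf{R}u_\ast\Omega\rangle}$.
\begin{itemize}
\item As a $k$-vector space, $\Omega\cong k^{\oplus I}$. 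Hence $\mathbf{R}u_\ast\Omega\cong\mathbf{R}t_\ast(p_\ast\Omega)\cong\bigoplus_I\mathbf{R}t_\ast k$, using again that $\mathbf{R}t_\ast$ preserves coproducts.
\item Conversely, $k\to\Omega$ splits as a map of $k$-vector spaces, so $k$ is a direct summand of $p_\ast\Omega$. Thus $\mathbf{R}t_\ast k$ is a summand of $\mathbf{R}u_\ast\Omega$, and localizing subcategories are closed under summands.
\end{itemize}
The two generators therefore generate the same localizing subcategory. Running this comparison pointwise over $s\in S$ shows that $\mathcal{L}$ does not depend on the chosen representatives.

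The main obstacle is the common-refinement step. One must recall carefully from the Stacks Project how equivalence of points of an algebraic space is defined. If the definition only provides a chain of such refinements rather than a single one, the comparison must be applied along each step of the chain. In addition, the coproduct-preservation of $\mathbf{R}t_\ast$ should be justified with a precise Stacks reference for qcqs morphisms of algebraic spaces.
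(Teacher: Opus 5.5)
Your proof is correct and takes essentially the same route as the paper. It shows closure under $A\otimes^{\mathbf{L}}-$ via the projection formula and \Cref{lem:inverse_image_localizing}, and it gets independence of representatives from a single common field refinement $\operatorname{Spec}(\Omega)$, which is exactly what the paper uses. The one difference is cosmetic: you split objects of $D(k)$ and the extension $k\subseteq\Omega$ explicitly, whereas the paper pushes forward the equality $D_{\operatorname{qc}}(\operatorname{Spec}(k))=\overline{\langle\mathcal{O}_{\operatorname{Spec}(k)}\rangle}$ along $\mathbf{R}(t_s)_\ast$.
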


\begin{proof}
    We prove the first claim. 
    It is easy to see that 
    \begin{displaymath}
        \overline{\langle \{ \mathbf{R}(t_s)_\ast \mathcal{O}_{\operatorname{Spec}(k)} \}_{s\in S} \rangle} \subseteq \overline{\langle \{ \mathbf{R}(t_s)_\ast \mathcal{O}_{\operatorname{Spec}(k)} \}_{s\in S}  \rangle}_{\otimes}.
    \end{displaymath}
    To prove the reverse inclusion, we show that $\overline{\langle \{ \mathbf{R}(t_s)_\ast \mathcal{O}_{\operatorname{Spec}(k)} \}_{s\in S} \rangle}$ is a $\otimes$-subcategory. 
    Let $E\in D_{\operatorname{qc}}(X)$. 
    Note that $E\otimes^{\mathbf{L}}(-)$ is a small coproduct preserving endofunctor on $D_{\operatorname{qc}}(X)$. 
    By the projection formula and \Cref{lem:inverse_image_localizing}, 
    \begin{displaymath}
        \begin{aligned}
            E \otimes^{\mathbf{L}} \overline{\langle \{ \mathbf{R}(t_s)_\ast \mathcal{O}_{\operatorname{Spec}(k)} \}_{s\in S} \rangle}
            &\subseteq \overline{\langle E \otimes^{\mathbf{L}}  \{ \mathbf{R}(t_s)_\ast \mathcal{O}_{\operatorname{Spec}(k)} \}_{s\in S} \rangle}
            \\&\subseteq \overline{\langle \{ E \otimes^{\mathbf{L}}  \mathbf{R}(t_s)_\ast \mathcal{O}_{\operatorname{Spec}(k)} \}_{s\in S} \rangle}
            \\&\subseteq \overline{\langle \{ \mathbf{R}(t_s)_\ast \mathbf{L}t_s^\ast E \}_{s\in S} \rangle}.
        \end{aligned}
    \end{displaymath}
    Moreover, for each $s\in S$, $D_{\operatorname{qc}}(\operatorname{Spec}(k)) = \overline{\langle \mathcal{O}_{\operatorname{Spec}(k)} \rangle}$. 
    By \Cref{lem:inverse_image_localizing}, 
    \begin{displaymath}
        \begin{aligned}
            \overline{\langle \mathbf{R}(t_s)_\ast \mathbf{L}t_s^\ast E \rangle}
            \subseteq \overline{\langle \mathbf{R}(t_s)_\ast \mathcal{O}_{\operatorname{Spec}(k)} \rangle}.
        \end{aligned}
    \end{displaymath}
    Consequently, it follows that 
    \begin{displaymath}
        \begin{aligned}
            E \otimes^{\mathbf{L}} \overline{\langle \{ \mathbf{R}(t_s)_\ast \mathcal{O}_{\operatorname{Spec}(k)} \}_{s\in S} \rangle}
            &\subseteq\overline{\langle \{ \mathbf{R}(t_s)_\ast \mathbf{L}t_s^\ast E \}_{s\in S} \rangle}
            \\&\subseteq \overline{\langle \{ \mathbf{R}(t_s)_\ast \mathcal{O}_{\operatorname{Spec}(k)} \}_{s\in S} \rangle}.
        \end{aligned}
    \end{displaymath}

    Next we check the second claim. 
    Choose any collection of representatives $v_s \colon \operatorname{Spec}(k^\prime) \to X$ for each $s\in S$. 
    Since $t_s$ and $v_s$ represent $s$, there exist a field $\ell$ and commutative diagram 
    \begin{displaymath}
        \begin{tikzcd}
            {\operatorname{Spec}(\ell)} & {\operatorname{Spec}(k^\prime)} \\
            {\operatorname{Spec}(k)} & {X.}
            \arrow["{t^\prime_s}", from=1-1, to=1-2]
            \arrow["{v_s^\prime}"', from=1-1, to=2-1]
            \arrow["{v_s}", from=1-2, to=2-2]
            \arrow["{t_s}"', from=2-1, to=2-2]
        \end{tikzcd}
    \end{displaymath}
    Note that $t^\prime_s$ and $v^\prime_s$ are faithfully flat. 
    Hence, the derived pushforward of $\mathcal{O}_{\operatorname{Spec}(\ell)}$ is nonzero along each morphism. 
    In particular, $\mathbf{R}(t^\prime_s)_\ast \mathcal{O}_{\operatorname{Spec}(\ell)}$ is a nonzero coproduct of shifts of $\mathcal{O}_{\operatorname{Spec}(k^\prime)}$, and similarly for $\mathbf{R} (v^\prime_s)_\ast \mathcal{O}_{\operatorname{Spec}(\ell)}$. 
    Consequently, 
    \begin{displaymath}
        \overline{\langle \mathbf{R}(t^\prime_s)_\ast \mathcal{O}_{\operatorname{Spec}(\ell)} \rangle} 
        = D_{\operatorname{qc}}(\operatorname{Spec}(k^\prime)) 
        = \overline{\langle \mathcal{O}_{\operatorname{Spec}(k^\prime)} \rangle}.
    \end{displaymath}
    A similar statement holds for $\mathbf{R} (v^\prime_s)_\ast \mathcal{O}_{\operatorname{Spec}(\ell)}$ in $D_{\operatorname{qc}}(\operatorname{Spec}(k))$. 
    By \Cref{lem:inverse_image_localizing}, we have that 
    \begin{displaymath}
        \begin{aligned}
            \mathbf{R} & (v_s)_\ast \overline{\langle \mathbf{R}(t^\prime_s)_\ast \mathcal{O}_{\operatorname{Spec}(\ell)} \rangle}
            \subseteq \overline{\langle \mathbf{R} (v_s)_\ast \mathcal{O}_{\operatorname{Spec}(k^\prime)} \rangle}, 
            \\& \mathbf{R} (v_s)_\ast \overline{\langle \mathcal{O}_{\operatorname{Spec}(k^\prime)} \rangle}
            \subseteq \overline{\langle \mathbf{R} (v_s)_\ast \mathbf{R}(t^\prime_s)_\ast \mathcal{O}_{\operatorname{Spec}(\ell)}\rangle}.
        \end{aligned}
    \end{displaymath}
    This implies that
    \begin{displaymath}
        \overline{\langle \mathbf{R} (v_s)_\ast \mathcal{O}_{\operatorname{Spec}(k^\prime)} \rangle}
        =\overline{\langle \mathbf{R} (v_s)_\ast \mathbf{R}(t^\prime_s)_\ast \mathcal{O}_{\operatorname{Spec}(\ell)}\rangle}.
    \end{displaymath}
    A similar statement holds for $\mathbf{R} (v^\prime_s)_\ast \mathcal{O}_{\operatorname{Spec}(\ell)}$. 
    Therefore, it follows that
    \begin{displaymath}
        \overline{\langle \mathbf{R} (v_s)_\ast \mathcal{O}_{\operatorname{Spec}(k^\prime)} \rangle}
        = \overline{\langle \mathbf{R} (v_s)_\ast \mathbf{R}(t^\prime_s)_\ast \mathcal{O}_{\operatorname{Spec}(\ell)}\rangle}
        = \overline{\langle \mathbf{R} (t_s)_\ast \mathcal{O}_{\operatorname{Spec}(k)} \rangle},
    \end{displaymath}
    which completes the proof.
\end{proof}

\begin{lemma}
    \label{lem:tensor_to_normal}
    Let $X$ be a quasi-compact quasi-separated algebraic space. 
    Suppose $G$ is a compact generator for $D_{\operatorname{qc}}(X)$. 
    If $\mathcal{S}\subseteq D_{\operatorname{qc}}(X)$ is a set, then $\overline{\langle \mathcal{S} \rangle}_{\otimes} = \overline{\langle G \otimes^{\mathbf{L}} \mathcal{S} \rangle}$.
\end{lemma}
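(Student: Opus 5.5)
We prove the two inclusions separately. Write $\mathcal{L} := \overline{\langle G \otimes^{\mathbf{L}} \mathcal{S} \rangle}$.

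The inclusion $\mathcal{L} \subseteq \overline{\langle \mathcal{S} \rangle}_{\otimes}$ is immediate. Each $G \otimes^{\mathbf{L}} E$ with $E \in \mathcal{S}$ lies in $\overline{\langle \mathcal{S} \rangle}_{\otimes}$, since that subcategory is closed under the tensor action. It is also localizing, so it contains the localizing subcategory generated by these objects.

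For the reverse inclusion we show two things: that $\mathcal{S} \subseteq \mathcal{L}$, and that $\mathcal{L}$ is closed under $A \otimes^{\mathbf{L}} (-)$ for every $A \in D_{\operatorname{qc}}(X)$. Then $\mathcal{L}$ is a $\otimes$-localizing subcategory containing $\mathcal{S}$, which gives $\overline{\langle \mathcal{S} \rangle}_{\otimes} \subseteq \mathcal{L}$.

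\emph{Closure under tensoring.} Fix $A \in D_{\operatorname{qc}}(X)$. Since $G$ is a compact generator, $D_{\operatorname{qc}}(X) = \overline{\langle G \rangle}$, so $A \in \overline{\langle G \rangle}$.
\begin{itemize}
\item For fixed $E \in \mathcal{S}$, apply \Cref{lem:inverse_image_localizing} to the coproduct preserving exact functor $(-) \otimes^{\mathbf{L}} E$. This gives $A \otimes^{\mathbf{L}} E \in \overline{\langle G \otimes^{\mathbf{L}} E \rangle} \subseteq \mathcal{L}$.
\item Next let $\mathcal{C}$ be the subcategory of $F \in D_{\operatorname{qc}}(X)$ with $A \otimes^{\mathbf{L}} F \in \mathcal{L}$. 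By \Cref{lem:inverse_image_localizing} applied to $A \otimes^{\mathbf{L}} (-)$, $\mathcal{C}$ is localizing.
\item $\mathcal{C}$ contains each generator $G \otimes^{\mathbf{L}} E$. Indeed, $A \otimes^{\mathbf{L}} G \otimes^{\mathbf{L}} E \cong (A \otimes^{\mathbf{L}} G) \otimes^{\mathbf{L}} E$, and the previous step applies with $A$ replaced by $A \otimes^{\mathbf{L}} G$.
\end{itemize}
Hence $\mathcal{L} \subseteq \mathcal{C}$, which is exactly the required closure.

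\emph{Containment of $\mathcal{S}$.} Take $E \in \mathcal{S}$. Since $\mathcal{O}_X \in \overline{\langle G \rangle}$, \Cref{lem:inverse_image_localizing} applied to $(-) \otimes^{\mathbf{L}} E$ gives $E \cong \mathcal{O}_X \otimes^{\mathbf{L}} E \in \overline{\langle G \otimes^{\mathbf{L}} E \rangle} \subseteq \mathcal{L}$.

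The only real input is the fact $D_{\operatorname{qc}}(X) = \overline{\langle G \rangle}$ for a compact generator $G$. This is the standard Neeman equivalence between compact generation and the localizing subcategory being everything, recalled in the preliminaries; everything else is bookkeeping with \Cref{lem:inverse_image_localizing}. No obstacle is expected beyond tracking associativity and symmetry isomorphisms of $\otimes^{\mathbf{L}}$.
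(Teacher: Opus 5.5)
Your proof is correct, and it takes a cleaner route than the paper's.

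The paper proves tensor-closure of $\overline{\langle G \otimes^{\mathbf{L}} \mathcal{S} \rangle}$ in three stages:
\begin{enumerate}
\item It writes an arbitrary object as a homotopy colimit of objects $P_n \in \overline{\langle G \rangle}_n$, using Stacks Tag 09SN.
\item It shows $\overline{\langle G \rangle}_n \otimes^{\mathbf{L}} \overline{\langle G \otimes^{\mathbf{L}} \mathcal{S} \rangle} \subseteq \overline{\langle G \otimes^{\mathbf{L}} \mathcal{S} \rangle}$ by induction on the level $n$. The base case needs $G[i] \otimes^{\mathbf{L}} G \in \langle G \rangle$, which rests on $G \otimes^{\mathbf{L}} G$ being perfect and on $G$ classically generating $\operatorname{Perf}(X)$.
\item It gets $\mathcal{S} \subseteq \overline{\langle G \otimes^{\mathbf{L}} \mathcal{S} \rangle}$ from $\mathcal{O}_X \in \langle G \rangle$.
\end{enumerate}

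You use only the equality $D_{\operatorname{qc}}(X) = \overline{\langle G \rangle}$ together with \Cref{lem:inverse_image_localizing}, applied twice.
\begin{itemize}
\item First, with the functor $(-) \otimes^{\mathbf{L}} E$, you get $A \otimes^{\mathbf{L}} E \in \overline{\langle G \otimes^{\mathbf{L}} E \rangle}$ for every $A$.
\item Second, the preimage $(A \otimes^{\mathbf{L}} -)^{-1}\overline{\langle G \otimes^{\mathbf{L}} \mathcal{S} \rangle}$ is localizing. It contains the generators $G \otimes^{\mathbf{L}} E$, because you can rebracket them as $(A \otimes^{\mathbf{L}} G) \otimes^{\mathbf{L}} E$ and apply the first step.
\end{itemize}

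This removes the level-wise induction and the appeal to Thomason--Neeman classical generation. Your argument never uses compactness of $G$ except to obtain $\overline{\langle G \rangle} = D_{\operatorname{qc}}(X)$. So it proves the statement for any object that generates as a localizing subcategory, in any tensor triangulated category whose tensor is exact and coproduct-preserving in each variable. The paper's finer, level-by-level information is not used anywhere later; the lemma is only invoked as stated in \Cref{lem:coaisle_is_localizing}. Your version therefore loses nothing.
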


\begin{proof}
    It is easy to see that $\overline{\langle G \otimes^{\mathbf{L}} \mathcal{S} \rangle} \subseteq \overline{\langle \mathcal{S} \rangle}_{\otimes}$. We prove the reverse inclusion. 
    We show that $\overline{\langle G \otimes^{\mathbf{L}} \mathcal{S} \rangle}$ is an $\otimes$-subcategory. 
    By \cite[\href{https://stacks.math.columbia.edu/tag/09SN}{Tag 09SN}]{StacksProject}, every $E\in D_{\operatorname{qc}}(X)$ admits a distinguished triangle
    \begin{displaymath}
        \oplus_{n\geq 1} P_n \to \oplus_{n\geq 1} P_n \to E \to (\oplus_{n\geq 1} P_n)[1]
    \end{displaymath}
    where $P_n \in \overline{\langle G \rangle}_n$. 
    To show that $E\otimes^{\mathbf{L}} \overline{\langle G \otimes^{\mathbf{L}} \mathcal{S} \rangle} \subseteq \overline{\langle G \otimes^{\mathbf{L}} \mathcal{S} \rangle}$, it suffices to prove that $\oplus_{n\geq 1} P_n \otimes^{\mathbf{L}} \overline{\langle G \otimes^{\mathbf{L}} \mathcal{S} \rangle} \subseteq \overline{\langle G \otimes^{\mathbf{L}} \mathcal{S} \rangle}$. 
    
    As localizing subcategories are closed under small coproducts, we can reduce to proving that each $P_n \otimes^{\mathbf{L}} \overline{\langle G \otimes^{\mathbf{L}} \mathcal{S} \rangle} \subseteq \overline{\langle G \otimes^{\mathbf{L}} \mathcal{S} \rangle}$. 
    However, $P_n \in \overline{\langle G \rangle}_n$, and so it suffices to verify that $\overline{\langle G \rangle}_n \otimes^{\mathbf{L}} \overline{\langle G \otimes^{\mathbf{L}} \mathcal{S} \rangle} \subseteq \overline{\langle G \otimes^{\mathbf{L}} \mathcal{S} \rangle}$. 
    The claim now follows by induction on $n$. 
    Indeed, let $A\in \overline{\langle G \rangle}_1$. 
    Then $A$ is a direct summand of a small coproduct of shifts of $G$, say $\oplus_{i\in I} G[i]$. 
    So, for any $S\in \mathcal{S}$,
    \begin{displaymath}
        (\bigoplus_{i\in I} G[i]) \otimes^{\mathbf{L}} (G \otimes^{\mathbf{L}} S) \cong \bigoplus_{i\in I} (G[i] \otimes^{\mathbf{L}} G \otimes^{\mathbf{L}} S).
    \end{displaymath}
    Note that $G[i] \otimes^{\mathbf{L}} G\in \langle G \rangle$.
    Indeed, $G[i] \otimes^{\mathbf{L}} G$ is compact and $G$ is a compact generator, so it is a classifical generator for $\operatorname{Perf}(X)$ \cite[\href{https://stacks.math.columbia.edu/tag/09M8}{Tags 09M8}, \href{https://stacks.math.columbia.edu/tag/09IU}{09IU}, \href{https://stacks.math.columbia.edu/tag/09SR}{09SR}]{StacksProject}.
    Then it follows that each $G[i] \otimes^{\mathbf{L}} G \otimes^{\mathbf{L}} S\in \overline{\langle G \otimes^{\mathbf{L}} \mathcal{S} \rangle}$. 
    By \Cref{lem:inverse_image_localizing}, 
    \begin{displaymath}
        (\bigoplus_{i\in I} G[i]) \otimes^{\mathbf{L}} \overline{\langle G \otimes^{\mathbf{L}} \mathcal{S} \rangle} \subseteq \overline{\langle (\bigoplus_{i\in I} G[i]) \otimes^{\mathbf{L}} G \otimes^{\mathbf{L}} \mathcal{S} \rangle} \subseteq \overline{\langle G \otimes^{\mathbf{L}} \mathcal{S} \rangle}.
    \end{displaymath} 

    Now, let $E\in \overline{\langle G \rangle}_{n+1}$. 
    There exists a distinguished triangle
    \begin{displaymath}
        A \to E \oplus E^\prime \to B \to A[1]
    \end{displaymath}
    where $A\in \overline{\langle G \rangle}_n$ and $B\in \overline{\langle G \rangle}_1$. 
    Choose any $Q\in \overline{\langle G \otimes^{\mathbf{L}} \mathcal{S} \rangle}$. 
    Tensoring with $Q$ gives a distinguished triangle
    \begin{displaymath}
        A \otimes^{\mathbf{L}} Q \to E \otimes^{\mathbf{L}} Q  \oplus E^\prime  \otimes^{\mathbf{L}} Q  \to B \otimes^{\mathbf{L}} Q \to A[1] \otimes^{\mathbf{L}} Q 
    \end{displaymath}
    By the induction hypothesis, it follows that $A \otimes^{\mathbf{L}} Q,B \otimes^{\mathbf{L}} Q \in \overline{\langle G \otimes^{\mathbf{L}} \mathcal{S} \rangle}$. 
    Hence, $E \otimes^{\mathbf{L}} Q \in \overline{\langle G \otimes^{\mathbf{L}} \mathcal{S} \rangle}$.

    Since $\mathcal{O}_X \in \langle G \rangle$, it follows that for each $E\in \mathcal{S}$,
    \begin{displaymath}
        E \in \langle G \rangle \otimes^{\mathbf{L}} E \subseteq  \langle G \otimes^{\mathbf{L}} E \rangle \subseteq \overline{\langle G \otimes^{\mathbf{L}} E \rangle}.
    \end{displaymath}
    Thus, $\overline{\langle G \otimes^{\mathbf{L}} \mathcal{S} \rangle}$ is a $\otimes$-subcategory which contains $\mathcal{S}$.
\end{proof}

\begin{lemma}
    \label{lem:subcategory_to_object_generation}
    Let $\mathcal{T}$ be a triangulated category and $\mathcal{S}\subseteq\mathcal{T}$ be a subcategory. 
    For each $E\in\langle \mathcal{S} \rangle_n$, there exists an $S\in\langle \mathcal{S} \rangle_1$ with $E\in \langle S \rangle_n$.
\end{lemma}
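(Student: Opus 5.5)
We argue by induction on $n$, using the inductive definition of $\langle \mathcal{S} \rangle_n$ from \Cref{sec:prelim_generation}. The idea is that each level only ever uses finitely many objects of $\mathcal{S}$, and since $\langle \mathcal{S} \rangle_1 = \operatorname{add}(\mathcal{S})$ is closed under finite coproducts, those finitely many witnesses can be merged into a single object $S \in \langle \mathcal{S} \rangle_1$.

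\textbf{Base cases.} For $n=0$, $E \cong 0$ and any $S$ works; we take $S = 0$, which lies in $\operatorname{add}(\mathcal{S})$ as the empty coproduct, or as a summand. For $n=1$ we simply take $S := E$; then $E \in \operatorname{add}(E) = \langle E \rangle_1$.

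\textbf{Inductive step.} Let $E \in \langle \mathcal{S} \rangle_{n+1}$. Since $\operatorname{add}$ of a class of cones is obtained by shifts, finite coproducts and summands, $E$ is a direct summand of a finite coproduct $\bigoplus_{i=1}^m C_i[k_i]$, where each $C_i = \operatorname{cone}(\phi_i)$ for some morphism $\phi_i \colon A_i \to B_i$ with $A_i \in \langle \mathcal{S} \rangle_n$ and $B_i \in \langle \mathcal{S} \rangle_1$.
\begin{itemize}
\item By the induction hypothesis, for each $i$ there is $S_i \in \langle \mathcal{S} \rangle_1$ with $A_i \in \langle S_i \rangle_n$.
\item Set $S := \bigoplus_i S_i \oplus \bigoplus_i B_i$. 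This lies in $\langle \mathcal{S} \rangle_1$ because $\operatorname{add}(\mathcal{S})$ is closed under finite coproducts.
\item Monotonicity of the construction in the generating set (checked by an easy induction) gives $\langle S_i \rangle_n \subseteq \langle S \rangle_n$, since $S_i \in \operatorname{add}(S)$. Hence $A_i \in \langle S \rangle_n$.
\item Also $B_i \in \operatorname{add}(S) = \langle S \rangle_1$.
\item Therefore each $C_i$ is the cone of a morphism from $\langle S \rangle_n$ to $\langle S \rangle_1$. Applying $\operatorname{add}$ gives $E \in \langle S \rangle_{n+1}$.
\end{itemize}

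\textbf{Main obstacle.} The only delicate point is the bookkeeping: one must check that the objects of $\operatorname{add}$ of the cones really involve only finitely many cones, which holds since $\operatorname{add}$ uses only finite coproducts. One must also check the monotonicity $\mathcal{S}' \subseteq \operatorname{add}(\mathcal{S}'') \Rightarrow \langle \mathcal{S}' \rangle_n \subseteq \langle \mathcal{S}'' \rangle_n$. Both follow by a routine induction on $n$ directly from the definitions.
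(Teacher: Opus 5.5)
Your proof is correct and is essentially the paper's argument: induct on $n$, apply the induction hypothesis to the source $A$ of the cone, and merge the resulting witness with the target $B$ into a single object of $\langle \mathcal{S} \rangle_1$. The only difference is bookkeeping. You handle the finite coproduct of shifted cones and the monotonicity $\langle S_i \rangle_n \subseteq \langle S \rangle_n$ explicitly, whereas the paper compresses the former into a single triangle $A \to E \oplus E^\prime \to B \to A[1]$ and uses the latter tacitly.
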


\begin{proof}
    This is essentially \cite[Lemma 5.5]{DeDeyn/Lank/ManaliRahul:2025} but we rephrase it a bit. 
    There is nothing to prove when $E=0$, and so, we can assume that $E\not\cong 0$. 
    In this case, $n>0$. 
    If $n=1$, then $S=E$ satisfies the desired claim. 
    Assume the desired claim holds for all $A\in \langle \mathcal{S} \rangle_c$ with $0\leq c \leq n$. 
    Let $E\in \langle\mathcal{S}\rangle_{n+1}$. This gives us a distinguished triangle 
    \begin{displaymath}
        A \to E \oplus E^\prime \to B \to A[1]
    \end{displaymath}
    with $A\in\langle \mathcal{S} \rangle_n$ and $B\in\langle \mathcal{S} \rangle_1$. 
    By the induction hypothesis, there exists $S^\prime\in \langle\mathcal{S}\rangle_1$ with $A \in \langle S^\prime \rangle_n$. Define $S := S^\prime \oplus B$. 
    Clearly, $S\in\langle \mathcal{S} \rangle_1$, and so the distinguished triangle above shows $E\in\langle S \rangle_{n+1}$. 
    This completes the proof.
\end{proof}

\section{fppf covers}
\label{sec:fppf covers}

\begin{lemma}
    \label{lem:localizing_is_generated_by_fields}
    Let $X$ be a quasi-compact quasi-separated algebraic space. 
    Assume there exists an fppf cover $s\colon U \to X$ from a point-generated affine scheme. 
    If $\mathcal{T}$ is a $\otimes$-localizing subcategory of $D_{\operatorname{qc}}(X)$, then there exists $S\subseteq |X|$ such that $\mathcal{T} = \overline{\langle \{ \mathbf{R}(t_s)_\ast \mathcal{O}_{\operatorname{Spec}(k)} \}_{s\in S}  \rangle}$ where $t_s \colon \operatorname{Spec}(k)\to X$ is a representative for each $s\in S$. 
    In particular, $X$ is point-generated.
\end{lemma}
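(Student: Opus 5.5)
We transport $\mathcal{T}$ to $U$, use point-generation there, and come back to $X$ with descendability (\Cref{prop:fppf_descent}). Define $S\subseteq |X|$ to be the set of points $p$ for which $\mathbf{R}(t_p)_\ast\mathcal{O}_{\operatorname{Spec}(k)}\in\mathcal{T}$ for some (equivalently, by \Cref{lem:independent_of_representative}, any) representative $t_p$. Set $\mathcal{T}_S:=\overline{\langle \{\mathbf{R}(t_s)_\ast\mathcal{O}_{\operatorname{Spec}(k)}\}_{s\in S}\rangle}$. Then $\mathcal{T}_S\subseteq\mathcal{T}$ by construction, so the whole task is the reverse inclusion. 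The final clause follows by taking $\mathcal{T}=D_{\operatorname{qc}}(X)$.

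First we show that every $E\in\mathcal{T}$ lies in $\overline{\langle \mathbf{R}s_\ast(\mathbf{L}s^\ast E\otimes^{\mathbf{L}} D_{\operatorname{qc}}(U))\rangle}$. By \Cref{prop:fppf_descent} and \Cref{lem:descent}, $\mathcal{O}_X\in\langle \mathbf{R}s_\ast D_{\operatorname{qc}}(U)\rangle$. Tensor with $E$ and use the projection formula, $E\otimes^{\mathbf{L}}\mathbf{R}s_\ast F\cong\mathbf{R}s_\ast(\mathbf{L}s^\ast E\otimes^{\mathbf{L}}F)$, together with an induction on the level $n$ as in the proof of \Cref{lem:descent}. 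This gives $E\in\langle \mathbf{R}s_\ast(\mathbf{L}s^\ast E\otimes^{\mathbf{L}}F) : F\in D_{\operatorname{qc}}(U)\rangle$. Each generator $E\otimes^{\mathbf{L}}\mathbf{R}s_\ast F$ lies in $\mathcal{T}$ because $\mathcal{T}$ is a $\otimes$-ideal. So it suffices to show that every object $M\in\mathcal{T}$ of the form $\mathbf{R}s_\ast N$, with $N=\mathbf{L}s^\ast E\otimes^{\mathbf{L}}F$, lies in $\mathcal{T}_S$.

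Next we use point-generation on $U$. Since $U$ is affine and point-generated, \cite[Theorem 0.1]{Alonso/Jeremias/Loureiro:2024} (or, in the Noetherian case, Neeman) applies to $\otimes$-localizing subcategories of $D_{\operatorname{qc}}(U)$. Consider $\mathcal{T}_U:=\overline{\langle \mathbf{L}s^\ast\mathcal{T}\rangle}_\otimes$. By that classification it equals $\overline{\langle \{\kappa(u)\}_{u\in S_U}\rangle}$, where $S_U$ is the set of $u\in U$ with $\kappa(u)\in\mathcal{T}_U$. Our $N$ lies in $\mathcal{T}_U$. Pushing forward along $s$ using \Cref{lem:inverse_image_localizing}, and noting that $s\circ(\operatorname{Spec}\kappa(u)\to U)$ represents $s(u)$, we get $\mathbf{R}s_\ast N\in\overline{\langle \{\mathbf{R}(t_{s(u)})_\ast\mathcal{O}\}_{u\in S_U}\rangle}$. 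It therefore remains to show that $s(S_U)\subseteq S$.

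This is the main obstacle: if $\kappa(u)\in\overline{\langle \mathbf{L}s^\ast\mathcal{T}\rangle}_\otimes$, we must show that $\mathbf{R}(t)_\ast\mathcal{O}\in\mathcal{T}$ for $t$ representing $s(u)$. The plan is to tensor with $\kappa(u)$. We have $\kappa(u)\otimes^{\mathbf{L}}\mathbf{L}s^\ast\mathcal{T}\subseteq\mathbf{R}(i_u)_\ast\mathbf{L}(s i_u)^\ast\mathcal{T}$, where $i_u\colon \operatorname{Spec}\kappa(u)\to U$. Since $\kappa(u)\otimes^{\mathbf{L}}\kappa(u)\neq0$ and $\mathcal{T}_U$ is generated as a localizing $\otimes$-ideal by $\mathbf{L}s^\ast\mathcal{T}$, some $T\in\mathcal{T}$ has $\mathbf{L}(si_u)^\ast T\neq 0$. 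Then $T\otimes^{\mathbf{L}}\mathbf{R}(si_u)_\ast\mathcal{O}\cong\mathbf{R}(si_u)_\ast\mathbf{L}(si_u)^\ast T$ is a nonzero object of $\mathcal{T}$. Over a field, $\mathbf{L}(si_u)^\ast T$ is a nonzero sum of shifts of $\mathcal{O}_{\operatorname{Spec}\kappa(u)}$. Hence $\mathbf{R}(si_u)_\ast\mathcal{O}$ is a direct summand of this object, so it lies in $\mathcal{T}$, because localizing subcategories are closed under summands. Thus $s(u)\in S$, using \Cref{lem:independent_of_representative}. Combining the three steps gives $\mathcal{T}\subseteq\mathcal{T}_S$.
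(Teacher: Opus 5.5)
Your proof is correct. Its outer structure is the paper's: descendability places each $E\in\mathcal{T}$ in the thick closure of objects $\mathbf{R}s_\ast N$, then you classify on $U$ and push residue fields forward along $s$. The difference is which subcategory of $D_{\operatorname{qc}}(U)$ you classify, and that choice is the sole source of your ``main obstacle''.

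The paper uses $s^{-1}\mathcal{T}=\{N : \mathbf{R}s_\ast N\in\mathcal{T}\}$. It is localizing by \Cref{lem:inverse_image_localizing}, and $\otimes$-closed because $U$ is affine. The fact you record in your first step, $E\otimes^{\mathbf{L}}\mathbf{R}s_\ast F\in\mathcal{T}$, says exactly that $N=\mathbf{L}s^\ast E\otimes^{\mathbf{L}}F\in s^{-1}\mathcal{T}$. Residue fields in $s^{-1}\mathcal{T}$ push forward into $\mathcal{T}$ by definition, so the paper needs no detection argument and simply takes $S=|s|(T)$.

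In fact your $\mathcal{T}_U$ sits inside $s^{-1}\mathcal{T}$, since $\mathbf{R}s_\ast\mathbf{L}s^\ast T\cong T\otimes^{\mathbf{L}}\mathbf{R}s_\ast\mathcal{O}_U\in\mathcal{T}$. So $s(S_U)\subseteq S$ already follows in one line.

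Your detection step is nonetheless sound. It implicitly uses that the kernel of $\kappa(u)\otimes^{\mathbf{L}}-$ is $\otimes$-localizing, together with $\kappa(u)\otimes^{\mathbf{L}}\kappa(u)\neq 0$, splitting over a field, and closure under summands. It proves something the paper never isolates: on any quasi-compact quasi-separated algebraic space, with no point-generation hypothesis, a $\otimes$-localizing $\mathcal{T}$ contains $\mathbf{R}t_\ast\mathcal{O}_{\operatorname{Spec}(k)}$ whenever $\mathbf{L}t^\ast T\neq 0$ for some $T\in\mathcal{T}$.

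Combined with your intrinsic choice $S=\phi(\mathcal{T})$, this gives $\mathcal{T}=\theta(\phi(\mathcal{T}))$ directly. The paper only extracts that later, in the proof of \Cref{thm:point_generated_classification}. The paper's route is shorter because the preimage makes compatibility with $\mathcal{T}$ tautological.
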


\begin{proof}
    By \Cref{prop:fppf_descent}, there exists an $n\geq 0$ such that $\mathcal{O}_{X} \in \langle \mathbf{R}s_\ast D_{\operatorname{qc}} (U) \rangle_n$. 
    By \Cref{lem:subcategory_to_object_generation}, there exists $E\in D_{\operatorname{qc}}(U)$ such that $\mathcal{O}_{X} \in \langle \mathbf{R}s_\ast E \rangle_n$. 
    Indeed, there exists $E^\prime \in \langle \mathbf{R}s_\ast D_{\operatorname{qc}} (U) \rangle_1$ such that $\mathcal{O}_{X} \in \langle E^\prime \rangle_n$.
    However, $E^\prime$ is a direct summand of a finite coproduct of the form $\oplus_{i\in I} \mathbf{R}s_\ast E_i$.
    Taking $E:= \oplus_{i\in I} E_i$ and using that $\mathbf{R}s_\ast$ commutes with small coproducts shows that $\mathcal{O}_{X} \in \langle \mathbf{R}s_\ast E \rangle_n$. 
    
    Fix a $\otimes$-localizing subcategory $\mathcal{T}\subseteq D_{\operatorname{qc}} (X)$. 
    By \Cref{lem:inverse_image_localizing}, $s^{-1}\mathcal{T}$ is localizing in $D_{\operatorname{qc}}(U)$. 
    Since $U$ is affine, $s^{-1} \mathcal{T}$ is $\otimes$-localizing because every localizing subcategory is $\otimes$-closed (e.g.\ use that $\mathcal{O}_U$ is a compact generator). 
    Denote by $h_q \colon \operatorname{Spec}(\kappa(q))\to U$ the natural morphisms for each $q\in |U|$. 
    By \cite[\href{https://stacks.math.columbia.edu/tag/03BV}{Tags 03BV} \& \href{https://stacks.math.columbia.edu/tag/03BU}{03BU}]{StacksProject}, the points of $U$ (viewed as a topological space) are in one-to-one correspondence with those of $|U|$. 
    By \cite[Theorem 0.1]{Alonso/Jeremias/Loureiro:2024} and \cite[\href{https://stacks.math.columbia.edu/tag/071Q}{Tag 071Q}]{StacksProject}, there exists $T\subseteq |U|$ such that 
    \begin{displaymath}
        \overline{\langle \{ \mathbf{R} (h_q)_\ast \mathcal{O}_{\operatorname{Spec}(\kappa(q))} \}_{q\in T} \rangle} = s^{-1} \mathcal{T}.
    \end{displaymath}
    
    Note that the construction of $s^{-1} \mathcal{T}$ implies 
    \begin{displaymath}
        \overline{\langle \{ \mathbf{R} (s\circ h_q)_\ast \mathcal{O}_{\operatorname{Spec}(\kappa(q))} \}_{q\in T} \rangle} \subseteq \mathcal{T}.
    \end{displaymath}
    We prove the reverse inclusion. 
    Let $A\in \mathcal{T}$. 
    By tensoring with $A$, we obtain $A \in \langle \mathbf{R}s_\ast E \otimes^{\mathbf{L}} A \rangle_n$. 
    By the projection formula, $\mathbf{R}s_\ast E \otimes^{\mathbf{L}} A  \cong \mathbf{R}s_\ast (E \otimes^{\mathbf{L}} \mathbf{L} s^\ast A)$. 
    Since $A\in \mathcal{T}$ and $\mathcal{T}$ is $\otimes$-localizing, we know that $\mathbf{R}s_\ast E \otimes^{\mathbf{L}} A \in \mathcal{T}$. 
    Hence, $E \otimes^{\mathbf{L}} \mathbf{L} s^\ast A\in s^{-1} \mathcal{T}$, which implies 
    \begin{displaymath}
        \mathbf{R}s_\ast (E \otimes^{\mathbf{L}} \mathbf{L} s^\ast A) \in \mathbf{R}s_\ast s^{-1} \mathcal{T}.
    \end{displaymath}
    Applying \Cref{lem:inverse_image_localizing},
    \begin{displaymath}
        \mathbf{R}s_\ast s^{-1} \mathcal{T} = \mathbf{R}s_\ast \overline{\langle \{ \mathbf{R} (h_q)_\ast \mathcal{O}_{\operatorname{Spec}(\kappa(q))} \}_{q\in T} \rangle} \subseteq \overline{\langle \{ \mathbf{R} (s\circ h_q)_\ast \mathcal{O}_{\operatorname{Spec}(\kappa(q))} \}_{q\in T} \rangle}.
    \end{displaymath} 
    Thus,
    \begin{displaymath}
        A \in \langle \mathbf{R}s_\ast E \otimes^{\mathbf{L}} A \rangle_n \subseteq \overline{\langle \{ \mathbf{R} (s\circ h_q)_\ast \mathcal{O}_{\operatorname{Spec}(\kappa(q))} \}_{q\in T} \rangle}.
    \end{displaymath}
    By \Cref{lem:independent_of_representative}, the choice of representatives for $s(T)\subseteq |X|$ does not matter.
\end{proof}

\begin{lemma}
    \label{lem:ascent_pt_generated}
    Let $f\colon Y\to X$ be an fppf cover of quasi-compact quasi-separated algebraic spaces. 
    If $X$ is point-generated, then $Y$ is point-generated.
\end{lemma}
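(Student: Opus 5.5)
We plan to transport point-generation along $\mathbf{L}f^\ast$ and then correct the generators on $Y$ using the tensor structure.

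Suppose $D_{\operatorname{qc}}(X) = \overline{\langle \{\mathbf{R}(t_s)_\ast \mathcal{O}_{\operatorname{Spec}(k_s)}\}_{s\in S}\rangle}$. The functor $\mathbf{L}f^\ast$ is exact and preserves small coproducts. By \Cref{lem:inverse_image_localizing},
\begin{displaymath}
    \mathbf{L}f^\ast D_{\operatorname{qc}}(X) \subseteq \overline{\langle \{\mathbf{L}f^\ast \mathbf{R}(t_s)_\ast \mathcal{O}_{\operatorname{Spec}(k_s)}\}\rangle}.
\end{displaymath}
Since $f$ is flat, flat base change (Stacks, Tag 08IR) along the cartesian square with $Y_s := Y\times_X \operatorname{Spec}(k_s)$ and projections $t_s'\colon Y_s\to Y$, $f_s \colon Y_s \to \operatorname{Spec}(k_s)$ gives
\begin{displaymath}
    \mathbf{L}f^\ast \mathbf{R}(t_s)_\ast \mathcal{O} \cong \mathbf{R}(t'_s)_\ast \mathcal{O}_{Y_s}.
\end{displaymath}
Here $Y_s$ is a quasi-compact quasi-separated algebraic space over a field. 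By \cite[Tag 06NH]{StacksProject}-type arguments (or the structure of algebraic spaces over a field being locally quasi-finite after stratification), one would like $\mathcal{O}_{Y_s}$ to lie in the localizing subcategory generated by residue fields of points of $Y_s$. The cleaner route avoids this, as follows.

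Let $\mathcal{L}\subseteq D_{\operatorname{qc}}(Y)$ be the localizing subcategory generated by $\mathbf{R}v_\ast\mathcal{O}_{\operatorname{Spec}(\ell)}$ for all points $v$ of $Y$. By \Cref{lem:independent_of_representative}, $\mathcal{L}$ is $\otimes$-closed. We then show $\mathcal{O}_Y\in\mathcal{L}$; this implies $\mathcal{L} = D_{\operatorname{qc}}(Y)$, since any $E$ equals $E\otimes^{\mathbf{L}}\mathcal{O}_Y$. For this, choose a compact generator $G$ of $D_{\operatorname{qc}}(Y)$. By \Cref{lem:tensor_to_normal} applied on $Y$, it suffices to see that $\overline{\langle \mathcal{L}\rangle}_\otimes$ contains $\mathcal{O}_Y$, i.e.\ that $\mathcal{O}_Y \in \overline{\langle \{\mathbf{R}(t'_s)_\ast\mathcal{O}_{Y_s}\}\rangle}_\otimes$ implies membership in $\mathcal{L}$. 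The first membership holds by the display above, since $\mathcal{O}_Y = \mathbf{L}f^\ast\mathcal{O}_X$. So we reduce to showing $\mathbf{R}(t'_s)_\ast \mathcal{O}_{Y_s}\in\mathcal{L}$ for each $s$; then $\otimes$-closedness of $\mathcal{L}$ finishes.

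For $\mathbf{R}(t'_s)_\ast\mathcal{O}_{Y_s}\in\mathcal{L}$, apply \Cref{lem:inverse_image_localizing} to $\mathbf{R}(t'_s)_\ast$. It then suffices that $Y_s$ is point-generated, since pushforwards of residue-field sheaves of $Y_s$ are of the form $\mathbf{R}v_\ast\mathcal{O}$ for points $v$ of $Y$. This step is the main obstacle: showing that a quasi-compact quasi-separated algebraic space locally of finite presentation over a field is point-generated. We plan to note that $Y_s\to\operatorname{Spec}(k_s)$ is locally of finite presentation, so $Y_s$ is Noetherian. Choosing an étale affine presentation, Neeman's theorem \cite[Theorem 2.8]{Neeman:1992b} gives point-generation of the affine cover. \Cref{lem:localizing_is_generated_by_fields} then yields that $Y_s$ is point-generated. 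The points to check carefully are that $Y_s$ is quasi-compact (since $f$ is quasi-compact) and quasi-separated, and that this circular-looking use of the Noetherian case relies only on \Cref{lem:localizing_is_generated_by_fields}, not on the proposition being proved.
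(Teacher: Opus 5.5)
Your argument is correct, but it is organized differently from the paper's.

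The paper first chooses an affine \'{e}tale presentation $s\colon U\to Y$ and shows that $U$ is point-generated. The fibers $U\times_X\operatorname{Spec}(k)$ of $f\circ s$ over the representatives $t_s$ are affine (as $t_s$ is affine) and of finite type over $k$, so Neeman's theorem applies to them directly. \Cref{lem:independent_of_representative} then upgrades $\mathcal{O}_U\in\overline{\langle\cdots\rangle}$ to all of $D_{\operatorname{qc}}(U)$. Finally the paper invokes \Cref{lem:localizing_is_generated_by_fields} once, for $U\to Y$.

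You instead pull back along $f$ itself, so your fibers $Y_s$ are Noetherian algebraic spaces that need not be schemes. You must therefore first make each $Y_s$ point-generated: apply Neeman to an affine \'{e}tale presentation $W_s\to Y_s$, then apply \Cref{lem:localizing_is_generated_by_fields} to $W_s\to Y_s$, once for every $s\in S$. In effect you prove the paper's corollary on Noetherian algebraic spaces inside the argument. This is legitimate, because \Cref{lem:localizing_is_generated_by_fields} rests only on \Cref{prop:fppf_descent}, not on the statement at hand.

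Your route has the small advantage of using $\mathcal{O}_Y\cong\mathbf{L}f^\ast\mathcal{O}_X$ directly. The paper's appeal to descendability to place $\mathcal{O}_U$ in $\langle \mathbf{L}(f\circ s)^\ast D_{\operatorname{qc}}(X)\rangle_n$ is likewise avoidable, since $\mathcal{O}_U\cong\mathbf{L}(f\circ s)^\ast\mathcal{O}_X$. The paper's route has the advantage that only affine Noetherian fibers occur and the descent lemma is applied a single time.

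Two cleanups:
\begin{enumerate}
\item The detour through a compact generator and \Cref{lem:tensor_to_normal} can be deleted. As phrased it is tautological, since $\mathcal{L}$ is already $\otimes$-closed and you already have $\mathcal{O}_Y$ in the plain localizing subcategory generated by the $\mathbf{R}(t'_s)_\ast\mathcal{O}_{Y_s}$.
\item You should record that $t_s$ is affine (Tag 09TF), hence so is its base change $t'_s$. This is what supplies flat base change, the quasi-compactness and quasi-separatedness of $Y_s$, and the coproduct preservation of $\mathbf{R}(t'_s)_\ast$ needed to apply \Cref{lem:inverse_image_localizing}.
\end{enumerate}
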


\begin{proof}
    Choose an \'{e}tale presentation $s\colon U \to Y$ with $U$ affine. 
    By \Cref{lem:localizing_is_generated_by_fields}, it suffices to show that $U$ is point-generated. 
    By \Cref{prop:fppf_descent} and \Cref{lem:descent}, $\mathcal{O}_X \in \langle \mathbf{R}(f\circ s)_\ast D_{\operatorname{qc}}(U) \rangle$. 
    In fact, there exists $n\geq 0$ such that $\mathcal{O}_X \in \langle \mathbf{R}(f\circ s)_\ast D_{\operatorname{qc}}(U) \rangle_n$.
    Applying $\mathbf{L}(f\circ s)^\ast$, it follows that 
    \begin{displaymath}
        \mathcal{O}_U \in \langle \mathbf{L}(f\circ s)^\ast \mathbf{R}(f\circ s)_\ast D_{\operatorname{qc}}(U) \rangle_n \subseteq \langle \mathbf{L}(f\circ s)^\ast D_{\operatorname{qc}}(X) \rangle_n.
    \end{displaymath}
    Since $X$ is point-generated, there exists $S\subseteq |X|$ such that $D_{\operatorname{qc}}(X) =\overline{\langle \big\{ \mathbf{R}(t_s)_\ast \mathcal{O}_{\operatorname{Spec}(k)} \big\}_{s\in S} \rangle}$ where $t_s$ is some representative of $s\in S$. 
    As $\mathbf{L}(f\circ s)^\ast$ preserves small coproducts, \Cref{lem:inverse_image_localizing} implies 
    \begin{displaymath}
        \mathbf{L}(f\circ s)^\ast \overline{\langle \big\{ \mathbf{R}(t_s)_\ast \mathcal{O}_{\operatorname{Spec}(k)} \big\}_{s\in S} \rangle} 
        \subseteq \overline{\langle \big\{ \mathbf{L}(f\circ s)^\ast  \mathbf{R}(t_s)_\ast \mathcal{O}_{\operatorname{Spec}(k)} \big\}_{s\in S} \rangle}
    \end{displaymath}
    Now, a localizing subcategory is triangulated, so it is closed under extensions and shifts, which implies 
    \begin{displaymath}
        \mathcal{O}_U 
        \in \langle \mathbf{L}(f\circ s)^\ast D_{\operatorname{qc}}(X) \rangle_n
        \subseteq \overline{\langle \big\{ \mathbf{L}(f\circ s)^\ast  \mathbf{R}(t_s)_\ast \mathcal{O}_{\operatorname{Spec}(k)} \big\}_{s\in S} \rangle}.
    \end{displaymath}
    Fix $t_s$. 
    Consider the fibered square 
    \begin{displaymath}
        \begin{tikzcd}
            {U\times_X \operatorname{Spec}(k)} & {\operatorname{Spec}(k)} \\
            U & {X.}
            \arrow["{p_2^s}", from=1-1, to=1-2]
            \arrow["{p_1^s}"', from=1-1, to=2-1]
            \arrow["{t_s}", from=1-2, to=2-2]
            \arrow["{f\circ s}"', from=2-1, to=2-2]
        \end{tikzcd}
    \end{displaymath}
    Base change says $p_2^s$ is an fppf cover. 
    By \cite[\href{https://stacks.math.columbia.edu/tag/09TF}{Tag 09TF}]{StacksProject}, $t_s$ is affine.
    Again, by base change, $p_1^s$ is affine.
    Hence, $U\times_X \operatorname{Spec}(k)$ is affine, and so $U\times_X \operatorname{Spec}(k)$ is finitely presented over $k$.  
    This ensures that $U\times_X \operatorname{Spec}(k)$ is Noetherian. 
    By \cite[Theorem 2.8]{Neeman:1992b} and \cite[\href{https://stacks.math.columbia.edu/tag/071Q}{Tag 071Q}]{StacksProject}, $U\times_{X} \operatorname{Spec}(k)$ is point-generated, and so,
    \begin{displaymath}
        \overline{\langle \mathcal{O}_{U\times_{X} \operatorname{Spec}(k)} \rangle} 
        = \overline{\langle \big\{ \mathbf{R}(v_l)_\ast \mathcal{O}_{\operatorname{Spec}(\kappa(l))} \big\}_{l\in |{U\times_{X} \operatorname{Spec}(k)|}} \rangle}.
    \end{displaymath}
    Flat base change implies 
    \begin{displaymath}
        \mathbf{R}(p_1^s)_\ast \mathcal{O}_{U\times_{X} \operatorname{Spec}(k)} \cong \mathbf{L}(f\circ s)^\ast  \mathbf{R}(t_s)_\ast \mathcal{O}_{\operatorname{Spec}(k)}.
    \end{displaymath}
    Since $\mathbf{R}(p^s_1)_\ast$ admits a right adjoint \cite[\href{https://stacks.math.columbia.edu/tag/0E55}{Tag 0E55}]{StacksProject} it preserves small coproducts. 
    Thus, \Cref{lem:inverse_image_localizing} implies 
    \begin{displaymath}
        \mathbf{R}(p_1^s)_\ast \overline{\langle \mathcal{O}_{U\times_{X} \operatorname{Spec}(k)} \rangle} 
        \subseteq \overline{\langle \big\{ \mathbf{R}(p_1^s\circ v_l)_\ast \mathcal{O}_{\operatorname{Spec}(\kappa(l))} \big\}_{l\in |{U\times_{X} \operatorname{Spec}(k)|}} \rangle}.
    \end{displaymath}
    Consequently, we obtain that 
    \begin{displaymath}
        \begin{aligned}
            \mathcal{O}_U 
            &\in \langle \mathbf{L}(f\circ s)^\ast D_{\operatorname{qc}}(X) \rangle_n
            \\&\subseteq \overline{\langle \big\{ \mathbf{L}(f\circ s)^\ast  \mathbf{R}(t_s)_\ast \mathcal{O}_{\operatorname{Spec}(k)} \big\}_{s\in S} \rangle}
            \\&\subseteq \overline{\langle \bigg\{ \big\{ \mathbf{R}(p_1^s\circ v_l)_\ast \mathcal{O}_{\operatorname{Spec}(\kappa(l))} \big\}_{l\in |{U\times_{X} \operatorname{Spec}(k)|}} \bigg\}_{s\in S} \rangle}.
        \end{aligned}
    \end{displaymath}
    However, \Cref{lem:independent_of_representative} says the latter category above is closed under tensoring by objects from $D_{\operatorname{qc}}(U)$.
    Therefore, since this latter category contains the tensor unit $\mathcal{O}_U$, it follows that $U$ is point-generated.
\end{proof}

\begin{proof}
    [Proof of \Cref{prop:ascent_descent_pt_generated}]
    This follows from \Cref{lem:localizing_is_generated_by_fields,lem:ascent_pt_generated}.
\end{proof}

\begin{proof}
    [Proof of \Cref{thm:point_generated_classification}]
    Choose an \'{e}tale presentation $s\colon U\to X$ with $U$ affine. 
    In particular, $s$ is an fppf cover.
    Since $X$ is point-generated, \Cref{prop:ascent_descent_pt_generated} implies that $U$ is point-generated.
    If $u\in |U|$, let $h_u\colon \operatorname{Spec} (\kappa(u))\to U$ be the natural morphism.

    \Cref{lem:independent_of_representative} implies that whether $\mathbf{R}t_\ast \mathcal{O}_{\operatorname{Spec}(k)}$ belongs to a
    $\otimes$-localizing subcategory depends only on the point represented
    by $t\colon \operatorname{Spec}(k) \to X$. 
    The same result shows that $\theta(S)$ is independent of the
    chosen representatives and is $\otimes$-localizing. 
    Thus, both
    $\phi$ and $\theta$ are well-defined.

    We first prove that $\theta\circ\phi$ is the identity. 
    Let
    $\mathcal{T}\subseteq D_{\operatorname{qc}}(X)$ be a $\otimes$-localizing
    subcategory. 
    By \Cref{lem:localizing_is_generated_by_fields}, there exists a subset $S\subseteq |X|$ such that $\mathcal{T}=\theta(S)$.
    Since every point of $S$ is represented by $t\colon \operatorname{Spec}(k)\to X$ for which $\mathbf{R}t_\ast \mathcal{O}_{\operatorname{Spec}(k)}$ belonging to
    $\mathcal{T}$, we have $S\subseteq\phi(\mathcal{T})$. 
    Consequently, $\mathcal{T}=\theta(S)\subseteq\theta(\phi(\mathcal{T}))$.

    Conversely, let $p\in\phi(\mathcal{T})$. 
    By the definition of $\phi$,
    there is a representative $t\colon\operatorname{Spec} (k)\to X$ of $p$ such that $\mathbf{R}t_\ast \mathcal{O}_{\operatorname{Spec}(k)}\in\mathcal{T}$. 
    By \Cref{lem:independent_of_representative}, derived pushforward of a structure sheaf associated to any other representative of $p$ also
    belongs to $\mathcal{T}$. 
    It follows that every generator of
    $\theta(\phi(\mathcal{T}))$ belongs to $\mathcal{T}$, and hence $\theta(\phi(\mathcal{T}))\subseteq\mathcal{T}$. 
    Therefore, $\theta\circ\phi=1$. 

    We next prove that $\phi\circ\theta$ is the identity. Fix a subset
    $S\subseteq |X|$. For every $p\in S$, choose a point $r_p\in |U|$ such that $|s|(r_p)=p$. 
    Set $t_p:=s\circ h_{r_p}\colon \operatorname{Spec} (\kappa(r_p))\to X$. 
    The morphism $t_p$ represents $p$ \cite[Lemma 2.4]{GuisadoVillaalgordo/Lank:2026}, so \Cref{lem:independent_of_representative} allows us to compute $\theta(S)$ using these representatives:
    \begin{displaymath}
        \theta(S) = \overline{\langle \{ \mathbf{R}(t_p)_\ast \mathcal{O}_{\operatorname{Spec} (\kappa(r_p))} \}_{p\in S} \rangle}.
    \end{displaymath}
    It is immediate that $S\subseteq\phi(\theta(S))$. 
    We are left to prove the reverse inclusion.

    Let $q\in\phi(\theta(S))$.
    Since $s$ is surjective, choose $r\in |U|$ such that $|s|(r)=q$.
    Put $t_r:=s\circ h_r\colon \operatorname{Spec} (\kappa(r))\to X$. 
    Since $t_r$ represents $q$, the well-definedness of $\phi$ gives
    $\mathbf{R}(t_r)_\ast \mathcal{O}_{\operatorname{Spec} (\kappa(r))}\in\theta(S)$.
    Applying $\mathbf{L}s^\ast $ and \Cref{lem:inverse_image_localizing}, we obtain
    \begin{equation}
        \label{eq:pullback-point-r}
        \mathbf{L}s^\ast \mathcal \mathbf{R}(t_r)_\ast \mathcal{O}_{\operatorname{Spec} (\kappa(r))}
        \in
        \overline{\langle \{ \mathbf{L}s^\ast \mathcal \mathbf{R}(t_p)_\ast \mathcal{O}_{\operatorname{Spec} (\kappa(r_p))}\}_{p\in S}
        \rangle}.
    \end{equation}

    For any $u\in |U|$, form the fibered square
    \begin{displaymath}
        \begin{tikzcd}
            W_u:=U\times_X\operatorname{Spec} (\kappa(u))
                \arrow[r,"b_u"]
                \arrow[d,"a_u"']
            &
            \operatorname{Spec} (\kappa(u))
                \arrow[d,"t_u:=s\circ h_u"]
            \\
            U
                \arrow[r,"s"']
            &
            X.
        \end{tikzcd}
    \end{displaymath}
    There is a canonical morphism $e_u\colon\operatorname{Spec} (\kappa(u))\to W_u$ induced by the pair $(h_u,1)$, and it satisfies $a_u\circ e_u=h_u$ and $b_u\circ e_u=1_{\operatorname{Spec} (\kappa(u))}$.

    By \cite[\href{https://stacks.math.columbia.edu/tag/09TF}{Tag 09TF}]{StacksProject}, the morphism $t_u$ is affine.
    Therefore, its base change $a_u$ is affine. Since $U$ is affine, it
    follows that $W_u$ is affine. 
    Moreover, $b_u$ is the base change of the
    quasi-compact \'{e}tale morphism $s$, so $b_u$ is finitely presented. 
    Thus,
    $W_u$ is an affine Noetherian scheme.
    Flat base change gives
    \begin{displaymath}
        \mathbf{L}s^\ast \mathcal \mathbf{R}(t_u)_\ast \mathcal{O}_{\operatorname{Spec} (\kappa(u))} \cong \mathbf{R}(a_u)_\ast \mathcal{O}_{W_u}.
    \end{displaymath}
    In particular, \eqref{eq:pullback-point-r} becomes
    \begin{equation}
        \label{eq:fiber-r-contained}
        \mathbf{R}(a_r)_\ast \mathcal{O}_{W_r}
        \in
        \overline{\langle \{
        \mathbf{L}s^\ast \mathcal \mathbf{R}(t_p)_\ast \mathcal{O}_{\operatorname{Spec} (\kappa(r_p))} \}_{p\in S} \rangle}.
    \end{equation}

    Since $W_r$ is affine, $D_{\operatorname{qc}}(W_r) = \overline{\langle \mathcal{O}_{W_r} \rangle}$, and so $\mathbf{R}(e_r)_\ast \mathcal{O}_{\operatorname{Spec} (\kappa(r))}\in \overline{\langle \mathcal{O}_{W_r} \rangle}$. 
    Applying $\mathbf{R}(a_r)_\ast $ and \Cref{lem:inverse_image_localizing} gives
    \begin{displaymath}
        \begin{aligned}
            \mathbf{R}(h_r)_\ast \mathcal{O}_{\operatorname{Spec} (\kappa(r))}
            &\cong
            \mathbf{R}(a_r)_\ast \mathbf{R}(e_r)_\ast \mathcal{O}_{\operatorname{Spec} (\kappa(r))}
            \\
            &\in
            \overline{\langle
            \mathbf{R}(a_r)_\ast \mathcal{O}_{W_r}
            \rangle}.
        \end{aligned}
    \end{displaymath}
    Combining this with \eqref{eq:fiber-r-contained}, we obtain
    \begin{equation}
        \label{eq:point-r-in-pullbacks}
            \mathbf{R}(h_r)_\ast \mathcal{O}_{\operatorname{Spec} (\kappa(r))}
            \in
            \overline{\langle
            \{ \mathbf{L}s^\ast \mathbf{R}(t_p)_\ast \mathcal{O}_{\operatorname{Spec} (\kappa(r_p))}\}_{p\in S}
            \rangle}.
    \end{equation}

    We now describe the localizing subcategory on the right. For each
    $p\in S$, abbreviate $W_p:=W_{r_p}$, $a_p:=a_{r_p}$, and $b_p:=b_{r_p}$.
    Flat base change gives
    \begin{equation}
        \label{eq:pullback-point-p}
        \mathbf{L}s^\ast \mathbf{R}(t_p)_\ast \mathcal{O}_{\operatorname{Spec} (\kappa(r_p))}
        \cong
        \mathbf{R}(a_p)_\ast \mathcal{O}_{W_p}.
    \end{equation}

    For every $l\in |W_p|$, let $v_l\colon\operatorname{Spec} (\kappa(l))\to W_p$ denote the natural morphism. 
    Since $W_p$ is an affine Noetherian
    scheme, \cite[Theorem 2.8]{Neeman:1992b} and \cite[\href{https://stacks.math.columbia.edu/tag/071Q}{Tag 071Q}]{StacksProject} give
    \begin{displaymath}
        D_{\operatorname{qc}}(W_p) =
        \overline{\langle \{ \mathbf{R}(v_l)_\ast \mathcal{O}_{\operatorname{Spec} (\kappa(l))}\}_{l\in |W_p|} \rangle}.
    \end{displaymath}
    On the other hand, because $W_p$ is affine, $D_{\operatorname{qc}}(W_p) =
    \overline{\langle \mathcal{O}_{W_p} \rangle}$. 
    Applying $\mathbf{R}(a_p)_\ast $ and \Cref{lem:inverse_image_localizing} in both directions therefore yields
    \begin{equation}
        \label{eq:fiber-point-generators}
        \overline{\langle \mathbf{R}(a_p)_\ast \mathcal{O}_{W_p} \rangle}
        = \overline{\langle \{ \mathbf{R}(a_p\circ v_l)_\ast 
        \mathcal{O}_{\operatorname{Spec} (\kappa(l))} \}_{\in |W_p|} \rangle }.
    \end{equation}

    Define $\Sigma_p := |a_p(|W_p|) \subseteq |U|$ and $\Sigma := \cup_{p\in S}\Sigma_p \subseteq |U|$.
    For every $l\in |W_p|$, the morphism $a_p\circ v_l\colon
    \operatorname{Spec} (\kappa(l))\to U$ represents the point $|a_p|(l)\in\Sigma_p$. 
    Hence, applying \Cref{lem:independent_of_representative}, 
    \eqref{eq:pullback-point-p} and
    \eqref{eq:fiber-point-generators} imply
    \begin{equation}
        \label{eq:pullbacks-correspond-to-sigma}
        \overline{\langle
        \{ \mathbf{L}s^\ast \mathcal \mathbf{R}(t_p)_\ast \mathcal{O}_{\operatorname{Spec} (\kappa(r_p))} \}_{p\in S}
        \rangle}
        =
        \overline{\langle
        \{ \mathbf{R}(h_u)_\ast \mathcal{O}_{\operatorname{Spec}(\kappa(u))}
        \}_{u\in\Sigma}
        \rangle}.
    \end{equation}

    Notice also that $s\circ a_p=t_p\circ b_p$. 
    Since $t_p$ represents $p$, it follows that every point in
    $\Sigma_p$ maps to $p$. 
    Thus,
    \begin{equation}
        \label{eq:sigma-contained}
        \Sigma_p\subseteq |s|^{-1}(\{p\})
        \qquad\text{and hence}\qquad
        \Sigma\subseteq |s|^{-1}(S).
    \end{equation}
    Combining \eqref{eq:point-r-in-pullbacks} and
    \eqref{eq:pullbacks-correspond-to-sigma}, we obtain
    \begin{displaymath}
        \mathbf{R}(h_r)_\ast \mathcal{O}_{\operatorname{Spec} (\kappa(r))}
        \in
        \overline{\langle
        \{ \mathbf{R}(h_u)_\ast \mathcal{O}_{\operatorname{Spec} (\kappa(u))}
        \}_{u\in\Sigma}
    \rangle}.
    \end{displaymath}
    The scheme $U$ is point-generated. 
    Therefore,
    \cite[Theorem 0.1]{Alonso/Jeremias/Loureiro:2024} identifies its $\otimes$-localizing
    subcategories with subsets of $|U|$. 
    The preceding membership implies
    that $r\in\Sigma$. 
    By \eqref{eq:sigma-contained}, $r\in |s|^{-1}(S)$.
    Since $|s|(r)=q$, it follows that $q\in S$.
    Therefore, $\phi(\theta(S))\subseteq S$. 
    Together with the opposite inclusion proved above, this gives
    $\phi\circ\theta=1$. 
    We have now shown that $\phi$ and $\theta$ are mutually inverse.
\end{proof}

\section{Bousfield localization}
\label{sec:Bousfield}

We record several consequences of the classification that may be useful in future work. 
While not necessary for our main results, we use constructions involving $t$-structures. 
The reader is referred to \cite{Beilinson/Berstein/Deligne/Gabber:2018} for background on $t$-structures, and \cite{Hrbek/Lank/Pizzirani:2025} for details on a classification of suitable $t$-structures on algebraic spaces.
Throughout this section, let $X$ be a point-generated algebraic space. 
We recall some notions.
For $E\in D_{\operatorname{qc}}(X)$, set 
\begin{displaymath}
    \operatorname{Supp}(E) := \bigcup_{i\in \mathbb{Z}} \operatorname{Supp}(\mathcal{H}^i (E)).
\end{displaymath}
Given $S\subseteq |X|$, let 
\begin{displaymath}
    D_{\operatorname{qc},S}(X):= \{ E  \in D_{\operatorname{qc}}(X) :  \operatorname{Supp}(E)\subseteq S \}.
\end{displaymath}

\begin{lemma}
    \label{lem:supported_Bousfield}
    For any subset $S\subseteq |X|$, there exists a subset $S^\prime\subseteq S$ such that 
    \begin{displaymath}
        D_{\operatorname{qc},S}(X) = D_{\operatorname{qc},S}(X) = \overline{\langle \{ \mathbf{R}(t_s)_\ast \mathcal{O}_{\operatorname{Spec}(k_s)} \}_{s\in S^\prime} \rangle}
    \end{displaymath}
    where $t_s$ is a representative of $s\in S^\prime$. 
    If $S$ is specialization closed, then we can take $S^\prime = S$.
\end{lemma}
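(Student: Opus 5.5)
Because $D_{\operatorname{qc},S}(X)$ is $\otimes$-localizing, \Cref{thm:point_generated_classification} gives $D_{\operatorname{qc},S}(X)=\theta(S')$ with $S'=\phi(D_{\operatorname{qc},S}(X))$; the remaining work is to show $S'\subseteq S$, and $S'=S$ when $S$ is specialization closed.

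First, $D_{\operatorname{qc},S}(X)$ is $\otimes$-localizing. It is closed under shifts. It is closed under small coproducts because cohomology sheaves commute with coproducts and the support of a direct sum of quasi-coherent sheaves is the union of the supports. For a distinguished triangle $A\to E\to B\to A[1]$, the long exact cohomology sequence gives $\operatorname{Supp}\mathcal{H}^i(E)\subseteq \operatorname{Supp}\mathcal{H}^i(A)\cup\operatorname{Supp}\mathcal{H}^i(B)$. For the tensor action, support is computed étale locally, so I pass to an affine étale presentation $s\colon U\to X$. Since $s$ is flat, $\mathbf{L}s^\ast$ commutes with cohomology and $\operatorname{Supp}(\mathbf{L}s^\ast E)=|s|^{-1}\operatorname{Supp}(E)$. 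On the affine $U$, the standard fact $\operatorname{Supp}(F\otimes^{\mathbf{L}}E)\subseteq\operatorname{Supp}(E)$ (checked stalkwise via Tor over local rings) then gives closure under $\otimes$. With this, \Cref{thm:point_generated_classification} yields $D_{\operatorname{qc},S}(X)=\theta(S')$ for $S'=\phi(D_{\operatorname{qc},S}(X))$.

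Next, $S'\subseteq S$. If $p\in S'$ with representative $t\colon\operatorname{Spec}(k)\to X$, then $\mathbf{R}t_\ast\mathcal{O}_{\operatorname{Spec}(k)}$ lies in $D_{\operatorname{qc},S}(X)$. Since $t$ is affine (\cite[\href{https://stacks.math.columbia.edu/tag/09TF}{Tag 09TF}]{StacksProject}), this object is a nonzero quasi-coherent sheaf $t_\ast\mathcal{O}$. Computing on $U$ via flat base change, as in the proof of \Cref{thm:point_generated_classification}, its support is the image of $|t|$, which is exactly $\{p\}$. Hence $p\in S$.

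For specialization closed $S$, I show $S\subseteq S'$, that is, $\mathbf{R}t_\ast\mathcal{O}_{\operatorname{Spec}(k)}\in D_{\operatorname{qc},S}(X)$ for every $p\in S$. The support of this object is $\{p\}$, not its closure: the residue field pushforward is a skyscraper-type module at $p$. So it lies in $D_{\operatorname{qc},S}(X)$ for any $S$ containing $p$, and specialization closedness is not even needed here. The hypothesis matters only if support is instead defined as the closure of the nonvanishing locus, in which case one uses $\overline{\{p\}}\subseteq S$.

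The main obstacle is the support computation on an algebraic space: showing $\operatorname{Supp}(t_\ast\mathcal{O}_{\operatorname{Spec}(k)})=\{p\}$ by descending along $s$. Concretely, I use that $W=U\times_X\operatorname{Spec}(k)$ maps onto $|s|^{-1}(p)$ and that support commutes with flat pullback.
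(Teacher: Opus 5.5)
Your argument breaks in the specialization-closed part, because the central support computation is wrong. You claim $\operatorname{Supp}(\mathbf{R}t_\ast\mathcal{O}_{\operatorname{Spec}(k)})=\{p\}$. With the paper's definition (the union of the supports of the cohomology sheaves, i.e.\ the locus where stalks are nonzero), the support of a residue-field pushforward is the set of \emph{all specializations} of $p$. Already on $\operatorname{Spec}(R)$ one has $\kappa(p)\otimes_R R_q=\kappa(p)$ if $p\subseteq q$, since elements of $R\setminus q$ already act invertibly, and $\kappa(p)\otimes_R R_q=0$ otherwise. Hence $\operatorname{Supp}\kappa(p)=V(p)$. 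In your flat base change step, the mistake is identifying the support of $a_\ast\mathcal{O}_W$ on $U$ with the image of $a\colon W\to U$. For affine $a$, the stalk at $u$ is nonzero iff $W\times_U\operatorname{Spec}(\mathcal{O}_{U,u})\neq\emptyset$, i.e.\ iff some generization of $u$ lies in the image.

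Consequently your claim that specialization closedness is not needed is false. Take $X=\operatorname{Spec}(\mathbb{Z})$ and $S=\{(0)\}$. Then $\mathbb{Q}_{(\ell)}=\mathbb{Q}\neq 0$ for every prime $\ell$, so $\mathbb{Q}\notin D_{\operatorname{qc},S}(X)$. In fact $D_{\operatorname{qc},S}(X)=0$, which forces $S^\prime=\emptyset\neq S$. Your remark about ``closure of the nonvanishing locus'' misplaces where the closure comes from: the nonvanishing locus is already specialization closed.

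The first half survives. For $S^\prime\subseteq S$ you only need $p\in\operatorname{Supp}(t_\ast\mathcal{O}_{\operatorname{Spec}(k)})$, which is true. This is exactly what the paper proves: localize at a point $u$ in the image of $U\times_X\operatorname{Spec}(k)\to U$, then use flat base change and affineness of $t$ to get nonvanishing. Your stalkwise check that $D_{\operatorname{qc},S}(X)$ is $\otimes$-localizing on an affine \'etale presentation is fine and matches the paper.

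What is missing for the specialization-closed case is the containment $\operatorname{Supp}(t_\ast\mathcal{O}_{\operatorname{Spec}(k)})\subseteq\{y^\prime : p\rightsquigarrow y^\prime\}$. The paper obtains it as follows. Take $u\in\operatorname{Supp}(\mathbf{L}s^\ast t_\ast\mathcal{O}_{\operatorname{Spec}(k)})$. Flat base change along $\operatorname{Spec}(\mathcal{O}_{U,u})\to X$ shows $\operatorname{Spec}(\mathcal{O}_{U,u})\times_X\operatorname{Spec}(k)\neq\emptyset$. So some generization of $u$ maps to $p$, whence $p\rightsquigarrow|s|(u)$. Specialization closedness of $S$ then puts $|s|(u)$ in $S$. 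This is precisely where the hypothesis is used, and your proposal needs this step in place of the false claim.
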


\begin{proof}
    Choose an \'{e}tale presentation $s\colon U \to X$ by an affine scheme. 
    First we check that $D_{\operatorname{qc},S}(X)$ is $\otimes$-localizing. 
    Let $E\in D_{\operatorname{qc},S}(X)$. 
    By \cite[\href{https://stacks.math.columbia.edu/tag/07TY}{Tag 07TY}]{StacksProject} and flatness of $s$, 
    \begin{displaymath}
        \operatorname{Supp}(\mathcal{H}^i (E)) 
        = |s|(\operatorname{Supp}(\mathcal{H}^i (\mathbf{L} s^\ast E))) 
        = |s|(\operatorname{Supp}(s^\ast \mathcal{H}^i (E))).
    \end{displaymath}
    Fix $p\not\in S$. 
    There exists $u \in |U| \setminus |s|^{-1}(S)$ such that $|s|(u)=p$.
    Note that 
    \begin{displaymath}
        u \in |U| \setminus \cup_{i \in \mathbb{Z}} s(\operatorname{Supp}(s^\ast \mathcal{H}^i (E))). 
    \end{displaymath}

    For any $A\in D_{\operatorname{qc},S}(X)$, the derived pullback of $\mathbf{L}s^\ast A$ along $\operatorname{Spec}(\mathcal{O}_{U,u})$ vanishes. 
    Hence, the derived pullback of any shift of $\mathbf{L}s^\ast A$ along $\operatorname{Spec}(\mathcal{O}_{U,u}) \to U$ vanishes. 
    Thus, $D_{\operatorname{qc},S}(X)$ is closed under shifts. 

    Next let $f\colon A \to B$ be a morphism with $A,B\in D_{\operatorname{qc},S}(X)$. 
    Then the derived pullback of $\mathbf{L}s^\ast A$ and $\mathbf{L}s^\ast B$ along $\operatorname{Spec}(\mathcal{O}_{U,u}) \to U$ vanishes. 
    Hence, the derived pullback of $\operatorname{cone}(f)$ along $\operatorname{Spec}(\mathcal{O}_{U,u}) \to U$ vanishes.
    This shows that $D_{\operatorname{qc},S}(X)$ is a triangulated subcategory of $D_{\operatorname{qc}}(X)$. 

    Let $I$ be a set. 
    Choose $E_i \in D_{\operatorname{qc},S}(X)$. 
    Consider the small coproduct $\oplus_{i\in I} E_i$ in $D_{\operatorname{qc}}(X)$. 
    Recall that derived pullbacks preserve small coproducts. 
    Hence, $\mathbf{L}s^\ast (\oplus_{i\in I} E_i) \cong \oplus_{i\in I} \mathbf{L}s^\ast E_i$.
    Moreover, by this fact, the derived pullback of $\oplus_{i\in I} \mathbf{L}s^\ast E_i$ along $\operatorname{Spec}(\mathcal{O}_{U,u}) \to U$ vanishes.
    Hence, $D_{\operatorname{qc},S}(X)$ is localizing. 

    Lastly we check that $D_{\operatorname{qc},S}(X)$ is closed under tensor by objects from $D_{\operatorname{qc}}(X)$. 
    Choose $E\in D_{\operatorname{qc}}(X)$ and $A \in D_{\operatorname{qc},S}(X)$. 
    Recall that derived pullbacks are monoidal with respect to the tensor product. 
    Hence, $\mathbf{L}s^\ast (E\otimes^{\mathbf{L}} A)\cong \mathbf{L}s^\ast  E\otimes^{\mathbf{L}}  \mathbf{L}s^\ast A$. 
    Again, by this fact, the derived pullback of $\mathbf{L}s^\ast  E\otimes^{\mathbf{L}}  \mathbf{L}s^\ast A$ along $\operatorname{Spec}(\mathcal{O}_{U,u}) \to U$ vanishes.
    This shows $D_{\operatorname{qc},S}(X)$ is an $\otimes$-localizing subcategory. 

    By \Cref{thm:point_generated_classification}, there exists a subset $S^\prime \subseteq |X|$ such that 
    \begin{displaymath}
        D_{\operatorname{qc},S}(X) = \overline{\langle \{ \mathbf{R}(t_x)_\ast \mathcal{O}_{\operatorname{Spec}(k_x)} \}_{x\in S^\prime} \rangle}
    \end{displaymath}
    where $t_x$ is a representative of $x\in S^\prime$. 
    By definition, this means $\operatorname{Supp}(\mathbf{R}(t_x)_\ast \mathcal{O}_{\operatorname{Spec}(k_x)})\subseteq S$. 
    Hence, the image of $t_x$ on the underlying topological spaces is contained in $S$, and so $S^\prime \subseteq S$. 
    Indeed, consider the fibered square
    \begin{displaymath}
        \begin{tikzcd}
            {U \times_X \operatorname{Spec}(k_x)} & {\operatorname{Spec}(k_x)} \\
            U & {X.}
            \arrow["{s_x}", from=1-1, to=1-2]
            \arrow["{t^\prime_x}"', from=1-1, to=2-1]
            \arrow["{t_x}", from=1-2, to=2-2]
            \arrow["s"', from=2-1, to=2-2]
        \end{tikzcd}
    \end{displaymath}
    As $s$ is an \'{e}tale presentation, $s_x$ is surjective and hence $U \times_X \operatorname{Spec}(k_x)$ is nonempty. 
    Choose any $u\in |t^\prime_x| (|U \times_X \operatorname{Spec}(k_x)|)$. 
    Denote $h\colon \operatorname{Spec}(\mathcal{O}_{U,u})\to U$ the natural morphism. 
    Consider the fibered square
    \begin{displaymath}
        \begin{tikzcd}
            {\operatorname{Spec}(\mathcal{O}_{U,u}) \times_X \operatorname{Spec}(k_x)} & {\operatorname{Spec}(k_x)} \\
            {\operatorname{Spec}(\mathcal{O}_{U,u})} & {X.}
            \arrow["{g_x}", from=1-1, to=1-2]
            \arrow["{t^{\prime \prime}_x}"', from=1-1, to=2-1]
            \arrow["{t_x}", from=1-2, to=2-2]
            \arrow["{s\circ h}"', from=2-1, to=2-2]
        \end{tikzcd}
    \end{displaymath}
    By \cite[\href{https://stacks.math.columbia.edu/tag/03H4}{Tag 03H4}]{StacksProject}, the choice of $u$ implies $\operatorname{Spec}(\mathcal{O}_{U,u}) \times_X \operatorname{Spec}(k_x)$ is nonempty.
    Since $t_x$ is affine \cite[\href{https://stacks.math.columbia.edu/tag/09TF}{Tag 09TF}]{StacksProject}, base change says $t^{\prime \prime}_x$ is affine.
    If $\mathbf{L}(s\circ h)^\ast \mathbf{R}(t_x)_\ast \mathcal{O}_{\operatorname{Spec}(k_x)} \cong 0$, then flat base change implies $\mathbf{R}t^{\prime \prime}_x \mathcal{O}_{\operatorname{Spec}(\mathcal{O}_{U,u}) \times_X \operatorname{Spec}(k_x)} \cong 0$, and so \cite[Lemma 3.2]{GuisadoVillaalgordo/Lank:2026} implies $\mathcal{O}_{\operatorname{Spec}(\mathcal{O}_{U,u}) \times_X \operatorname{Spec}(k_x)} \cong 0$. 
    However, this would be absurd because $\operatorname{Spec}(\mathcal{O}_{U,u}) \times_X \operatorname{Spec}(k_x)$ is nonempty.
    Hence, $u\in \operatorname{Supp}(\mathbf{L}s^\ast \mathbf{R}(t_x)_\ast \mathcal{O}_{\operatorname{Spec}(k_x)})$.
    Choose $w\in |U \times_X \operatorname{Spec}(k_x)|$ such that $|t^\prime_x|(w)=u$. 
    Let $\eta$ be the unique point of $\operatorname{Spec}(k_x)$. 
    Thus
    \begin{displaymath}
        (|t_x\circ s_x|)(w) = |t_x|(\eta) = (|s \circ t^\prime_x|)(w)=s(u) \in \operatorname{Supp}(\mathbf{R}(t_x)_\ast \mathcal{O}_{\operatorname{Spec}(k_x)}),
    \end{displaymath}
    showing us what we want.

    Lastly, assume $S$ is specialization closed. 
    Choose any $y\in S$.
    Let $t\colon \operatorname{Spec}(k)\to X$ be a representative of $y$. 
    We want to show that $\mathbf{R}t_\ast \mathcal{O}_{\operatorname{Spec}(k)}\in D_{\operatorname{qc},S}(X)$. 
    As $X$ is quasi-separated, $t$ is affine, and so the natural morphism $t_\ast \mathcal{O}_{\operatorname{Spec}(k)} \to \mathbf{R}t_\ast \mathcal{O}_{\operatorname{Spec}(k)}$ is an isomorphism \cite[\href{https://stacks.math.columbia.edu/tag/073H}{Tags 073H} \& \href{https://stacks.math.columbia.edu/tag/09TF}{09TF}]{StacksProject}.
    Choose some $y^\prime \in \operatorname{Supp}(t_\ast \mathcal{O}_{\operatorname{Spec}(k)})$.
    Let $u\in \operatorname{Supp}(\mathbf{L}s^\ast \mathbf{R}t_\ast \mathcal{O}_{\operatorname{Spec}(k)})$ such that $|s|(u)=y^\prime$. 
    Consider the natural morphism $h\colon \operatorname{Spec}(\mathcal{O}_{U,u})\to U$.
    There exists a fibered square 
    \begin{displaymath}
        \begin{tikzcd}
            {\operatorname{Spec}(\mathcal{O}_{U,u})\times_X \operatorname{Spec}(k)} & {\operatorname{Spec}(k)} \\
            {\operatorname{Spec}(\mathcal{O}_{U,u})} & {X.}
            \arrow["{h^\prime}", from=1-1, to=1-2]
            \arrow["{t^\prime}"', from=1-1, to=2-1]
            \arrow["t", from=1-2, to=2-2]
            \arrow["{s\circ h}"', from=2-1, to=2-2]
        \end{tikzcd}
    \end{displaymath}
    Base change implies $t^\prime$ is affine. 
    Moreover, by flat base change, 
    \begin{displaymath}
        0
        \not\cong \mathbf{L}(s\circ h)^\ast \mathbf{R}t_\ast \mathcal{O}_{\operatorname{Spec}(k)} 
        \cong \mathbf{R}t^\prime_\ast \mathcal{O}_{\operatorname{Spec}(\mathcal{O}_{U,u})\times_X \operatorname{Spec}(k)}.
    \end{displaymath}
    As $t^\prime$ is affine, and affineness is preserved under base change, $\mathcal{O}_{\operatorname{Spec}(\mathcal{O}_{U,u})\times_X \operatorname{Spec}(k)}$ is nonzero.
    Hence, $\operatorname{Spec}(\mathcal{O}_{U,u})\times_X \operatorname{Spec}(k)$ is not empty.
    Choose some $q\in |\operatorname{Spec}(\mathcal{O}_{U,u})\times_X \operatorname{Spec}(k)|$. 
    Then $|t^\prime|(q) \rightsquigarrow u$. 
    Since $s\circ h$ preserves specializations, it follows $y\rightsquigarrow y^\prime$. 
    However, $S$ is specialization closed, and so $y^\prime \in S$. 
\end{proof}

\begin{lemma}
    \label{lem:Bousfield}
    Every $\otimes$-localizing subcategory $\mathcal{T}$ of $D_{\operatorname{qc}}(X)$ is an $\otimes$-Bousfield subcategory (i.e.\ the Verdier localization functor $D_{\operatorname{qc}}(X) \to D_{\operatorname{qc}}(X)/\mathcal{T}$ admits a right adjoint). 
\end{lemma}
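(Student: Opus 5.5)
By \Cref{thm:point_generated_classification}, $\mathcal{T}=\theta(S)$ for some subset $S\subseteq|X|$. Hence $\mathcal{T}$ is the localizing subcategory generated by the \emph{set} $\mathcal{G}:=\{\mathbf{R}(t_s)_\ast\mathcal{O}_{\operatorname{Spec}(k)}\}_{s\in S}$. The plan is to use this to invoke Brown representability for the inclusion $\iota\colon\mathcal{T}\to D_{\operatorname{qc}}(X)$.

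First recall that $D_{\operatorname{qc}}(X)$ is compactly generated (\cite[\href{https://stacks.math.columbia.edu/tag/09M8}{Tag 09M8}, \href{https://stacks.math.columbia.edu/tag/09IU}{09IU}]{StacksProject}), and so it is well generated. The key input is a theorem of Neeman, \cite[Theorem 1.14 / Proposition 1.21]{Neeman:2000} in the book \emph{Triangulated categories}: in a well generated triangulated category, the localizing subcategory generated by a set of objects is again well generated and admits a Bousfield localization. In other words, $\iota$ admits a right adjoint $\iota^{!}$, obtained by Brown representability for well generated categories applied to the coproduct-preserving functor $\iota$. Granting $\iota^{!}$, the standard argument gives, for each $E$, a triangle $\iota\iota^{!}E\to E\to L E\to$ with $LE\in\mathcal{T}^{\perp}$. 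The quotient $D_{\operatorname{qc}}(X)/\mathcal{T}$ is then equivalent to $\mathcal{T}^{\perp}$, and the inclusion $\mathcal{T}^{\perp}\to D_{\operatorname{qc}}(X)$ is right adjoint to the Verdier quotient functor. This is exactly the $\otimes$-Bousfield property. The $\otimes$-ideal condition on $\mathcal{T}$ is already part of the hypothesis, and \Cref{lem:independent_of_representative} confirms that $\theta(S)$ is $\otimes$-closed.

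The main obstacle is justifying that $\mathcal{T}$, generated by a set of non-compact objects, admits a Bousfield localization. I would cite Neeman's result that localizing subcategories generated by a set in a well generated category are well generated (equivalently, Krause's work on cohomological quotients / Brown representability). An alternative route is to observe that $D_{\operatorname{qc}}(X)$ is the homotopy category of a locally presentable stable model category, by \cite[\href{https://stacks.math.columbia.edu/tag/09M8}{Tag 09M8}]{StacksProject} together with the Grothendieck abelian structure on $\operatorname{Qcoh}$. One would then apply left Bousfield localization at the set $\mathcal{G}$ (in the dual form, cellularization). The first route is cleaner, so I would take it, and only remark that the set-generation supplied by the classification theorem is what makes it applicable.
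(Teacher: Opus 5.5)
Your proposal is correct and is essentially the paper's argument: \Cref{thm:point_generated_classification} shows $\mathcal{T}$ is generated by a set, $D_{\operatorname{qc}}(X)$ is compactly generated, and Neeman's existence theorem for Bousfield localizations of set-generated localizing subcategories finishes the proof. The only fix needed is bibliographic: the relevant source is Neeman's book \cite{Neeman:2001a} (the paper uses Remark 1.16, Theorem 1.17 and Proposition 9.1.19 there), whereas \cite{Neeman:2000} in this bibliography is the different work cited in the introduction for the failure of the classification beyond the Noetherian case.
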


\begin{proof}
    By \Cref{thm:point_generated_classification}, $\mathcal{T}$ is generated by a set. 
    Moreover, \cite[\href{https://stacks.math.columbia.edu/tag/09IY}{Tag 09IY}]{StacksProject} says $D_{\operatorname{qc}}(X)$ is compactly generated. 
    It follows from \cite[Remark 1.16, Theorem 1.17 \& Proposition 9.1.19]{Neeman:2001a} that a Bousfield localization exists for $\mathcal{T} \subseteq D_{\operatorname{qc}}(X)$. 
    Hence, $\mathcal{T}$ is an $\otimes$-Bousfield subcategory.
\end{proof}

\begin{remark}
    Any $\otimes$-Bousfield subcategory $\mathcal{B} \subseteq D_{\operatorname{qc}}(X)$ is $\otimes$-localizing. 
    Indeed, the Verdier localization functor $D_{\operatorname{qc}}(X) \to D_{\operatorname{qc}}(X) /\mathcal{B}$ admits a right adjoint, and hence preserves small coproducts. 
    Thus, $\otimes$-Bousfield subcategories coincide with $\otimes$-localizing subcategories for point-generated algebraic spaces.
\end{remark}

Let $E\in D_{\operatorname{qc}}(X)$ and $S\subseteq |X|$ be a Thomason subset. 
Choose an \'{e}tale presentation $s\colon U \to X$ by an affine scheme. 
Suppose $t_s \colon \operatorname{Spec}(k_s) \to X$ is a representative for each $s\in S$.
Set $\mathcal{T}$ to be 
\begin{displaymath}
    \overline{\langle \{ \mathbf{R}(t_s)_\ast \mathcal{O}_{\operatorname{Spec}(k_s)} \}_{s\in S} \rangle}.
\end{displaymath}
Denote by $h_u \colon \operatorname{Spec}(\kappa(u))\to U$ the natural morphism for each $u\in |U|$. 
Denote by $L_S$ the right adjoint to the Verdier localization $\pi\colon D_{\operatorname{qc}}(X) \to D_{\operatorname{qc}}(X)/\mathcal{T}$. 
This exists by \Cref{lem:Bousfield}. 
By \cite[Proposition 9.1.8]{Neeman:2001a}, there exists a distinguished triangle
\begin{equation}
    \label{eq:Bousfield_triangles}
    \Gamma_S (E) \to E \xrightarrow{\eta_E} (L_S \circ \pi)(E) \to \Gamma_S (E)[1]
\end{equation}
with $\eta_E$ the unit of $\pi \dashv L_S$, $\Gamma_S (E)\in \mathcal{T}$, and $(L_S \circ \pi)(E)\in \mathcal{T}^\perp$. 

\begin{lemma}
    \label{lem:coaisle_is_localizing}
    $\mathcal{T}$ is compactly generated, and $\mathcal{T}^\perp$ is an $\otimes$-localizing subcategory.
\end{lemma}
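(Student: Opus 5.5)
We need to prove two things: $\mathcal{T}$ is compactly generated, and $\mathcal{T}^\perp$ is $\otimes$-localizing. The plan is to identify $\mathcal{T}$ with a subcategory of objects supported on $S$, and then use the compact generation of such subcategories for Thomason subsets.

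\textbf{Compact generation of $\mathcal{T}$.} Since $S$ is Thomason, it is specialization closed. By \Cref{lem:supported_Bousfield} with $S^\prime=S$, together with the independence of representatives from \Cref{lem:independent_of_representative}, we get
\begin{displaymath}
    \mathcal{T} = D_{\operatorname{qc},S}(X).
\end{displaymath}
For a Thomason subset $S$ of a quasi-compact quasi-separated algebraic space, $D_{\operatorname{qc},S}(X)$ is compactly generated by $\operatorname{Perf}(X)\cap D_{\operatorname{qc},S}(X)$. This is the Thomason condition for algebraic spaces used by Hall--Rydh \cite{Hall/Rydh:2017a}. Concretely, write $S=\bigcup_i Z_i$ with each $Z_i$ closed and $|X|\setminus Z_i$ quasi-compact. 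By the Stacks Project (Tag 08HP and its relatives for algebraic spaces), each $D_{\operatorname{qc},Z_i}(X)$ is generated by a single perfect complex $P_i$ supported on $Z_i$.

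Any $E\in D_{\operatorname{qc},S}(X)$ with $\operatorname{Hom}(P_i[n],E)=0$ for all $i$ and $n$ must vanish. To see this, use the local cohomology triangles $\Gamma_{Z_i}E\to E\to \mathbf{R}j_{i\ast}j_i^\ast E$. These give $\Gamma_{Z_i}E=0$, and one then checks that $E=\operatorname{hocolim}\Gamma_{Z_i}E$ over finite unions, as in Hall--Rydh. Since the $P_i$ are compact in $D_{\operatorname{qc}}(X)$ and lie in $\mathcal{T}$, they are compact in $\mathcal{T}$. Hence $\mathcal{T}=\overline{\langle \{P_i\}\rangle}$.

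The main obstacle is this identification together with the generation statement for arbitrary Thomason $S$, as opposed to a single closed subset with quasi-compact complement. The first thing I would try is to cite Hall--Rydh's proof that quasi-compact quasi-separated algebraic spaces satisfy the Thomason condition. If that cannot be cited directly, the fallback is the hocolim argument above.

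\textbf{$\mathcal{T}^\perp$ is $\otimes$-localizing.} It is clearly triangulated. Closure under coproducts follows because $\mathcal{T}$ is generated by the compact objects $P_i$: we have $\mathcal{T}^\perp=\{P_i[n]\}^\perp$, and $\operatorname{Hom}(P_i[n],-)$ commutes with small coproducts.

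For the tensor action, take $F\in D_{\operatorname{qc}}(X)$ and $E\in\mathcal{T}^\perp$. Then
\begin{displaymath}
    \operatorname{Hom}(P_i[n],F\otimes^{\mathbf{L}}E)\cong\operatorname{Hom}(P_i^\vee\otimes^{\mathbf{L}}P_i[n],F\otimes^{\mathbf{L}}E)
\end{displaymath}
is not immediately useful, so a different approach is needed. The class of $F$ for which $F\otimes^{\mathbf{L}}E\in\mathcal{T}^\perp$ is localizing, by \Cref{lem:inverse_image_localizing} applied to $-\otimes^{\mathbf{L}} E$. So it suffices to treat $F$ perfect, since perfect complexes compactly generate $D_{\operatorname{qc}}(X)$. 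For $F$ perfect,
\begin{displaymath}
    \operatorname{Hom}(P,F\otimes^{\mathbf{L}}E)\cong\operatorname{Hom}(P\otimes^{\mathbf{L}}F^\vee,E).
\end{displaymath}
This vanishes for $P\in\mathcal{T}$, because $\mathcal{T}$ is $\otimes$-closed by \Cref{lem:independent_of_representative}.
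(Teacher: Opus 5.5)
Your argument is correct. The first half matches the paper. You get $\mathcal{T}=D_{\operatorname{qc},S}(X)$ from \Cref{lem:supported_Bousfield}. You then treat compact generation of $D_{\operatorname{qc},S}(X)$ by perfect complexes supported in $S$ as an external input: you cite Hall--Rydh's Thomason condition, while the paper cites the classification of Thomason filtrations in Hrbek--Lank--Pizzirani. After that, closure of $\mathcal{T}^\perp$ under coproducts is the same compactness argument in both. If you actually run your fallback sketch, you need $\Gamma_Z E=0$ for every finite union $Z$ of the $Z_i$, not just for each $Z_i$. You can get this by taking a generator $P_Z$ for each such $Z$, which is again closed with quasi-compact complement, or from Mayer--Vietoris.

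The tensor step is where you genuinely diverge. The paper fixes one compact generator $G$. It shows by adjunction that $\operatorname{\mathbf{R}\mathcal{H}\! \mathit{om}}(G,-)\cong G^\vee\otimes^{\mathbf{L}}(-)$ preserves $\mathcal{T}^\perp$, using that $D_{\operatorname{qc},S}(X)$ is a $\otimes$-ideal. It then notes that $G^\vee$ is again a compact generator and invokes \Cref{lem:tensor_to_normal} to get $\overline{\langle A\rangle}_{\otimes}=\overline{\langle G^\vee\otimes^{\mathbf{L}}A\rangle}\subseteq\mathcal{T}^\perp$. That lemma itself rests on the presentation from Tag 09SN and an induction over $\overline{\langle G\rangle}_n$.

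You instead observe that $\{F : F\otimes^{\mathbf{L}}E\in\mathcal{T}^\perp\}$ is localizing by \Cref{lem:inverse_image_localizing}. It contains every perfect $F$, by $\operatorname{Hom}(P,F\otimes^{\mathbf{L}}E)\cong\operatorname{Hom}(P\otimes^{\mathbf{L}}F^\vee,E)$ and the $\otimes$-closure of $\mathcal{T}$. Hence it is all of $D_{\operatorname{qc}}(X)=\overline{\langle\operatorname{Perf}(X)\rangle}$.

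Both routes rest on the same idea: dualizability of perfect complexes carries the $\otimes$-ideal property of $\mathcal{T}$ over to $\mathcal{T}^\perp$. Your version is shorter and avoids \Cref{lem:tensor_to_normal} entirely. It is also purely formal. Once $\mathcal{T}^\perp$ is known to be closed under coproducts, it works in any tensor triangulated category generated by dualizable objects, with no appeal to a single generator or to the level filtration $\overline{\langle G\rangle}_n$.
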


\begin{proof}
    Since $S$ is Thomason, \Cref{lem:supported_Bousfield} implies $D_{\operatorname{qc},S}(X)=\mathcal{T}$. 
    Then \cite{Hrbek/Lank/Pizzirani:2025} implies $\mathcal{T}$ corresponds to the Thomason filtration given by the rule $n\mapsto S$ for all integers $n$ (see loc.\ cit.\ for terminology).
    In particular, this means $\mathcal{T}$ is compactly generated by perfect complexes whose supports are contained in $S$. 
    Since $\mathcal{T}$ is compactly generated, $\mathcal{T}^\perp$ is closed under small coproducts. 
    Indeed, let $E_i \in \mathcal{T}^\perp$ with $i\in I$ for some set $I$. 
    For each $P\in \operatorname{Perf}(X) \cap D_{\operatorname{qc},S}(X)$, we have 
    \begin{displaymath}
        \operatorname{Hom}(P,\bigoplus_{i\in I}E_i)\cong \bigoplus_{i\in I} \operatorname{Hom}(P,E_i) \cong 0.
    \end{displaymath}
    This uses the fact that each $P$ is compact in $D_{\operatorname{qc}}(X)$. 
    This shows $\mathcal{T}^\perp$ is localizing. 
    Choose a compact generator $G$. 
    Then $\operatorname{\mathbf{R}\mathcal{H}\! \mathit{om}}(G,-)$ yields an endofunctor on $\mathcal{T}^\perp$. 
    Indeed, for each $B\in D_{\operatorname{qc},S}(X)$ and $A\in \mathcal{T}^\perp$, we have $G\otimes^{\mathbf{L}} B\in D_{\operatorname{qc},S}(X)$, and so 
    \begin{displaymath}
        \operatorname{Hom}(G\otimes^{\mathbf{L}}B,A)\cong \operatorname{Hom}(B, \operatorname{\mathbf{R}\mathcal{H}\! \mathit{om}} (G,A))\cong 0. 
    \end{displaymath}
    By \cite[\href{https://stacks.math.columbia.edu/tag/0A8A}{Tag 0A8A}]{StacksProject}, 
    \begin{displaymath}
        \operatorname{\mathbf{R}\mathcal{H}\! \mathit{om}} (G,A) 
        \cong \operatorname{\mathbb{R}\mathcal{H}\! \mathit{om}} (G,A)
    \end{displaymath}
    where the latter is the internal Hom for $D(X)$. 
    By \cite[\href{https://stacks.math.columbia.edu/tag/0A8A}{Tags 0A8A} \& \href{https://stacks.math.columbia.edu/tag/08JJ}{08JJ}]{StacksProject} we obtain an isomorphism
    \begin{displaymath}
        \operatorname{\mathbb{R}\mathcal{H}\! \mathit{om}} (G,A) 
        \cong \operatorname{\mathbb{R}\mathcal{H}\! \mathit{om}} (G,\mathcal{O}_X) \otimes^{\mathbf{L}} A.
        \cong \operatorname{\mathbf{R}\mathcal{H}\! \mathit{om}} (G,\mathcal{O}_X) \otimes^{\mathbf{L}} A.
    \end{displaymath}
    Now $\operatorname{\mathbf{R}\mathcal{H}\! \mathit{om}} (G,\mathcal{O}_X)$ is a compact generator. 
    By \Cref{lem:tensor_to_normal}, 
    \begin{displaymath}
        \overline{\langle A \rangle}_{\otimes} 
        = \overline{\langle \operatorname{\mathbf{R}\mathcal{H}\! \mathit{om}} (G,\mathcal{O}_X) \otimes^{\mathbf{L}} A \rangle}.
    \end{displaymath}
    Since $\mathcal{T}^\perp$ is localizing, we have that $M\otimes^{\mathbf{L}} A \in \mathcal{T}^\perp$ for all $M\in D_{\operatorname{qc}}(X)$.
    Thus, $\mathcal{T}^\perp$ is $\otimes$-localizing.
\end{proof}

\begin{lemma}
    \label{lem:pts_in_inverse_localizing_subcategory}
    $\mathbf{R}(h_u)_\ast \mathcal{O}_{\operatorname{Spec}(\kappa(u))} \in s^{-1} \mathcal{T}$ if, and only if, $u \in |s|^{-1}(S)$.
\end{lemma}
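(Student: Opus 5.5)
Throughout, $s^{-1}\mathcal{T}$ means the preimage of $\mathcal{T}$ under $\mathbf{R}s_\ast$. This is the convention of \Cref{lem:localizing_is_generated_by_fields}, and by \Cref{lem:inverse_image_localizing} it is localizing in $D_{\operatorname{qc}}(U)$. Since $\mathbf{R}s_\ast \mathbf{R}(h_u)_\ast \mathcal{O}_{\operatorname{Spec}(\kappa(u))} \cong \mathbf{R}(s\circ h_u)_\ast \mathcal{O}_{\operatorname{Spec}(\kappa(u))}$, and $s\circ h_u$ represents $|s|(u)$ \cite[Lemma 2.4]{GuisadoVillaalgordo/Lank:2026}, the statement reduces to the following claim: for $t_u := s\circ h_u$,
\begin{displaymath}
    \mathbf{R}(t_u)_\ast \mathcal{O}_{\operatorname{Spec}(\kappa(u))} \in \mathcal{T} \iff |s|(u) \in S .
\end{displaymath}

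For the direction ``$\Leftarrow$'', suppose $p := |s|(u) \in S$. By \Cref{lem:independent_of_representative}, $\mathcal{T}$ does not depend on the chosen representatives, so we may take $t_p = t_u$ as the representative of $p$. Then $\mathbf{R}(t_u)_\ast \mathcal{O}_{\operatorname{Spec}(\kappa(u))}$ is one of the generators of $\mathcal{T}$, and in particular lies in $\mathcal{T}$.

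For the direction ``$\Rightarrow$'', I would use \Cref{thm:point_generated_classification}, which applies because $X$ is point-generated throughout this section. By definition of $\phi$, the hypothesis says exactly that $|s|(u) \in \phi(\mathcal{T})$. By \Cref{lem:independent_of_representative}, $\mathcal{T} = \theta(S)$ with $\theta$ as in \Cref{thm:point_generated_classification}. Since $\phi\circ\theta = 1$, we get $\phi(\mathcal{T}) = \phi(\theta(S)) = S$, hence $|s|(u) \in S$.

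An alternative for ``$\Rightarrow$'' uses that $S$ is Thomason. Then \Cref{lem:supported_Bousfield} gives $\mathcal{T} = D_{\operatorname{qc},S}(X)$, so membership forces $\operatorname{Supp}(\mathbf{R}(t_u)_\ast \mathcal{O}_{\operatorname{Spec}(\kappa(u))}) \subseteq S$. The argument in the proof of \Cref{lem:supported_Bousfield} shows that the image point $|s|(u)$ lies in this support, which again gives $|s|(u) \in S$.

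The only real subtlety is the bookkeeping: being careful that $s^{-1}$ means the preimage under $\mathbf{R}s_\ast$, and that $s\circ h_u$ genuinely represents $|s|(u)$. Once the classification is available, nothing else is hard.
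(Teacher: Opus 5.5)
Your proof is correct and follows the paper's argument. You use $\mathbf{R}s_\ast \mathbf{R}(h_u)_\ast \cong \mathbf{R}(s\circ h_u)_\ast$ to reduce to whether $\mathbf{R}(s\circ h_u)_\ast \mathcal{O}_{\operatorname{Spec}(\kappa(u))}$ lies in $\mathcal{T}$, then invoke \Cref{thm:point_generated_classification} ($\phi\circ\theta=1$) for the forward direction and independence of representatives (\Cref{lem:independent_of_representative}) for the converse. Your alternative forward argument via \Cref{lem:supported_Bousfield} also works, but it is not independent, since that lemma's proof itself uses the classification.
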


\begin{proof}
    If $\mathbf{R}(h_u)_\ast \mathcal{O}_{\operatorname{Spec}(\kappa(u))} \in s^{-1} \mathcal{T}$, then $\mathbf{R}(s\circ h_u)_\ast \mathcal{O}_{\operatorname{Spec}(\kappa(u))} \in \mathcal{T}$.
    By \Cref{thm:point_generated_classification}, $s\circ h_u$ represents a point which belongs to $S$. 
    Hence, $u\in |s|^{-1}(S)$. 
    Conversely, let $u\in |s|^{-1}(S)$. 
    Then $\mathbf{R}(s\circ h_u)_\ast \mathcal{O}_{\operatorname{Spec}(\kappa(u))} \in \mathcal{T}$.
    Since 
    \begin{displaymath}
        \mathbf{R}(s\circ h_u)_\ast \mathcal{O}_{\operatorname{Spec}(\kappa(u))}
        \cong \mathbf{R}s_\ast \mathbf{R}(h_u)_\ast \mathcal{O}_{\operatorname{Spec}(\kappa(u))},
    \end{displaymath}
    we are done. 
\end{proof}

\begin{lemma}
    \label{lem:aisle_to_aisle}
    $\mathbf{L}s^\ast \mathcal{T} \subseteq s^{-1}(\mathcal{T})$.
\end{lemma}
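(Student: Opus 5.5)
The inclusion should follow directly from the projection formula and the fact that $\mathcal{T}$ is closed under the tensor action. Recall that $s^{-1}\mathcal{T}$ is the strictly full subcategory of objects $F \in D_{\operatorname{qc}}(U)$ with $\mathbf{R}s_\ast F \in \mathcal{T}$. So, for a fixed $E \in \mathcal{T}$, it suffices to show that $\mathbf{R}s_\ast \mathbf{L}s^\ast E \in \mathcal{T}$.

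First, I will note that $\mathcal{T}$ is $\otimes$-localizing. By definition, $\mathcal{T} = \overline{\langle \{ \mathbf{R}(t_s)_\ast \mathcal{O}_{\operatorname{Spec}(k_s)} \}_{s\in S} \rangle}$, and \Cref{lem:independent_of_representative} says this localizing subcategory coincides with the $\otimes$-localizing subcategory generated by the same objects. In particular, $A \otimes^{\mathbf{L}} E \in \mathcal{T}$ for every $A \in D_{\operatorname{qc}}(X)$.

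Second, I will apply the projection formula for the quasi-compact quasi-separated morphism $s\colon U \to X$ of quasi-compact quasi-separated algebraic spaces. This is the same formula already used in the proofs of \Cref{lem:descent} and \Cref{lem:localizing_is_generated_by_fields}, and it gives
\begin{displaymath}
    \mathbf{R}s_\ast \mathbf{L}s^\ast E \cong \mathbf{R}s_\ast (\mathcal{O}_U \otimes^{\mathbf{L}} \mathbf{L}s^\ast E) \cong \mathbf{R}s_\ast \mathcal{O}_U \otimes^{\mathbf{L}} E .
\end{displaymath}
Taking $A = \mathbf{R}s_\ast \mathcal{O}_U \in D_{\operatorname{qc}}(X)$ in the first step, the right-hand side lies in $\mathcal{T}$. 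Hence $\mathbf{L}s^\ast E \in s^{-1}\mathcal{T}$.

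There is essentially no obstacle here. The only point to check is that the projection formula applies to $s$, which holds because $s$ is a quasi-compact quasi-separated morphism (indeed \'{e}tale with affine source) between quasi-compact quasi-separated algebraic spaces. Notably, the argument does not use that $S$ is Thomason.
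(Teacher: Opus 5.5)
Your proof is correct, and it takes a genuinely different and much shorter route than the paper. You read $s^{-1}\mathcal{T}$ as $(\mathbf{R}s_\ast)^{-1}\mathcal{T}$, which matches the paper's convention from \Cref{lem:inverse_image_localizing}. You then need only two facts: $\mathcal{T}$ is closed under the tensor action (\Cref{lem:independent_of_representative}), and the projection formula gives $\mathbf{R}s_\ast\mathbf{L}s^\ast E \cong \mathbf{R}s_\ast\mathcal{O}_U \otimes^{\mathbf{L}} E$.

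The paper argues support-theoretically instead. It uses that $U$ is point-generated and applies \Cref{thm:point_generated_classification} to the localizing subcategory $s^{-1}\mathcal{T}$. It identifies the corresponding subset as exactly $|s|^{-1}(S)$ via \Cref{lem:pts_in_inverse_localizing_subcategory}. It then invokes \Cref{lem:supported_Bousfield}, which is where the Thomason (hence specialization closed) hypothesis enters, to get $s^{-1}\mathcal{T} = D_{\operatorname{qc},|s|^{-1}(S)}(U)$. It concludes because $\mathbf{L}s^\ast$ carries $D_{\operatorname{qc},S}(X)=\mathcal{T}$ into $D_{\operatorname{qc},|s|^{-1}(S)}(U)$.

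Your route buys generality and economy. It works for any quasi-compact quasi-separated morphism and any localizing subcategory closed under the tensor action. It needs no point-generatedness, classification, or Thomason hypothesis, as you observe.

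The paper's heavier route buys more than the stated inclusion: an explicit identification of $s^{-1}\mathcal{T}$ with the localizing subcategory attached to $|s|^{-1}(S)$. That identification is what lets the final lemma speak of the triangle of $\mathbf{L}s^\ast E$ ``with respect to $|s|^{-1}(S)$'', and it is also used implicitly in \Cref{lem:supported_coaisle}. If you use your proof, that identification must be supplied separately, for instance from \Cref{lem:pts_in_inverse_localizing_subcategory} together with the classification on $U$.
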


\begin{proof}
    By \Cref{lem:inverse_image_localizing}, $s^{-1}(\mathcal{T})$ is localizing. 
    Since $U$ is affine, every localizing subcategory is tensor localizing. 
    By \Cref{thm:point_generated_classification}, there exists $S^\prime \subseteq |U|$ such that 
    \begin{displaymath}
        s^{-1}\mathcal{T} 
        = \overline{\langle \{ \mathbf{R}(h_u)_\ast \mathcal{O}_{\operatorname{Spec}(\kappa(u))} \}_{u\in S^\prime}\rangle }.
    \end{displaymath}
    By \Cref{lem:pts_in_inverse_localizing_subcategory}, $S^\prime \subseteq |s|^{-1}(S)$.
    On the other hand, \Cref{lem:pts_in_inverse_localizing_subcategory} implies $|s|^{-1}(S) \subseteq S^\prime$.
    Since $|s|^{-1}(S)$ is Thomason (as it is the preimage of one), \Cref{lem:supported_Bousfield} implies $D_{\operatorname{qc},|s|^{-1}(S)}(U) = s^{-1}\mathcal{T}$.
    Since $\mathbf{L}s^\ast$ preserves support, it follows that $\mathbf{L}s^\ast D_{\operatorname{qc},S}(X) \subseteq D_{\operatorname{qc},|s|^{-1}(S)}(U)$.
\end{proof}

\begin{lemma}
    \label{lem:supported_coaisle}
    $\mathbf{R}(h_u)_\ast \mathcal{O}_{\operatorname{Spec}(\kappa(u))} \in (s^{-1} \mathcal{T})^\perp$ if, and only if, $u \in |U|\setminus |s|^{-1}(S)$.
\end{lemma}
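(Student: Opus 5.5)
We will prove the two implications separately. By the proof of \Cref{lem:aisle_to_aisle}, $s^{-1}\mathcal{T} = D_{\operatorname{qc},|s|^{-1}(S)}(U)$. This localizing subcategory is generated by $\{\mathbf{R}(h_v)_\ast \mathcal{O}_{\operatorname{Spec}(\kappa(v))}\}_{v\in |s|^{-1}(S)}$, and $|s|^{-1}(S)$ is Thomason, hence specialization closed. Write $k_u := \mathbf{R}(h_u)_\ast \mathcal{O}_{\operatorname{Spec}(\kappa(u))}$, which is the residue field $\kappa(u)$ viewed as a module over the affine scheme $U=\operatorname{Spec}(R)$. An object lies in $(s^{-1}\mathcal{T})^\perp$ if and only if it receives no nonzero maps from any shift of any generator $k_v$ with $v\in |s|^{-1}(S)$. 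Indeed, the left orthogonal of a fixed object is localizing, so it suffices to test on generators.

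For the forward direction, suppose $u\in |s|^{-1}(S)$. By \Cref{lem:pts_in_inverse_localizing_subcategory}, $k_u\in s^{-1}\mathcal{T}$. If also $k_u\in(s^{-1}\mathcal{T})^\perp$, then $\operatorname{Hom}(k_u,k_u)=0$, so $k_u\cong 0$. This is absurd, since $\kappa(u)\neq 0$. Hence $u\notin |s|^{-1}(S)$.

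For the converse, let $u\notin |s|^{-1}(S)$ and $v\in |s|^{-1}(S)$. We must show $\operatorname{Hom}_{D(R)}(\kappa(v),\kappa(u)[n])=0$ for all $n$. Since $|s|^{-1}(S)$ is specialization closed and $u$ lies outside it, $u$ is not a specialization of $v$; that is, $\mathfrak p_v\not\subseteq \mathfrak p_u$. Choose $f\in \mathfrak p_v\setminus \mathfrak p_u$. Multiplication by $f$ is zero on $\kappa(v)$ and invertible on $\kappa(u)$. It acts on $\operatorname{RHom}_R(\kappa(v),\kappa(u))$ through either argument, and the two actions agree by $R$-linearity. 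So $f$ acts both as $0$ and as an automorphism on each cohomology group, and every $\operatorname{Ext}^n$ vanishes. Therefore $k_u$ is right orthogonal to every generator of $s^{-1}\mathcal{T}$, hence to all of $s^{-1}\mathcal{T}$.

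The main point requiring care is the specialization-closed input. One has to check that a Thomason subset is closed under specialization and that $|s|^{-1}$ preserves this, which holds because $s$ is continuous. The other subtle step is reducing orthogonality to generators, which is the standard localizing-subcategory argument applied to $\{E : \operatorname{Hom}(E[n],k_u)=0 \ \forall n\}$.
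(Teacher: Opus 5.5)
Your proof is correct, and it differs from the paper's mainly in the ``if'' direction. Your ``only if'' argument is the contradiction the paper leaves implicit at the end of its proof: by \Cref{lem:pts_in_inverse_localizing_subcategory}, $k_u\in s^{-1}\mathcal{T}$, so membership in the orthogonal would force $\operatorname{Hom}(k_u,k_u)=0$.

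For the other direction the paper goes through more machinery. It first applies \Cref{lem:coaisle_is_localizing} to $U$ to see that $(s^{-1}\mathcal{T})^\perp$ is $\otimes$-localizing. It then uses \Cref{thm:point_generated_classification} to write that orthogonal as generated by residue fields over some $S'\subseteq|U|$. Finally it cites \cite[Proposition 5.10]{Alonso/Jeremias/Loureiro:2024}, via colocalizing subcategories, for $k_u\in(s^{-1}\mathcal{T})^\perp$ when $u\notin|s|^{-1}(S)$, and concludes $S'=|U|\setminus|s|^{-1}(S)$.

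You instead take the generators $k_v$, $v\in|s|^{-1}(S)$, of $s^{-1}\mathcal{T}$ from the proof of \Cref{lem:aisle_to_aisle}. Since the left orthogonal of a single object is localizing, you may test only against these generators. You then kill $\operatorname{Ext}^n_R(\kappa(v),\kappa(u))$ using an element $f\in\mathfrak p_v\setminus\mathfrak p_u$, which exists because $|s|^{-1}(S)$ is specialization closed.

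What each route buys:
\begin{itemize}
\item Your route is elementary and self-contained. It needs neither the external citation nor a second use of the classification. It also does not need $(s^{-1}\mathcal{T})^\perp$ to be closed under coproducts, which in the paper rests on compact generation of $s^{-1}\mathcal{T}$ via \cite{Hrbek/Lank/Pizzirani:2025}.
\item The paper's route proves more than the statement: it gives the full description $(s^{-1}\mathcal{T})^\perp=\overline{\langle\{\mathbf{R}(h_u)_\ast\mathcal{O}_{\operatorname{Spec}(\kappa(u))}\}_{u\notin|s|^{-1}(S)}\rangle}$.
\end{itemize}
For the later use in \Cref{lem:coaisle_to_coaisle}, your membership statement combined with \Cref{lem:coaisle_is_localizing} already suffices.
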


\begin{proof}
    Applying \Cref{lem:coaisle_is_localizing} to the point-generated scheme $U$ (see \Cref{prop:ascent_descent_pt_generated}) and the Thomason subset $|s|^{-1}(S)$, it follows that $(s^{-1}\mathcal{T})^\perp$ is $\otimes$-localizing. 
    By \Cref{thm:point_generated_classification}, there exists $S^\prime\subseteq |U|$ such that 
    \begin{displaymath}
        (s^{-1}\mathcal{T} )^\perp
        = \overline{\langle \{ \mathbf{R}(h_u)_\ast \mathcal{O}_{\operatorname{Spec}(\kappa(u))} \}_{u\in S^\prime}\rangle }.
    \end{displaymath}
    By \cite[Proposition 5.10]{Alonso/Jeremias/Loureiro:2024}, 
    \begin{displaymath}
        \mathbf{R}(h_u)_\ast \mathcal{O}_{\operatorname{Spec}(\kappa(u))} \in (s^{-1}\mathcal{T} )^\perp
    \end{displaymath}
    for all $u \in |U|\setminus |s|^{-1}(S)$ (e.g.\ see the proof of loc.\ cit.\ and the notion of colocalizing subcategories).
    Thus, $S^\prime = |U|\setminus |s|^{-1}(S)$ since any $u \in S^{\prime} \setminus (|U|\setminus |s|^{-1}(S))$ belongs to $|s|^{-1}(S)$, and we would have a contradiction.
\end{proof}

\begin{lemma}
    \label{lem:coaisle_to_coaisle}
    $\mathbf{L}s^\ast \mathcal{T}^\perp \subseteq (s^{-1}\mathcal{T})^\perp$.
\end{lemma}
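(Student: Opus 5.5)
The plan is to use the classification on $U$ and reduce to a statement about residue fields, in the same style as \Cref{lem:aisle_to_aisle}. First, $\mathbf{L}s^\ast$ preserves small coproducts and is exact, so by \Cref{lem:inverse_image_localizing} the preimage $(\mathbf{L}s^\ast)^{-1}\big((s^{-1}\mathcal{T})^\perp\big)$ is localizing in $D_{\operatorname{qc}}(X)$. By \Cref{lem:coaisle_is_localizing}, $\mathcal{T}^\perp$ is $\otimes$-localizing. \Cref{thm:point_generated_classification} then gives $\mathcal{T}^\perp = \overline{\langle \{\mathbf{R}(t_p)_\ast \mathcal{O}_{\operatorname{Spec}(k_p)}\}_{p\in P}\rangle}$ for some $P\subseteq |X|$. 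Hence it suffices to show $\mathbf{L}s^\ast \mathbf{R}(t_p)_\ast \mathcal{O}_{\operatorname{Spec}(k_p)} \in (s^{-1}\mathcal{T})^\perp$ for each $p\in P$.

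Next, identify $P$. Every point of $P$ lies outside $S$. Indeed, if $p\in P\cap S$, then $\mathbf{R}(t_p)_\ast\mathcal{O}_{\operatorname{Spec}(k_p)}$ lies in both $\mathcal{T}$ and $\mathcal{T}^\perp$, so its identity map vanishes. This contradicts nonvanishing of $\mathbf{R}(t_p)_\ast\mathcal{O}_{\operatorname{Spec}(k_p)}$, which follows from affineness of $t_p$ (\cite[Tag 09TF]{StacksProject}) and \cite[Lemma 3.2]{GuisadoVillaalgordo/Lank:2026}. So $P\subseteq |X|\setminus S$.

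For $p\in P$, choose the representative as in the proof of \Cref{thm:point_generated_classification}, which \Cref{lem:independent_of_representative} allows. Form the fibre square with $W_p := U\times_X \operatorname{Spec}(k_p)$ and $a_p\colon W_p\to U$. Flat base change gives $\mathbf{L}s^\ast\mathbf{R}(t_p)_\ast\mathcal{O} \cong \mathbf{R}(a_p)_\ast\mathcal{O}_{W_p}$. Since $W_p$ is affine Noetherian, Neeman's theorem applied via \Cref{lem:inverse_image_localizing}, exactly as in \eqref{eq:fiber-point-generators}, gives
\begin{displaymath}
    \mathbf{R}(a_p)_\ast\mathcal{O}_{W_p} \in \overline{\langle \{\mathbf{R}(h_u)_\ast\mathcal{O}_{\operatorname{Spec}(\kappa(u))}\}_{u\in \Sigma_p}\rangle},
\end{displaymath}
where $\Sigma_p = |a_p|(|W_p|) \subseteq |s|^{-1}(\{p\})$ by \eqref{eq:sigma-contained}. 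Since $p\notin S$, we get $\Sigma_p \subseteq |U|\setminus |s|^{-1}(S)$. \Cref{lem:supported_coaisle} then puts each generator $\mathbf{R}(h_u)_\ast\mathcal{O}_{\operatorname{Spec}(\kappa(u))}$ in $(s^{-1}\mathcal{T})^\perp$. By \Cref{lem:coaisle_is_localizing} applied on $U$ to the Thomason subset $|s|^{-1}(S)$, the category $(s^{-1}\mathcal{T})^\perp$ is localizing. It therefore contains $\mathbf{R}(a_p)_\ast\mathcal{O}_{W_p}$, which completes the argument.

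The main obstacle is the step showing that $P$ avoids $S$. This relies on the nonvanishing of $\mathbf{R}(t_p)_\ast\mathcal{O}_{\operatorname{Spec}(k_p)}$ together with $\mathcal{T}\cap\mathcal{T}^\perp=0$. A second delicate point is the bookkeeping needed to identify $s^{-1}\mathcal{T}$ on $U$ with the subcategory attached to $|s|^{-1}(S)$, so that \Cref{lem:supported_coaisle} applies. Both are already available from \Cref{lem:aisle_to_aisle} and \Cref{lem:supported_coaisle}.
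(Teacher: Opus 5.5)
Your proposal is correct and follows essentially the same route as the paper. Both arguments present $\mathcal{T}^\perp$ via \Cref{thm:point_generated_classification} by residue-field pushforwards at points outside $S$, rewrite their pullbacks by flat base change as pushforwards from the affine Noetherian fibres $U\times_X\operatorname{Spec}(k)$, and generate these via Neeman by residue fields at points of $|U|\setminus|s|^{-1}(S)$. Both then conclude with \Cref{lem:supported_coaisle} together with the fact that $(s^{-1}\mathcal{T})^\perp$ is localizing.

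Your justification that the generating points avoid $S$ (via $\mathcal{T}\cap\mathcal{T}^\perp=0$ and nonvanishing of $\mathbf{R}(t_p)_\ast\mathcal{O}_{\operatorname{Spec}(k_p)}$) is more transparent than the argument the paper gives for $S^\prime\subseteq|X|\setminus S$.
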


\begin{proof}
    By \Cref{lem:coaisle_is_localizing}, $\mathcal{T}^\perp$ is $\otimes$-localizing. 
    By \Cref{thm:point_generated_classification}, there exists $S^\prime\subseteq |X|$ such that 
    \begin{displaymath}
        \mathcal{T}^\perp = \overline{\langle \{ \mathbf{R}(t_x)_\ast \mathcal{O}_{\operatorname{Spec}(k_x)} \}_{x\in S^\prime}\rangle }.
    \end{displaymath}
    Note that $S^\prime \subseteq |X|\setminus S$. 
    Let $x\in |X|\setminus S$. 
    Choose $u_x\in |U|$ such that $|s|(u_x)=x$. 
    By \Cref{lem:pts_in_inverse_localizing_subcategory},
    $\mathbf{R}(h_u)_\ast \mathcal{O}_{\operatorname{Spec}(\kappa(u))} \in s^{-1} \mathcal{T}$ if, and only if, $u \in |s|^{-1}(S)$.
    If $\mathbf{R}(s\circ h_{u_x})_\ast  \mathcal{O}_{\operatorname{Spec}(\kappa(u_x))}\in \mathcal{T}$, then $u_x\in |s|^{-1}(S)$, which is absurd.
    By \Cref{lem:inverse_image_localizing}, we have that 
    \begin{displaymath}
        \mathbf{L}s^\ast \overline{\langle \{ \mathbf{R}(t_x)_\ast \mathcal{O}_{\operatorname{Spec}(k_x)} \}_{x\in S^\prime}\rangle }
        \subseteq \overline{\langle \{\mathbf{L}s^\ast  \mathbf{R}(t_x)_\ast \mathcal{O}_{\operatorname{Spec}(k_x)} \}_{x\in S^\prime}\rangle }.
    \end{displaymath}
    Consider the fibered square 
    \begin{displaymath}
        \begin{tikzcd}
            {U\times_X \operatorname{Spec}(k_x)} & {\operatorname{Spec}(k_x)} \\
            U & {X.}
            \arrow["{s_x}", from=1-1, to=1-2]
            \arrow["{t^\prime_x}"', from=1-1, to=2-1]
            \arrow["{t_x}", from=1-2, to=2-2]
            \arrow["s"', from=2-1, to=2-2]
        \end{tikzcd}
    \end{displaymath}
    Since $t_x$ is affine, base change implies $t^\prime_x$ is affine, and hence $U\times_X \operatorname{Spec}(k_x)$ is affine. 
    Then $\overline{\langle \mathcal{O}_{U\times_X \operatorname{Spec}(k_x)}\rangle} = D_{\operatorname{qc}}(U\times_X \operatorname{Spec}(k_x))$. 
    Applying \Cref{lem:inverse_image_localizing} and flat base change, it follows that 
    \begin{displaymath}
        \begin{aligned}
            \mathbf{R}(t^\prime_x)_\ast \overline{\langle \mathcal{O}_{U\times_X \operatorname{Spec}(k_x)}\rangle} 
            &\subseteq \overline{\langle \{\mathbf{L}s^\ast  \mathbf{R}(t_x)_\ast \mathcal{O}_{\operatorname{Spec}(k_x)} \}_{x\in S^\prime}\rangle }
            \\&\subseteq \overline{\langle \{\mathbf{R}(t^\prime_x)_\ast \mathcal{O}_{U\times_X \operatorname{Spec}(k_x)} \}_{x\in S^\prime}\rangle }.
        \end{aligned}
    \end{displaymath}
    Denote by $r_v \colon \operatorname{Spec}(\kappa(v))\to U\times_X \operatorname{Spec}(k_x)$ the natural morphism for $v\in |U\times_X \operatorname{Spec}(k_x)|$.
    Base change implies $s_x$ is an \'{e}tale presentation.
    Hence, $U\times_X \operatorname{Spec}(k_x)$ is Noetherian. 
    Then \cite[Theorem 2.8]{Neeman:1992b} and \cite[\href{https://stacks.math.columbia.edu/tag/071Q}{Tag 071Q}]{StacksProject} give
    \begin{displaymath}
        \overline{\langle \{ \mathbf{R}(r_v)_\ast \mathcal{O}_{\operatorname{Spec}(\kappa(v))} \}_{v\in |U\times_X \operatorname{Spec}(k_x)|}\rangle} = D_{\operatorname{qc}}(U\times_X \operatorname{Spec}(k_x)).
    \end{displaymath}
    Thus, 
    \begin{displaymath}
        \overline{\langle \{\mathbf{R}(t^\prime_x)_\ast \mathcal{O}_{U\times_X \operatorname{Spec}(k_x)} \}_{x\in S^\prime}\rangle } 
        = \overline{\langle \{ \{ \mathbf{R}(t^\prime_x \circ r_v)_\ast \mathcal{O}_{\operatorname{Spec}(\kappa(v))} \}_{v\in |U\times_X \operatorname{Spec}(k_x)|} \}_{x\in S^\prime}\rangle } .
    \end{displaymath}
    By \cite[\href{https://stacks.math.columbia.edu/tag/03H4}{Tag 03H4}]{StacksProject}, we see that every point above belongs to $|U|\setminus |s|^{-1}(S)$. 
    Thus, this localizing subcategory is contained in $(s^{-1}\mathcal{T})^\perp$ by \Cref{lem:supported_coaisle}. 
    This completes the proof.
\end{proof}

\begin{lemma}
    $\mathbf{L}s^\ast$ induces a $t$-exact functor $(D_{\operatorname{qc}}(X),\mathcal{T}) \to (D_{\operatorname{qc}}(U),s^{-1}\mathcal{T})$. 
    In particular, for any $E\in D_{\operatorname{qc}}(X)$, the triangle \eqref{eq:Bousfield_triangles} of $\mathbf{L}s^\ast E$ with respect to $|s|^{-1}(S)$ is isomorphic to 
    \begin{displaymath}
        \mathbf{L}s^\ast \Gamma_S (E) \to \mathbf{L}s^\ast E \xrightarrow{\mathbf{L}s^\ast \eta_E} \mathbf{L}s^\ast (L_S \circ \pi)(E) \to  \mathbf{L}s^\ast \Gamma_S (E)[1].
    \end{displaymath}
\end{lemma}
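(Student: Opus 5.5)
Interpret the pair $(D_{\operatorname{qc}}(X),\mathcal{T})$ as the $t$-structure with aisle $\mathcal{T}$ and coaisle $\mathcal{T}^\perp$. Since $\mathcal{T}$ is closed under all shifts, this $t$-structure is the stable one coming from the Bousfield triangle \eqref{eq:Bousfield_triangles}. Similarly, $(D_{\operatorname{qc}}(U),s^{-1}\mathcal{T})$ has aisle $s^{-1}\mathcal{T}$ and coaisle $(s^{-1}\mathcal{T})^\perp$. To see that the latter really is a $t$-structure, note two things. First, $s^{-1}\mathcal{T}=D_{\operatorname{qc},|s|^{-1}(S)}(U)$, as shown inside the proof of \Cref{lem:aisle_to_aisle}. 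Second, $|s|^{-1}(S)$ is Thomason, so \Cref{lem:Bousfield} (applied to the point-generated scheme $U$, via \Cref{prop:ascent_descent_pt_generated}) provides the Bousfield triangle for $\mathbf{L}s^\ast E$.

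\emph{$t$-exactness.} By definition, $t$-exactness means that $\mathbf{L}s^\ast$ sends the aisle into the aisle and the coaisle into the coaisle. The first is exactly \Cref{lem:aisle_to_aisle}, namely $\mathbf{L}s^\ast\mathcal{T}\subseteq s^{-1}\mathcal{T}$. The second is \Cref{lem:coaisle_to_coaisle}, namely $\mathbf{L}s^\ast\mathcal{T}^\perp\subseteq(s^{-1}\mathcal{T})^\perp$. Because the aisles are stable under shifts, no truncation-degree bookkeeping is needed beyond these two inclusions.

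\emph{The triangles.} Fix $E\in D_{\operatorname{qc}}(X)$. Apply the exact functor $\mathbf{L}s^\ast$ to \eqref{eq:Bousfield_triangles}. This gives a distinguished triangle
\[
\mathbf{L}s^\ast\Gamma_S(E)\to\mathbf{L}s^\ast E\to\mathbf{L}s^\ast(L_S\circ\pi)(E)\to\mathbf{L}s^\ast\Gamma_S(E)[1].
\]
By the two inclusions above, its first term lies in $s^{-1}\mathcal{T}$ and its third term lies in $(s^{-1}\mathcal{T})^\perp$.

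Now use the standard uniqueness of such decompositions. If $A\to F\to B\to A[1]$ and $A'\to F\to B'\to A'[1]$ are triangles with $A,A'$ in a localizing subcategory $\mathcal{L}$ and $B,B'\in\mathcal{L}^\perp$, then they are isomorphic via a unique isomorphism restricting to the identity on $F$. To prove this, observe that $\operatorname{Hom}(A,B')=0$ and $\operatorname{Hom}(A[1],B')=0$. Hence the composite $A\to F\to B'$ vanishes, so $A\to F$ factors uniquely through $A'\to F$. By symmetry the two factorizations are mutually inverse, and this extends to an isomorphism of triangles. (Alternatively: the map $A\to F$ is the counit of the right adjoint to the inclusion $\mathcal{L}\hookrightarrow D_{\operatorname{qc}}(U)$, which is unique up to unique isomorphism.) Apply this with $\mathcal{L}=s^{-1}\mathcal{T}$ and $F=\mathbf{L}s^\ast E$ to compare the pulled-back triangle with the Bousfield triangle of $\mathbf{L}s^\ast E$ for $|s|^{-1}(S)$.

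It remains to check that the middle map of the pulled-back triangle is $\mathbf{L}s^\ast\eta_E$. This holds by construction, since we simply applied $\mathbf{L}s^\ast$ to \eqref{eq:Bousfield_triangles}. Under the isomorphism of triangles, this map corresponds to the unit of the localization on $U$.

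The only substantive input is the coaisle inclusion, and it has already been handled in \Cref{lem:coaisle_to_coaisle}. So the main obstacle is bookkeeping: we must be sure that the Bousfield triangle on $U$ is taken with respect to $s^{-1}\mathcal{T}$. This is justified by the identification $s^{-1}\mathcal{T}=D_{\operatorname{qc},|s|^{-1}(S)}(U)$ together with \Cref{lem:supported_Bousfield}, which shows this is the localizing subcategory generated by the points in $|s|^{-1}(S)$.
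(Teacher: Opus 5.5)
Your proposal is correct and takes the same approach as the paper, whose proof simply deduces the statement from \Cref{lem:aisle_to_aisle} and \Cref{lem:coaisle_to_coaisle}. You also spell out what the paper leaves implicit: the identification of $s^{-1}\mathcal{T}$ with the localizing subcategory generated by the points of $|s|^{-1}(S)$, and the standard uniqueness of the decomposition triangle relative to $s^{-1}\mathcal{T}$, which gives the asserted isomorphism of triangles.
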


\begin{proof}
    This follows from \Cref{lem:aisle_to_aisle,lem:coaisle_to_coaisle}.
\end{proof}


\bibliographystyle{alpha}
\bibliography{mainbib}

\end{document}